\newtheorem{theorem}{Theorem}[section]
\newtheorem{lemma}[theorem]{Lemma}
\theoremstyle{definition}
\newtheorem{proposition}[theorem]{Proposition}
\newtheorem{corollary}[theorem]{Corollary}
\newtheorem{remark}[theorem]{Remark}
\numberwithin{equation}{section}
\begin{document}
\title[]{%
Direct and inverse problems for a third-order self-adjoint differential operator with non-local potential functions}
\author{Yixuan Liu and Mingming Zhang}

\begin{abstract}
The direct and inverse problems for a third-order self-adjoint differential operator with non-local potential functions are considered. Firstly, the  multiplicity for eigenvalues of the operator is analyzed, and it is proved that the differential operator has simple eigenvalues, except for finitely many eigenvalues of multiplicity two or three. Then the expressions of eigenfunctions and resolvent are obtained. Finally, the inverse problem for recovering non-local potential functions is solved.\\
\textbf{Mathematics Subject Classification (2010)}:  34L10, 34L15, 34A55.\\
$\mathbf{Keywords:}$ Direct problems; Inverse problems; Third-order self-adjoint differential operator; Non-local potential functions.
\end{abstract}

\maketitle



\section{Introduction}

The paper investigates the direct and inverse problems for the third-order self-adjoint differential operator $L_{\alpha}$ with non-local potential functions in $L_{\mathbb{C}}^2(0,1)$ defined by
\begin{eqnarray*}
&&\left(L_{\alpha}y\right)(x):=\mathrm{i}y^{\prime\prime\prime}(x)+\alpha\int_{0}^{1}y(t)\overline{v}(t)\mathrm{d}tv(x), \quad\alpha\in\mathbb{R}, \quad v\in L_{\mathbb{c}}^{2}(0,1),
\end{eqnarray*}
which domain $\mathcal{D}(L_{\alpha})$ is formed by the functions $y\in W_{3}^{2}(0,1)$ satisfying the boundary conditions
\begin{equation}
y^{\prime\prime}(0)=0, \quad y^{\prime}(1)=y^{\prime}(0), \quad y^{\prime\prime}(1)=0.\label{fgh}
\end{equation}
We use $L_{0}$ to denote the self-adjoint ordinary differential operator
\begin{equation*}
\left(L_{0}y\right)(x):=\mathrm{i}y^{\prime\prime\prime}(x) \label{L0y}
\end{equation*}
acting on the space $L^{2}_{\mathbb{C}}[0,1]$, which domain is
\begin{equation}
\mathcal{D}\left({L}_{0}\right)=\left\{y\in W_{3}^2(0,1)\left \vert
y^{\prime\prime}(0)=0, y^{\prime}(1)=y^{\prime}(0), y^{\prime\prime}(1)=0
\right.\right\}.\label{D0}
\end{equation}
And the operator $L_{\alpha}$ can be considered as a one-dimensional perturbation of the operator $L_{0}$.

In recent years, the study of third-order differential operators has attracted extensive attention [1--8]. The third-order differential operator emerges in the construction of the Lax pair, which is used to solving nonlinear Boussinesq and Degasperis--Procesi equations. These two nonlinear equations are used to study the long waves in shallow water and vibrations of a cubic string (see [9--11] and the references therein). In addition, non-local potential functions are widely applied in the field of mathematical physics [6--7, 12--15].

Zolotarev [6] and Liu [7] explored the spectrum of the third-order self-adjoint differential operator with non-local potential functions and coupled boundary conditions on a finite interval, proving that the spectrum of the differential operator consists of simple eigenvalues and finitely many eigenvalues of multiplicity $2$.
Moreover, for a given self-adjoint operator with simple discrete spectrum, Dobosevych [16] considered its rank-one perturbations, pointing out that the multiplicity for eigenvalues of the perturbed operator is at most $2$.
However, in this paper, the spectrum of the operator $L_{\alpha}$ is more complex, as it not only includes simple eigenvalues and finitely many eigenvalues of multiplicity $2$, but may also include an eigenvalue of multiplicity $3$.
The potential reason for the occurrence of an eigenvalue of multiplicity $3$ is that the operator $L_{0}$ has an eigenvalue of multiplicity $2$ (see Lemma \ref{asymptotic}).
Furthermore, the boundary conditions (\ref{fgh}) can be regarded as a special case of the boundary conditions
\begin{equation}
\label{beta}
\cos\gamma y(0)-\mathrm{i}\sin \gamma y^{\prime\prime}(0)=0, \quad y^{\prime}(1)=\mathrm{e}^{\mathrm{i}\phi}y^{\prime}(0), \quad \cos\beta y(1)-\mathrm{i}\sin \beta y^{\prime\prime}(1)=0 
\end{equation}
in the appendix under the condition of $\cos^2\gamma+\cos^2\beta+(\mathrm{e}^{\mathrm{i}\phi}-1)^2=0$ (the boundary conditions (\ref{beta}) are referred to as the general separated boundary conditions of the third-order self-adjoint operator in Zolotarev [8]).
If condition $\cos^2\gamma+\cos^2\beta+(\mathrm{e}^{\mathrm{i}\phi}-1)^2\neq0$ is met, the direct and inverse problems for the third-order differential operator with non-local potential and boundary conditions are similar to [6-7, 15].

The structure of this paper is as follows. 
Section $2$ gives some useful symbols and properties. 
In section $3$, we first derive the expression for the characteristic function of the operator $L_{0}$ with the boundary conditions (\ref{D0}), and then use this expression to conduct the asymptotic estimates for eigenvalues of operator $L_{0}$ (see Lemma \ref{asymptotic}). 
The eigenvalues, eigenfunctions, and resolvent of the operator ${L}_{\alpha}$ are investigated in Section $4$,
and we deduce that the spectrum of the differential operator ${L}_{\alpha}$ consists of simple eigenvalues and  finitely many eigenvalues of multiplicity two or three (see Lemma \ref{spectrum} and Remark \ref{finite}). 
In section $5$, we solve the inverse problem, and Theorem \ref{four spectra} indicates that the non-local potential function of the operator  ${L}_{\alpha}$ can be reconstructed by four spectra. Section $6$ discusses the multiplicity for eigenvalues of the third-order differential operator with the boundary conditions (\ref{beta}) .

\section{Preliminaries}
For $g,h\in L_{\mathbb{C}}^2(0,1)$, we define  $\left<g,h\right>=\int_{0}^{1}g(x)\overline{h}(x)\mathrm{d}x$ as
the inner product and $||g||=\left<g,g\right>^\frac{1}{2}$ as the norm.
\begin{proposition}
\label{2.1}
{\rm \cite[Proposition 2.1] {LY2024}}
Denote $\omega=\mathrm{e}^{\frac{2\pi}{3}\mathrm{i}}=-\frac{1}{2}+\frac{\sqrt 3}{2}\mathrm{i}$, the following equalities hold:
\begin{eqnarray*}
&&\omega^2=-\frac{1}{2}-\frac{\sqrt 3}{2}\mathrm{i}=\overline{\omega},\quad \omega^{3}=1, \quad 1+\omega+\omega^{2}=0,\\\notag
&&1-\omega=\mathrm{i}\sqrt 3 \omega^{2},\quad\omega-\omega^{2}=\mathrm{i}\sqrt 3,\quad\omega^{2}-1=\mathrm{i}\sqrt 3 \omega.\label{omega}
\end{eqnarray*}
\end{proposition}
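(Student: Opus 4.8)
The plan is to treat all six identities as direct consequences of the polar (Euler) representation of $\omega$, together with the fact that $\omega$ is a primitive cube root of unity. First I would recall Euler's formula to write $\omega=\mathrm{e}^{\frac{2\pi}{3}\mathrm{i}}=\cos\frac{2\pi}{3}+\mathrm{i}\sin\frac{2\pi}{3}$, which upon inserting the two standard trigonometric values immediately yields the rectangular form $\omega=-\frac{1}{2}+\frac{\sqrt3}{2}\mathrm{i}$ asserted in the statement.

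Next I would square this to obtain $\omega^{2}=\mathrm{e}^{\frac{4\pi}{3}\mathrm{i}}=-\frac{1}{2}-\frac{\sqrt3}{2}\mathrm{i}$; since $|\omega|=1$ forces $\omega^{2}=\mathrm{e}^{-\frac{2\pi}{3}\mathrm{i}}=\overline{\omega}$, this settles the first line. The identity $\omega^{3}=1$ is then immediate, either as $\mathrm{e}^{2\pi\mathrm{i}}$ or from $\omega^{3}=\omega\cdot\omega^{2}=\omega\overline{\omega}=|\omega|^{2}=1$. For $1+\omega+\omega^{2}=0$ I would invoke the factorization $z^{3}-1=(z-1)(z^{2}+z+1)$: since $\omega$ is a root of $z^{3}-1$ with $\omega\neq1$, it must be a root of $z^{2}+z+1$, giving the claim (equivalently, one simply adds $1$ to the two conjugate rectangular forms, whereupon the imaginary parts cancel and the real parts sum to zero).

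Finally, the three remaining identities $1-\omega=\mathrm{i}\sqrt3\,\omega^{2}$, $\omega-\omega^{2}=\mathrm{i}\sqrt3$, and $\omega^{2}-1=\mathrm{i}\sqrt3\,\omega$ I would verify by substituting the rectangular forms just computed and simplifying each side; for instance $\omega-\omega^{2}=\mathrm{i}\sqrt3$ is immediate because the real parts cancel while the imaginary parts add, and for the other two one multiplies $\mathrm{i}\sqrt3$ through the rectangular form of $\omega^{2}$ (respectively $\omega$) and checks the result against the left-hand side. These three relations are the only ones requiring a short arithmetic check rather than an outright reading-off, so they are the closest thing to an obstacle here — and even they amount to a single line of computation once the value of $\omega^{2}$ is in hand. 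No genuine difficulty arises: the content of the proposition is purely computational and serves only to fix the algebraic relations among the cube roots of unity that are used repeatedly in the sequel.
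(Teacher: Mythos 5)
Your verification is correct and complete: every identity follows by the direct computation you describe, and the paper itself offers no proof beyond citing [LY2024], so the elementary rectangular-form check is exactly what is intended. Nothing further is needed.
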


Let
\begin{equation}
c(z)=\frac{1}{3}\sum_{n=1}^{3}\mathrm{e}^{\omega^{n}z},\quad
s(z)=\frac{1}{3}\sum_{n=1}^{3}\frac{1}{\omega^{n}}\mathrm{e}^{\omega^{n}z},\quad
d(z)=\frac{1}{3}\sum_{n=1}^{3}\omega^{n}\mathrm{e}^{\omega^{n}z} \label{csd}
\end{equation}
denote the fundamental solutions of equation $y^{\prime\prime\prime}=y$ determined by the initial conditions
\begin{equation*}
\left(
\begin{array}{ccc}
c(0) & s(0) & d(0) \\
c^{\prime }(0) & s^{\prime }(0) &d^{\prime }(0) \\
 c^{\prime \prime}(0) &s^{\prime \prime}(0) & d^{\prime \prime}(0)%
\end{array}%
\right) =I_{3}:=\left(
\begin{array}{ccc}
1 & 0 & 0 \\
0 & 1 & 0 \\
0 & 0 & 1%
\end{array}%
\right).
\end{equation*}

\begin{lemma}
{\rm \cite[lemma 1.1] {ZVA2021}}
\label{entire}
The functions $c(z)$, $s(z)$ and $d(z)$ in {\rm (\ref{csd})} are entire functions of exponential type, and satisfy the following relations:

\rm{(1)} $s^\prime(z)=c(z)$, $d^\prime(z)=s(z)$, $c^\prime(z)=d(z)$;

\rm{(2)} $\overline{c(z)}=c(\overline{z})$, $\overline{s(z)}=s(\overline{z})$, $\overline{d(z)}=d(\overline{z})$;

\rm{(3)} $c(\omega z)=c(z)$, $s(\omega z)=\omega s(z)$, $d(\omega z)=\omega^{2} d(z)$;

{\rm (4)} Euler's formula
\begin{equation*}
\mathrm{e}^{\omega^{n}z}=c(z)+\omega^{n}s(z)+\frac{1}{\omega^{n}}d(z),\quad n=1,2,3;
\end{equation*}

\rm{(5)} the main identity
\begin{equation*}
c^{3}(z)+s^{3}(z)+d^{3}(z)-3c(z)s(z)d(z)=1;
\end{equation*}

\rm{(6)} the summation formulas
\begin{eqnarray*}
&&c(z_{1}+z_{2})=c(z_{1})c(z_{2})+s(z_{1})d(z_{2})+d(z_{1})s(z_{2}),\\
&&s(z_{1}+z_{2})=c(z_{1})s(z_{2})+s(z_{1})c(z_{2})+d(z_{1})d(z_{2}),\\
&&d(z_{1}+z_{2})=c(z_{1})d(z_{2})+s(z_{1})s(z_{2})+d(z_{1})c(z_{2});
\end{eqnarray*}

\rm{(7)}
\begin{eqnarray*}
&&3c^{2}(z)=c(2z)+2c(-z),\\
&&3s^{2}(z)=d(2z)+2d(-z),\\
&&3d^{2}(z)=s(2z)+2s(-z);
\end{eqnarray*}

\rm{(8)}
\begin{eqnarray*}
&&c^{2}(z)-s(z)d(z)=c(-z),\\
&&d^{2}(z)-s(z)c(z)=s(-z),\\
&&s^{2}(z)-d(z)c(z)=d(-z);
\end{eqnarray*}

\rm{(9)} Taylor's formulas
\begin{eqnarray*}
&&c(z)=\sum_{n=0}^{\infty}\frac{z^{3n}}{(3n)!}=1+\frac{z^{3}}{3!}+\frac{z^{6}}{6!}+\cdots,\\
&&s(z)=\sum_{n=0}^{\infty}\frac{z^{3n+1}}{(3n+1)!}=z+\frac{z^{4}}{4!}+\frac{z^{7}}{7!}+\cdots,\\
&&d(z)=\sum_{n=0}^{\infty}\frac{z^{3n+2}}{(3n+2)!}={\frac{z^{2}}{2!}+\frac{z^{5}}{5!}+\frac{z^{8}}{8!}+\cdots}.
\end{eqnarray*}
\end{lemma}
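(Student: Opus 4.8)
The plan is to deduce all nine items from two structural facts: the \emph{root-of-unity filter} $\frac{1}{3}\sum_{n=1}^{3}\omega^{kn}=1$ if $3\mid k$ and $0$ otherwise (a consequence of $1+\omega+\omega^{2}=0$ in Proposition \ref{2.1}), and the observation that $(c,s,d)$ is the first column of a one-parameter matrix group. First I would establish (1) by differentiating the defining series of {\rm (\ref{csd})} termwise: $c^{\prime}(z)=\frac{1}{3}\sum_{n}\omega^{n}\mathrm{e}^{\omega^{n}z}=d(z)$, and, using $\omega^{3}=1$ to reindex, $d^{\prime}(z)=\frac{1}{3}\sum_{n}\omega^{2n}\mathrm{e}^{\omega^{n}z}=s(z)$ and $s^{\prime}(z)=\frac{1}{3}\sum_{n}\mathrm{e}^{\omega^{n}z}=c(z)$. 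Hence the vector $u(z)=(c(z),s(z),d(z))^{\top}$ solves $u^{\prime}=Au$, $u(0)=(1,0,0)^{\top}$, with $A=\left(\begin{smallmatrix}0&0&1\\1&0&0\\0&1&0\end{smallmatrix}\right)$, so $u(z)$ is the first column of the circulant matrix $M(z):=\mathrm{e}^{Az}$.

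For (9) I would interchange the finite and infinite sums and apply the filter: in $c(z)=\sum_{k\ge0}\frac{z^{k}}{k!}\cdot\frac{1}{3}\sum_{n}\omega^{nk}$ only the terms with $3\mid k$ survive, giving $c(z)=\sum_{m}\frac{z^{3m}}{(3m)!}$; the weights $\omega^{-n}$ and $\omega^{n}$ in $s$ and $d$ shift the surviving residue class to $k\equiv1$ and $k\equiv2$, producing the other two series. Because each function is a finite combination of $\mathrm{e}^{\omega^{n}z}$ with $|\omega^{n}|=1$, they are entire of exponential type, and since the series just obtained have real coefficients, (2) follows at once. For (4) I would compute directly $c(z)+\omega^{n}s(z)+\omega^{-n}d(z)=\frac{1}{3}\sum_{m}\mathrm{e}^{\omega^{m}z}\bigl(1+\omega^{n-m}+\omega^{m-n}\bigr)$, where the bracket equals $3$ when $m\equiv n$ and $0$ otherwise, leaving $\mathrm{e}^{\omega^{n}z}$. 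Property (3) is immediate from reindexing, since $z\mapsto\omega z$ sends $\omega^{n}z\mapsto\omega^{n+1}z$: this fixes $c$ and multiplies $s,d$ by $\omega,\omega^{2}$ through their weights.

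The conceptual core is the main identity (5). I would invoke the classical factorization $X^{3}+Y^{3}+Z^{3}-3XYZ=\prod_{j=0}^{2}\bigl(X+\omega^{j}Y+\omega^{2j}Z\bigr)$ with $X=c(z)$, $Y=s(z)$, $Z=d(z)$; by (4) each factor is exactly $\mathrm{e}^{\omega^{j}z}$, so the product telescopes to $\mathrm{e}^{(1+\omega+\omega^{2})z}=\mathrm{e}^{0}=1$. Equivalently, the left-hand side is $\det M(z)=\det\mathrm{e}^{Az}=\mathrm{e}^{z\,\mathrm{tr}\,A}=1$ since $\mathrm{tr}\,A=0$.

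Finally I would read off (6) and (8) from the group law of $M$. The addition formulas (6) are precisely the entries of the first row of $M(z_{1}+z_{2})=M(z_{1})M(z_{2})$ (equivalently, expand $\mathrm{e}^{\omega^{n}(z_{1}+z_{2})}=\mathrm{e}^{\omega^{n}z_{1}}\mathrm{e}^{\omega^{n}z_{2}}$ via (4) and apply the filter). Because $\det M(z)=1$, the inverse equals the adjugate and $M(z)^{-1}=M(-z)$; matching the $2\times2$ cofactors of $M(z)$ against the entries of $M(-z)$ yields (8), e.g.\ $c^{2}(z)-s(z)d(z)=c(-z)$. Then (7) follows by combining (6) at $z_{1}=z_{2}=z$ with (8): for instance $c(2z)+2c(-z)=(c^{2}+2sd)+2(c^{2}-sd)=3c^{2}$. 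The only genuine obstacle is bookkeeping---keeping the residues of the exponents modulo $3$ and the $\omega^{\pm n}$ weights consistent through (4), (6) and (9); once the filter and the identity $M(z_{1}+z_{2})=M(z_{1})M(z_{2})$ are in hand, every remaining item reduces to a short computation.
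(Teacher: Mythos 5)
Your proposal is correct and complete. Note, though, that the paper does not actually supply a proof to compare against: its entire argument is the single sentence ``From Proposition \ref{2.1}, this lemma can be proved,'' together with a citation to Zolotarev's Lemma 1.1, so you are filling a deliberate gap rather than paralleling an existing derivation. Your root-of-unity filter $\frac{1}{3}\sum_{n=1}^{3}\omega^{nk}=\delta_{3\mid k}$ is precisely the consequence of $1+\omega+\omega^{2}=0$ that the paper's one-liner gestures at, and it cleanly handles (3), (4), (9) and hence (2). What you add beyond that is the identification of $(c,s,d)$ with the first column of the circulant $M(z)=\mathrm{e}^{Az}=c(z)I+s(z)A+d(z)A^{2}$: the group law $M(z_{1}+z_{2})=M(z_{1})M(z_{2})$ gives (6), $\det M(z)=\mathrm{e}^{z\,\mathrm{tr}A}=1$ gives (5) (equivalently via the classical factorization of $X^{3}+Y^{3}+Z^{3}-3XYZ$ combined with (4)), the adjugate computation $M(z)^{-1}=M(-z)$ gives (8), and (7) follows by combining (6) and (8) at $z_{1}=z_{2}=z$ --- I checked each of these and the cofactor bookkeeping is right. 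This unifying viewpoint is not in the paper and makes the otherwise ad hoc identities (5)--(8) consequences of one structural fact, at the modest cost of introducing the matrix exponential where a direct expansion of $\mathrm{e}^{\omega^{n}(z_{1}+z_{2})}=\mathrm{e}^{\omega^{n}z_{1}}\mathrm{e}^{\omega^{n}z_{2}}$ followed by the filter would also do.
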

\begin{proof}
From Proposition \ref{omega}, this lemma can be proved.
\end{proof}

Based on the expressions (\ref{csd}) of the functions $c(z)$, $s(z)$ and $d(z)$, we denote
\begin{equation}
y_{1}(x,\lambda)=c\left(\mathrm{i} \sqrt[3]{\lambda} x\right),\\
\quad y_{2}(x,\lambda)=\frac{1}{\mathrm{i} \sqrt[3]{\lambda}}s\left(\mathrm{i} \sqrt[3]{\lambda} x\right),\\
\quad y_{3}(x,\lambda)=\frac{1}{\left(\mathrm{i} \sqrt[3]{\lambda}\right)^2}d\left(\mathrm{i} \sqrt[3]{\lambda} x\right),\label{yul}
\end{equation}
and the Fourier transforms for $v\in L_{\mathbb{C}}^{2}(0,1)$ are
\begin{eqnarray}
&&\widetilde{y}_{1}(\lambda):=\left<v(x),y_{1}\left(x,\overline{\lambda}\right)\right>=\frac{1}{3}\sum_{n=1}^{3}\widetilde{\psi}_{n}(\lambda),\notag\\
&&\widetilde{y}_{2}(\lambda):=\left<v(x),y_{2}\left(x,\overline{\lambda}\right)\right>=\frac{1}{3\left(\mathrm{i} \sqrt[3]{\lambda}\right)}\sum_{n=1}^{3}\frac{1}{\omega^{n}}\widetilde{\psi}_{n}(\lambda),\label{v}\\
&&\widetilde{y}_{3}(\lambda):=\left<v(x),y_{3}\left(x,\overline{\lambda}\right)\right>=\frac{1}{3\left(\mathrm{i} \sqrt[3]{\lambda}\right)^2}\sum_{n=1}^{3}\omega^{n}\widetilde{\psi}_{n}(\lambda),\notag
\end{eqnarray}
besides,
\begin{equation*}
\widetilde{\psi}_{n}(\lambda):=\int_{0}^{1}\mathrm{e}^{-\mathrm{i}\omega^{n}\sqrt[3]{\lambda} x}v(x)\mathrm{d} x,\quad n=1,2,3.
\end{equation*}
Let
\begin{equation*}
f^{*}(\lambda)=\overline{f\left(\overline{\lambda}\right)}
\end{equation*}
denote the operation of involution, then
\begin{eqnarray}
&&\widetilde{y}_{1}^{*}(\lambda):=\left<y_{1}(x,\lambda),v(x)\right>=\frac{1}{3}\sum_{n=1}^{3}\widetilde{\psi}_{n}^{*}(\lambda),\notag\\
&&\widetilde{y}_{2}^{*}(\lambda):=\left<y_{2}(x,\lambda),v(x)\right>=\frac{1}{3\left(\mathrm{i} \sqrt[3]{\lambda}\right)}\sum_{n=1}^{3}\omega^{n}\widetilde{\psi}_{n}^{*}(\lambda),\label{vstar}\\
&&\widetilde{y}_{3}^{*}(\lambda):=\left<y_{3}(x,\lambda),v(x)\right>=\frac{1}{3\left(\mathrm{i} \sqrt[3]{\lambda}\right)^2}\sum_{n=1}^{3}\frac{1}{\omega^{n}}\widetilde{\psi}_{n}^{*}(\lambda),\notag
\end{eqnarray}
where,
\begin{equation*}
\widetilde{\psi}_{n}^{*}(\lambda):=\int_{0}^{1}\mathrm{e}^{\mathrm{i}\frac{1}{\omega^{n}}\sqrt[3]{\lambda} x}\overline{v}(x)\mathrm{d} x,\quad n=1,2,3.
\end{equation*}

\begin{remark}
For $n=1,2,3$,  $\widetilde{\psi}_{n}(\lambda)$ and $\widetilde{\psi}_{n}^{*}(\lambda)$ are entire functions of exponential type, and hence (\ref{v}), (\ref{vstar}) are entire functions of exponential type.
\end{remark}
\begin{lemma}
\label{wcsd}
For $v\in L_{\mathbb{C}}^{2}(0,1)$, the following equalities hold:
\begin{eqnarray*}
&&y_{1}(1,\lambda)\widetilde{y}_{3}(\lambda)+y_{2}(1,\lambda)\widetilde{y}_{2}(\lambda)+y_{3}(1,\lambda)\widetilde{y}_{1}(\lambda)
=\widetilde{\nu}_{3}(-\lambda),\\
&&y_{1}(1,\lambda)\widetilde{y}_{2}(\lambda)+y_{2}(1,\lambda)\widetilde{y}_{1}(\lambda)-{\mathrm{i}\lambda}y_{3}(1,\lambda)\widetilde{y}_{3}(\lambda)
=\widetilde{\nu}_{2}(-\lambda),\\
&&y_{1}(1,\lambda)\widetilde{y}_{1}(\lambda)-{\mathrm{i}\lambda}y_{2}(1,\lambda)\widetilde{y}_{3}(\lambda)-{\mathrm{i}\lambda}y_{3}(1,\lambda)\widetilde{y}_{2}(\lambda)
=\widetilde{\nu}_{1}(-\lambda),
\end{eqnarray*}
where
\begin{eqnarray*}
&&\widetilde{\nu}_{1}(\lambda)=\left<\nu(x),y_{1}\left(x,\overline{\lambda}\right)\right>, \\
&&\widetilde{\nu}_{2}(\lambda)=\left<\nu(x),y_{2}\left(x,\overline{\lambda}\right)\right>, \\
&&\widetilde{\nu}_{3}(\lambda)=\left<\nu(x),y_{3}\left(x,\overline{\lambda}\right)\right>
\end{eqnarray*}
are the Fourier transforms for $\nu(x):=v(1-x)$.
\end{lemma}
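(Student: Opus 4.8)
The plan is to expand both sides into sums over $n=1,2,3$ of the basic transforms $\widetilde{\psi}_{n}(\lambda)$ and then to match the two sides term by term using Euler's formula (Lemma \ref{entire}(4)). Throughout I abbreviate $\mu=\mathrm{i}\sqrt[3]{\lambda}$, so that by (\ref{yul}) the boundary values are $y_{1}(1,\lambda)=c(\mu)$, $y_{2}(1,\lambda)=\mu^{-1}s(\mu)$, $y_{3}(1,\lambda)=\mu^{-2}d(\mu)$, and $-\mathrm{i}\lambda=\mu^{3}$; moreover $-\mathrm{i}\omega^{n}\sqrt[3]{\lambda}=-\omega^{n}\mu$, so $\widetilde{\psi}_{n}(\lambda)=\int_{0}^{1}\mathrm{e}^{-\omega^{n}\mu x}v(x)\,\mathrm{d}x$. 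Substituting (\ref{v}) and these boundary values, the left-hand side of each identity becomes $\tfrac{1}{3}\sum_{n}\widetilde{\psi}_{n}(\lambda)$ multiplied inside the sum by a combination of $c(\mu),s(\mu),d(\mu)$ with $\omega$-coefficients, carrying a common power of $\mu$.

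For the right-hand side I would start from the definitions $\widetilde{\nu}_{j}(-\lambda)=\langle\nu,y_{j}(\cdot,-\overline{\lambda})\rangle$ and conjugate the kernels by Lemma \ref{entire}(2), writing $\mu'=\mathrm{i}\sqrt[3]{-\lambda}$; this expresses $\widetilde{\nu}_{j}(-\lambda)$ as a $\tfrac{1}{3}$-weighted sum of $\int_{0}^{1}\mathrm{e}^{-\omega^{n}\mu' x}\nu(x)\,\mathrm{d}x$, with the $y_{2}$-term additionally carrying the factor produced by conjugating $1/(\mathrm{i}\sqrt[3]{-\lambda})$. Using $\nu(x)=v(1-x)$ and the change of variables $u=1-x$, each such integral becomes $\mathrm{e}^{-\omega^{n}\mu'}\int_{0}^{1}\mathrm{e}^{\omega^{n}\mu' u}v(u)\,\mathrm{d}u$. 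Fixing the branch so that $\mu'=-\mu$ turns $\int_{0}^{1}\mathrm{e}^{\omega^{n}\mu' u}v(u)\,\mathrm{d}u$ into $\widetilde{\psi}_{n}(\lambda)$ and the pulled-out prefactor into $\mathrm{e}^{\omega^{n}\mu}$, so each $\widetilde{\nu}_{j}(-\lambda)$ is converted into a weighted sum of $\mathrm{e}^{\omega^{n}\mu}\widetilde{\psi}_{n}(\lambda)$.

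It then remains to compare the two sides coefficient by coefficient in $\widetilde{\psi}_{n}(\lambda)$. For the first identity the weight is $\omega^{n}$, and the required termwise equality $\omega^{n}c(\mu)+\omega^{-n}s(\mu)+d(\mu)=\omega^{n}\mathrm{e}^{\omega^{n}\mu}$ is exactly $\omega^{n}$ times Euler's formula $\mathrm{e}^{\omega^{n}\mu}=c(\mu)+\omega^{n}s(\mu)+\omega^{-n}d(\mu)$, after simplifying with $\omega^{-2n}=\omega^{n}$ from Proposition \ref{2.1}. The second and third identities work the same way, with weights $\omega^{-n}$ and $1$; the factors $-\mathrm{i}\lambda=\mu^{3}$ in their last terms exactly cancel the surplus powers of $\mu$ coming from the normalizations in (\ref{yul}), and both again collapse to Euler's formula. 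Equivalently, one may avoid the exponential expansion and instead apply the summation formulas of Lemma \ref{entire}(6) to $c,s,d$ evaluated at $\mu' u-\mu'$, which produces the boundary values $y_{j}(1,\lambda)$ directly as the factors at $-\mu'$.

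The main obstacle is the sign and branch bookkeeping rather than any analytic difficulty. Two points need care and must be kept mutually consistent: the conjugation via Lemma \ref{entire}(2) of the normalizing factors $1/(\mathrm{i}\sqrt[3]{\lambda})$ and $1/(\mathrm{i}\sqrt[3]{\lambda})^{2}$ in (\ref{yul}), which governs the signs attached to the $y_{2}$- and $\widetilde{y}_{2}$-type contributions, and the choice of cube-root branch relating $\mu'=\mathrm{i}\sqrt[3]{-\lambda}$ to $\mu=\mathrm{i}\sqrt[3]{\lambda}$. Once $\mu'=-\mu$ is adopted consistently on both sides, the conjugation sign in the $y_{2}$-term combines with the branch sign so that all surviving signs match, the surplus $\mu$-powers cancel through $-\mathrm{i}\lambda=\mu^{3}$, and every identity reduces to Euler's formula.
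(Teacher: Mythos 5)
Your proposal is correct and follows essentially the same route as the paper: the paper's one-line proof applies the summation formulas of Lemma \ref{entire}(6) to expand the kernels $y_{3}(1-t,\lambda)$, $y_{2}(1-t,\lambda)$, $y_{1}(1-t,\lambda)$ into exactly the combinations on the left-hand sides and then substitutes $x=1-t$, which is your termwise Euler-formula computation packaged one level higher --- a variant you name yourself at the end. Your explicit attention to the branch choice $\mu'=-\mu$ and to the conjugation of the normalizing factors $1/(\mathrm{i}\sqrt[3]{\lambda})$ and $1/(\mathrm{i}\sqrt[3]{\lambda})^{2}$ is well placed, since that is precisely where the signs of the $\widetilde{y}_{2}$- and $\widetilde{\nu}_{2}$-type terms are decided and where the two sign contributions must (and do) cancel.
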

\begin{proof}
From the functions $y_{1}(x,\lambda)$, $y_{2}(x,\lambda)$, $y_{3}(x,\lambda)$ in (\ref{yul}), the definitions of $\widetilde{y}_{1}(\lambda)$, $\widetilde{y}_{2}(\lambda)$, $\widetilde{y}_{3}(\lambda)$ in (\ref{v}) and the relations (2), (6) in Lemma \ref{entire}, we see
\begin{eqnarray*}
y_{1}(1,\lambda)\widetilde{y}_{3}(\lambda)+y_{2}(1,\lambda)\widetilde{y}_{2}(\lambda)+y_{3}(1,\lambda)\widetilde{y}_{1}(\lambda)
\!\!&=&\!\!\int_{0}^{1}y_{3}(1-t,\lambda)v(t)\mathrm{d}t\\
\!\!&=&\!\!\int_{0}^{1}y_{3}(x,\lambda)v(1-x)\mathrm{d}x\\
\!\!&=&\!\!\widetilde{\nu}_{3}(-\lambda).
\end{eqnarray*}
We can prove the last two equations in the same way.
\end{proof}

Let
\begin{equation}
m(\lambda):=\left<\int_{0}^{x}y_{3}(x-t,\lambda)v(t)\mathrm{d}t,v(x)\right>=\frac{1}{3\left(\mathrm{i} \sqrt[3]{\lambda}\right)^2}\sum_{n=1}^{3}\omega^{n}\eta_{n}(\lambda),\label{mlambda}
\end{equation}
where
\begin{equation*}
\eta_{n}(\lambda):=\int_{0}^{1}\mathrm{d}x\int_{0}^{x}\mathrm{e}^{\mathrm{i}\omega^{n}\sqrt[3]{\lambda} (x-t)}\overline{v}(x){v}(t)\mathrm{d}t,\quad n=1,2,3
\end{equation*}
are the Fourier transforms of the convolution.

\begin{lemma}
\label{mmstar}
The function $m(\lambda)$ {\rm (\ref{mlambda})} is an entire function of exponential type and satisfies
\begin{equation}
m(\lambda)+m^{*}(\lambda)=\widetilde{y}_{3}(\lambda)\widetilde{y}_{1}^{*}(\lambda)+\widetilde{y}_{2}(\lambda)\widetilde{y}_{2}^{*}(\lambda)+\widetilde{y}_{1}(\lambda)\widetilde{y}_{3}^{*}(\lambda).
\end{equation}
\end{lemma}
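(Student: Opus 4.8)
The plan is to pass from the inner-product definition (\ref{mlambda}) of $m$ to the double-integral form
\[
m(\lambda)=\int_{0}^{1}\!\!\int_{0}^{x}y_{3}(x-t,\lambda)\,v(t)\,\overline{v}(x)\,\mathrm{d}t\,\mathrm{d}x ,
\]
an integral over the triangle $\{0\le t\le x\le 1\}$, and then to show that both $m+m^{*}$ and the right-hand side of the claimed identity equal one and the same integral over the full square $[0,1]^{2}$.

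For the regularity assertion I would note that, by the Taylor expansion (9) in Lemma \ref{entire}, $y_{3}(s,\lambda)=(\mathrm{i}\sqrt[3]{\lambda})^{-2}d(\mathrm{i}\sqrt[3]{\lambda}\,s)=\sum_{k\ge 0}(-\mathrm{i}\lambda)^{k}s^{3k+2}/(3k+2)!$ is an entire function of $\lambda$ (the apparent singularity at $\lambda=0$ is removable, since $d$ vanishes to second order), and is of exponential type uniformly for $s\in[0,1]$. Integrating over the bounded triangle preserves both properties, so $m$ is entire of exponential type; this is consistent with the representation $m=\frac{1}{3(\mathrm{i}\sqrt[3]{\lambda})^{2}}\sum_{n}\omega^{n}\eta_{n}$ in (\ref{mlambda}), where the numerator $\sum_{n}\omega^{n}\eta_{n}(\lambda)$ vanishes at $\lambda=0$ because $1+\omega+\omega^{2}=0$, killing the prospective pole.

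For the identity I would compute $m^{*}(\lambda)=\overline{m(\overline{\lambda})}$ using the conjugation relations (2) of Lemma \ref{entire} together with $\overline{\mathrm{i}\sqrt[3]{\overline{\lambda}}}=-\mathrm{i}\sqrt[3]{\lambda}$. This turns the kernel $y_{3}(x-t,\overline{\lambda})$ into $(\mathrm{i}\sqrt[3]{\lambda})^{-2}d\!\left(-\mathrm{i}\sqrt[3]{\lambda}(x-t)\right)$ and interchanges the roles of $v$ and $\overline{v}$, so that $m^{*}$ becomes the integral of $y_{3}(a-b,\lambda)\,v(b)\,\overline{v}(a)$ over the complementary triangle $\{0\le a\le b\le 1\}$. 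Adding, the two triangles fill the square and
\[
m(\lambda)+m^{*}(\lambda)=\int_{0}^{1}\!\!\int_{0}^{1}y_{3}(a-b,\lambda)\,v(b)\,\overline{v}(a)\,\mathrm{d}b\,\mathrm{d}a .
\]
It then remains to apply the summation formula (6) of Lemma \ref{entire} to $y_{3}(a-b,\lambda)$, which splits the kernel into a sum of three products separating the $a$- and $b$-dependence; integrating term by term over the square and invoking the definitions (\ref{v}), (\ref{vstar}) factorises the integral as $\widetilde{y}_{3}\widetilde{y}_{1}^{*}+\widetilde{y}_{2}\widetilde{y}_{2}^{*}+\widetilde{y}_{1}\widetilde{y}_{3}^{*}$. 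This last manipulation is exactly Lemma \ref{wcsd} read in reverse, so its computation can be borrowed verbatim.

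The step I expect to be delicate is the conjugation producing $m^{*}$: one must keep careful track of the factor $\mathrm{i}\sqrt[3]{\lambda}$ and of the signs it generates, so that the middle product $\widetilde{y}_{2}\widetilde{y}_{2}^{*}$ enters with the sign fixed by the normalisation of $\widetilde{y}_{2}$ in (\ref{v}), and one must check that the relabelling genuinely converts the two triangles into the whole square $[0,1]^{2}$. Once the kernel of $m+m^{*}$ has been correctly identified as $y_{3}(a-b,\lambda)$ over the full square, the summation formula does the remaining work automatically.
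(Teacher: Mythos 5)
Your argument is correct and takes essentially the same route as the paper's proof: both reduce the claim to the identity $m(\lambda)+m^{*}(\lambda)=\int_{0}^{1}\!\int_{0}^{1}y_{3}(x-t,\lambda)\overline{v}(x)v(t)\,\mathrm{d}t\,\mathrm{d}x$ (the paper obtains it componentwise from the relations $\eta_{1}+\eta_{2}^{*}$, $\eta_{2}+\eta_{1}^{*}$, $\eta_{3}+\eta_{3}^{*}$ after splitting the product of two single integrals over the square into the two triangles, while you conjugate the triangle integral directly) and then factorize the kernel with the summation formula (6) of Lemma \ref{entire}. The regularity argument is likewise equivalent to the paper's.
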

\begin{proof}
Since $\eta_{n}(\lambda)$ is the Fourier transform of the convolution
\begin{eqnarray*}
\eta_{n}(\lambda)=\int_{0}^{1}\mathrm{d}x\int_{0}^{x}\mathrm{e}^{\mathrm{i}\omega^{n}\sqrt[3]{\lambda} (x-t)}\overline{v}(x){v}(t)\mathrm{d}t=\int_{0}^{1}\mathrm{e}^{\mathrm{i}\omega^{n}\sqrt[3]{\lambda} s}\mathrm{d}s\int_{s}^{1}\overline{v}(x){v}(x-s)\mathrm{d}x,
\end{eqnarray*}
then $\eta_{n}(\lambda)$ is an entire function of exponential type, thus $m(\lambda)$ is an entire function of exponential type.

Due to (\ref{mlambda}), one has
\begin{eqnarray*}
m(\lambda)+m^{*}(\lambda)=\frac{1}{3\left(\mathrm{i} \sqrt[3]{\lambda}\right)^2}[\omega(\eta_{1}(\lambda)+\eta_{2}^{*}(\lambda))+\omega^{2}(\eta_{2}(\lambda)+\eta_{1}^{*}(\lambda))+\omega^{3}(\eta_{3}(\lambda)+\eta_{3}^{*}(\lambda))].
\end{eqnarray*}
Using integration by parts, we get
\begin{eqnarray*}
\eta_{n}(\lambda)=\int_{0}^{1}\mathrm{e}^{-\mathrm{i}\omega^{n}\sqrt[3]{\lambda} t}{v}(t)\mathrm{d}t\int_{0}^{1}\mathrm{e}^{\mathrm{i}\omega^{n}\sqrt[3]{\lambda} x}\overline{v}(x)\mathrm{d}x-\int_{0}^{1}\mathrm{e}^{-\mathrm{i}\omega^{n}\sqrt[3]{\lambda} t}{v}(t)\mathrm{d}t\int_{0}^{t}\mathrm{e}^{\mathrm{i}\omega^{n}\sqrt[3]{\lambda} x}\overline{v}(x)\mathrm{d}x,
\end{eqnarray*}
therefore
\begin{eqnarray*}
&&\eta_{1}(\lambda)=\widetilde{y}_{1}(\lambda)\widetilde{y}_{2}^{*}(\lambda)-\eta_{2}^{*}(\lambda),\\
&&\eta_{2}(\lambda)=\widetilde{y}_{2}(\lambda)\widetilde{y}_{1}^{*}(\lambda)-\eta_{1}^{*}(\lambda),\\
&&\eta_{3}(\lambda)=\widetilde{y}_{3}(\lambda)\widetilde{y}_{3}^{*}(\lambda)-\eta_{3}^{*}(\lambda).
\end{eqnarray*}
And hence
\begin{eqnarray*}
m(\lambda)+m^{*}(\lambda)\!\!&=&\!\!\frac{1}{3\left(\mathrm{i} \sqrt[3]{\lambda}\right)^2}[\omega\widetilde{y}_{1}(\lambda)\widetilde{y}_{2}^{*}(\lambda)+\omega^{2}\widetilde{y}_{2}(\lambda)\widetilde{y}_{1}^{*}(\lambda)+\omega^{3}\widetilde{y}_{3}(\lambda)\widetilde{y}_{3}^{*}(\lambda)]\\
\!\!&=&\!\!\int_{0}^{1}\mathrm{d}x\int_{0}^{1}y_{3}(x-t,\lambda)\overline{v}(x){v}(t)\mathrm{d}t\\
\!\!&=&\!\!\int_{0}^{1}\mathrm{d}x\int_{0}^{1}\left[y_{1}(x,\lambda)y_{3}(-t,\lambda)+y_{2}(x,\lambda)y_{2}(-t,\lambda)\right.\\
\!\!&&\!\!\left.+y_{3}(x,\lambda)y_{1}(-t,\lambda)\right]\overline{v}(x){v}(t)\mathrm{d}t.
\end{eqnarray*}
According to (6) in Lemma \ref{entire}, (\ref{v}) and (\ref{vstar}), we obtain
\begin{eqnarray*}
m(\lambda)+m^{*}(\lambda)=\widetilde{y}_{3}(\lambda)\widetilde{y}_{1}^{*}(\lambda)+\widetilde{y}_{2}(\lambda)\widetilde{y}_{2}^{*}(\lambda)+\widetilde{y}_{1}(\lambda)\widetilde{y}_{3}^{*}(\lambda).
\end{eqnarray*}
\end{proof}

\section{The operator $L_{0}$}

In this section, we discuss the asymptotic estimates for eigenvalues, eigenfunctions and resolvent of the operator $L_{0}$.

From (\ref{csd}), we find $y_{1}(x,\lambda)$, $y_{2}(x,\lambda)$ and $y_{3}(x,\lambda)$ (\ref{yul}) are the solutions of
\begin{equation}
\mathrm{i}y^{\prime\prime\prime}(x)=\lambda y(x),\quad \lambda \in \mathbb{C} \label{L0}
\end{equation}
satisfying the initial conditions
\begin{equation*}
\left(
\begin{array}{ccc}
y_{1}(0,\lambda) & y_{2}(0,\lambda) & y_{3}(0,\lambda) \\
y_{1}^{\prime}(0,\lambda) & y_{2}^{\prime}(0,\lambda) & y_{3}^{\prime}(0,\lambda) \\
y_{1}^{\prime \prime}(0,\lambda) & y_{2}^{\prime \prime}(0,\lambda) & y_{3}^{\prime \prime}(0,\lambda)%
\end{array}%
\right) =I_{3}.
\end{equation*}
Then each unique solution of (\ref{L0}) with initial conditions
\begin{equation}
\left(y(0),y^{\prime}(0),y^{\prime\prime}(0)\right)=\left(c_{1},c_{2},c_{3}\right)\in \mathbb{C}^3 \label{initial}
\end{equation}
is
\begin{equation}
y_{0}(x,\lambda)=c_{1}y_{1}(x,\lambda)+c_{2}y_{2}(x,\lambda)+c_{3}y_{3}(x,\lambda). \label{solutionl0}
\end{equation}
So, according to the method of variation of constants, we find the unique solution to the third order inhomogeneous differential equation
\begin{equation}
\mathrm{i}y^{\prime\prime\prime}(x)=\lambda y(x)+f(x), \quad f\in L_{\mathbb{C}}^2(0,1)\label{cauchy}
\end{equation}
satisfying the initial conditions (\ref{initial}) is
\begin{equation}
y(x,\lambda)=c_{1}y_{1}(x,\lambda)+c_{2}y_{2}(x,\lambda)+c_{3}y_{3}(x,\lambda)-\mathrm{i}\int_{0}^{x}y_{3}(x-t,\lambda)f(t)\mathrm{d}t.\label{cauchys}
\end{equation}

\subsection{The properties for eigenvalues of the operator ${L}_{0}$}\

In this subsection, we deduce the characteristic function, the asymptotic estimates for eigenvalues and the eigenfunctions of the operator ${L}_{0}$.

\begin{lemma}
\label{l0s}
Let
\begin{equation*}
M(0,\lambda):=\left(
\begin{array}{ccc}
y_{1}^{\prime}(1,\lambda) & y_{2}^{\prime}(1,\lambda)-1  \\
y_{1}^{\prime \prime}(1,\lambda) & y_{2}^{\prime \prime}(1,\lambda)%
\end{array}%
\right).
\end{equation*}
The spectrum of the operator ${L}_{0}$ is denoted by
\begin{equation*}
\sigma\left({L}_{0}\right):=\{\lambda_{n} \vert n\in \mathbb{Z}\},
\end{equation*}
where $\lambda_{n}$ is the zero of the characteristic function 
\begin{eqnarray}
\Delta (0,\lambda):=\det M(0,\lambda)=-\mathrm{i}\lambda [y_{2}(-1,\lambda)+y_{2}(1,\lambda)]. \label{Delta0}
\end{eqnarray}
Besides, $\lambda_{0}=0$, $\lambda_{n}=-\lambda_{-n}$, $n\in \mathbb{N}^{*}$.
\end{lemma}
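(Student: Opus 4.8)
The plan is to reduce the eigenvalue problem to a $2\times 2$ homogeneous linear system whose coefficient matrix is $M(0,\lambda)$, and then to collapse the determinant using the algebraic identities collected in Lemma \ref{entire}.

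First I would write every solution of $\mathrm{i}y^{\prime\prime\prime}=\lambda y$ in the form (\ref{solutionl0}), $y_{0}=c_{1}y_{1}+c_{2}y_{2}+c_{3}y_{3}$, so that $(y(0),y^{\prime}(0),y^{\prime\prime}(0))=(c_{1},c_{2},c_{3})$. The first boundary condition $y^{\prime\prime}(0)=0$ forces $c_{3}=0$, and substituting into the remaining two conditions $y^{\prime}(1)=y^{\prime}(0)$ and $y^{\prime\prime}(1)=0$ yields the homogeneous system
\begin{equation*}
M(0,\lambda)\begin{pmatrix} c_{1} \\ c_{2} \end{pmatrix}=\begin{pmatrix} 0 \\ 0 \end{pmatrix},
\end{equation*}
whose coefficient matrix is exactly the stated $M(0,\lambda)$ (the entry $y_{2}^{\prime}(1,\lambda)-1$ records the $-y^{\prime}(0)=-c_{2}$ contribution of the middle condition). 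A number $\lambda$ is an eigenvalue precisely when this system has a nontrivial solution $(c_{1},c_{2})$, i.e. when $\Delta(0,\lambda)=\det M(0,\lambda)=0$; this identifies $\sigma(L_{0})$ with the zero set of $\Delta(0,\cdot)$.

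Next I would evaluate the determinant in closed form. Writing $\mu=\mathrm{i}\sqrt[3]{\lambda}$ and using the derivative relations $s^{\prime}=c$, $c^{\prime}=d$, $d^{\prime}=s$ from Lemma \ref{entire}(1), one computes $y_{1}^{\prime}(1,\lambda)=\mu d(\mu)$, $y_{1}^{\prime\prime}(1,\lambda)=\mu^{2}s(\mu)$, $y_{2}^{\prime}(1,\lambda)=c(\mu)$ and $y_{2}^{\prime\prime}(1,\lambda)=\mu d(\mu)$. Expanding the determinant then gives
\begin{equation*}
\Delta(0,\lambda)=\mu^{2}\bigl[d^{2}(\mu)-s(\mu)c(\mu)\bigr]+\mu^{2}s(\mu).
\end{equation*}
The identity (8) of Lemma \ref{entire}, $d^{2}(z)-s(z)c(z)=s(-z)$, collapses the bracket to $s(-\mu)$, so $\Delta(0,\lambda)=\mu^{2}[s(\mu)+s(-\mu)]$. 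Recalling $s(\pm\mu)=\mu\,y_{2}(\pm1,\lambda)$ from (\ref{yul}) and $\mu^{3}=-\mathrm{i}\lambda$, this becomes $\Delta(0,\lambda)=-\mathrm{i}\lambda[y_{2}(-1,\lambda)+y_{2}(1,\lambda)]$, as claimed.

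Finally, the factor $-\mathrm{i}\lambda$ shows $\lambda=0$ is a zero; directly solving $y^{\prime\prime\prime}=0$ under (\ref{D0}) shows the solution space is spanned by $1$ and $x$, so $0$ is indeed an eigenvalue and $\lambda_{0}=0$. For the symmetry I would show $\Delta(0,\cdot)$ is even: choosing the branch $\sqrt[3]{-\lambda}=-\sqrt[3]{\lambda}$ sends $\mu\mapsto-\mu$, whence $\Delta(0,-\lambda)=(-\mu)^{2}[s(-\mu)+s(\mu)]=\Delta(0,\lambda)$, so the nonzero zeros occur in pairs $\pm\lambda$; together with the reality of $\sigma(L_{0})$ from self-adjointness this permits the enumeration $\lambda_{n}=-\lambda_{-n}$, $n\in\mathbb{N}^{*}$. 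The genuine subtleties, rather than the determinant computation itself, are twofold: I must verify that the closed form is independent of the cube-root branch (using relation (3) of Lemma \ref{entire}, $s(\omega z)=\omega s(z)$, together with $\omega^{3}=1$), so that $\Delta(0,\cdot)$ is a bona fide entire function of $\lambda$; and I must treat $\lambda=0$ with care, where $M(0,0)$ degenerates to the zero matrix and $\Delta$ vanishes to second order, reflecting the double eigenvalue that later produces the triple eigenvalues of $L_{\alpha}$.
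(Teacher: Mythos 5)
Your proposal is correct and follows essentially the same route as the paper: reduce the boundary conditions to the homogeneous system $M(0,\lambda)(c_{1},c_{2})^{T}=0$ via $c_{3}=0$, evaluate $\det M(0,\lambda)$ using the derivative relations and identity (8) of Lemma \ref{entire} to obtain $-\mathrm{i}\lambda[y_{2}(-1,\lambda)+y_{2}(1,\lambda)]$, and read off the symmetry $\lambda_{n}=-\lambda_{-n}$ from the evenness of the resulting expression in $\sqrt[3]{\lambda}$ (the paper does this by passing to the equivalent cosine form, you directly from $\mu^{2}[s(\mu)+s(-\mu)]$). Your added remarks on branch independence and the degeneration of $M(0,0)$ are correct and only strengthen the argument.
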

\begin{proof}
As the unique solution of (\ref{L0})-(\ref{initial}) is (\ref{solutionl0}), then through the boundary conditions (\ref{D0}), we obtain the system of linear equations for $c_1$ and $c_2$,
\begin{equation}
M(0,\lambda)\left(
\begin{array}{c}
c_{1} \\
c_{2}
\end{array}
\right)=\left(\begin{array}{c}
0 \\
0
\end{array}%
\right).\label{ma0a1a2}
\end{equation}
Obviously, $\lambda$ is an eigenvalue of the operator ${L}_{0}$ if and only if the system (\ref{ma0a1a2}) has non-trivial solution. Due to Proposition \ref{2.1}, and (1), (8) in Lemma \ref{entire}, we can calculate the characteristic function 
\begin{eqnarray*}
\Delta (0,\lambda)\!\!&=&\!\!-\mathrm{i}\lambda [y_{2}(-1,\lambda)+y_{2}(1,\lambda)]\\
\!\!&=&\!\!-\frac{2}{3}{\left(\sqrt[3]{\lambda}\right)}^{2}\left(\cos \sqrt[3]{\lambda}+\omega^{2}\cos\omega \sqrt[3]{\lambda}+\omega\cos\omega^{2}\sqrt[3]{\lambda}\right),
\end{eqnarray*} 
so one obtains $\lambda_{n}=-\lambda_{-n}$, $n\in \mathbb{N}^{*}$.
\end{proof}

\begin{lemma}
\label{asymptotic}
The characteristic function $\Delta (0,\lambda)$ is an entire function of exponential type. The real zeros of the function $\Delta (0,\lambda)$ are $\lambda_{n}(0)$ $(n\in\mathbb{Z})$, where $\lambda_{n}(0)$ are simple and enumerated in the ascending order, except for $\lambda_{0}(0)=0$ which has multiplicity $2$. Additionally, $\sqrt[3]{{\lambda}_{n}(0)}\in((2n-1)\pi, (2n+1)\pi)$ $(n\in \mathbb{Z})$, and the following asymptotic formula holds:
\begin{equation}
\sqrt[3]{{\lambda}_{n}(0)}\rightarrow2\pi n+\frac{\pi}{3}{\rm sgn}(n)+o\left(\frac{1}{n}\right)\quad (n\rightarrow\infty).\label{guji}
\end{equation}
\end{lemma}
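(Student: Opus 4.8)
The plan is to reduce the whole statement to the study of the real zeros of one real-variable function extracted from the closed form in Lemma \ref{l0s}. Setting $z=\sqrt[3]{\lambda}$ and using Proposition \ref{2.1} to split $\cos(\omega z)$ and $\cos(\omega^2 z)$ into real and imaginary parts (the imaginary parts cancel), I would first record the real form
$$\Delta(0,\lambda)=-\tfrac{2}{3}z^2F(z),\qquad F(z)=\cos z-\cos\tfrac{z}{2}\cosh\tfrac{\sqrt3\,z}{2}+\sqrt3\,\sin\tfrac{z}{2}\sinh\tfrac{\sqrt3\,z}{2}.$$
Since $\cos$ and $\cosh$ are even and $\sin\cdot\sinh$ is even, $F$ is even, which is consistent with the symmetry $\lambda_n(0)=-\lambda_{-n}(0)$ from Lemma \ref{l0s} and lets me work only with $z>0$. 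That $\Delta(0,\lambda)$ is entire of exponential type is immediate from Lemma \ref{entire} and this closed form, so that part needs no further work.

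For the origin I would expand $y_2(\pm1,\lambda)$ from the Taylor series in Lemma \ref{entire}(9), obtaining $y_2(1,\lambda)+y_2(-1,\lambda)=-\tfrac{\mathrm{i}\lambda}{12}+O(\lambda^2)$ and hence $\Delta(0,\lambda)=-\tfrac{\lambda^2}{12}+O(\lambda^3)$. This shows $\lambda_0(0)=0$ is a zero of order exactly $2$ in $\lambda$ (equivalently $F(z)=\tfrac{z^4}{8}+O(z^6)$, a zero of order $4$ in $z$); in particular $F>0$ on a punctured neighbourhood of $0$, so no other real zero sits near the origin.

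The core of the argument is the location and simplicity of the nonzero real zeros. Splitting $\cosh,\sinh$ into exponentials gives, for $z>0$,
$$e^{-\sqrt3 z/2}F(z)=\sin\!\Big(\tfrac{z}{2}-\tfrac{\pi}{6}\Big)+r(z),\qquad r(z)=e^{-\sqrt3 z/2}\cos z-e^{-\sqrt3 z}\sin\!\Big(\tfrac{z}{2}+\tfrac{\pi}{6}\Big),$$
with $|r(z)|\le e^{-\sqrt3 z/2}+e^{-\sqrt3 z}$. The zeros of the leading term $\sin(\tfrac{z}{2}-\tfrac{\pi}{6})$ are exactly $z=2\pi n+\tfrac{\pi}{3}$, one in each interval $((2n-1)\pi,(2n+1)\pi)$. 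Because $|r(z)|<1$ already for $z\ge\pi$, at the consecutive extrema of the leading sine (where it takes the values $\pm1$) the function $e^{-\sqrt3 z/2}F$ inherits the sign $\pm1$, so it vanishes between each adjacent pair of extrema, producing a zero near $2\pi n+\tfrac{\pi}{3}$; near that zero $\big(e^{-\sqrt3 z/2}F\big)'=\tfrac12\cos(\tfrac{z}{2}-\tfrac{\pi}{6})+r'(z)$ has absolute value close to $\tfrac12$, hence is nonzero, giving simplicity, while the monotonicity of the leading sine together with the smallness of $r$ gives uniqueness in the interval. This yields $\sqrt[3]{\lambda_n(0)}\in((2n-1)\pi,(2n+1)\pi)$ and simplicity for $n\neq0$, and the evenness of $F$ transfers everything to $z<0$.

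Finally, for the asymptotics I would set $z_n=2\pi n+\tfrac{\pi}{3}+\delta_n$ and solve $\sin(\tfrac{z_n}{2}-\tfrac{\pi}{6})=-r(z_n)$; since the left side equals $(-1)^n\big(\tfrac12\delta_n+O(\delta_n^3)\big)$ and $r(z_n)=O(e^{-\sqrt3\pi n})$, this forces $\delta_n=O(e^{-\sqrt3\pi n})=o(1/n)$, which is exactly (\ref{guji}). The main obstacle I anticipate is making the "exactly one simple zero per interval" claim rigorous uniformly in $n$: the clean estimate $|r|<1$ only begins at $z\ge\pi$, so the smallest indices may need a direct sign check, and upgrading the heuristic sign-change/monotonicity reasoning into a genuine count—most safely a Rouch\'e argument on disks of fixed radius about $2\pi n+\tfrac{\pi}{3}$, using that $F$ is of exponential type so that its zeros are separated—is where the real care is required.
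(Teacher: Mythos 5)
Your proposal is correct, but it reaches the conclusion by a genuinely different route than the paper. After the common first step (the real closed form $\Delta(0,\lambda)=-\tfrac23 z^2F(z)$ with $F(z)=\cos z-\cos\tfrac z2\cosh\tfrac{\sqrt3 z}{2}+\sqrt3\sin\tfrac z2\sinh\tfrac{\sqrt3 z}{2}$, and the Taylor expansion at the origin for the double zero), the paper rewrites $F(k)=0$ as the intersection problem $f(k)=\tan\tfrac k2$ with $f(k)=\bigl(\cos\tfrac k2-\cosh\tfrac{\sqrt3k}{2}\bigr)/\bigl(\sin\tfrac k2-\sqrt3\sinh\tfrac{\sqrt3k}{2}\bigr)$, proves that $f$ is odd and strictly increasing with horizontal asymptotes $\pm\tfrac1{\sqrt3}$, and gets one root per tangent branch $((2n-1)\pi,(2n+1)\pi)$ together with $k\to 2\pi n\pm\tfrac\pi3$ from the asymptote values. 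You instead damp by $e^{-\sqrt3z/2}$ to isolate the dominant sinusoid $\sin(\tfrac z2-\tfrac\pi6)$ plus an exponentially small remainder $r$, and localize zeros by a sign-change/Rouch\'e perturbation argument. Your route buys a quantitatively stronger asymptotic ($\delta_n=O(e^{-\sqrt3\pi n})$ rather than $o(1/n)$) and avoids the divisions by $\cos\tfrac k2$ and by $\sin\tfrac k2-\sqrt3\sinh\tfrac{\sqrt3k}{2}$ on which the paper's reformulation rests; the paper's global monotonicity argument, in exchange, settles uniqueness on every branch at once, including small $|n|$, whereas you must separately exclude nonzero roots of $F$ on $(0,\pi)$. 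You flag that point yourself, and it closes cleanly: from the paper's series (\ref{yuh}) one reads off $F(z)=3\bigl(\tfrac{z^4}{4!}-\tfrac{z^{10}}{10!}+\tfrac{z^{16}}{16!}-\cdots\bigr)$, an alternating series with decreasing terms for $0<z\le\pi$, hence $F>0$ there. One small caution: the interval between the two extrema flanking $2\pi n+\tfrac\pi3$ is $((2n-1)\pi+\tfrac\pi3,(2n+1)\pi+\tfrac\pi3)$, which straddles two of the target intervals, so the bare intermediate-value step does not by itself place the root in $((2n-1)\pi,(2n+1)\pi)$; your subsequent refinement (away from $2\pi n+\tfrac\pi3$ one has $\lvert\sin(\tfrac z2-\tfrac\pi6)\rvert>\lvert r\rvert$, and near it the derivative is bounded below) is what actually does the counting, so make that the argument rather than the extrema heuristic.
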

\begin{proof}
The function $\Delta (0,\lambda)$ only has real zeros. If $\Delta (0,\lambda)$ has complex zeros, it contradicts the self-adjointness of the operator ${L}_{0}$.

(i) From the expression (\ref{Delta0}) and the identity
$y_{2}(-1,0)+y_{2}(1,0)=0$, we find ${\lambda}_{0}(0)=0$ is a zero of the function $\Delta (0,\lambda)$ which has multiplicity $2$.

(ii) For $\lambda\neq0$. The function (\ref{Delta0}) can be expressed as
\begin{eqnarray}
\Delta(0,\lambda)
=-\frac{2}{3}{k}^{2}\left(\cos k+\omega^{2}\cos\omega k+\omega\cos\omega^{2}k\right), \quad k=\sqrt[3]{\lambda}, \label{0k}
\end{eqnarray}
so the equation $\Delta (0,\lambda)=0$ is equivalent to the relation
\begin{equation*}
\cos k+\omega^{2}\cos\omega k+\omega\cos\omega^{2} k=0.
\end{equation*}
Considering Proposition \ref{omega}, we have
\begin{equation*}
\cos k+\left(-\frac{1}{2}-\frac{\sqrt 3}{2}\mathrm{i}\right)\cos\left(-\frac{1}{2}+\frac{\sqrt 3}{2}\mathrm{i}\right) k+\left(-\frac{1}{2}+\frac{\sqrt 3}{2}\mathrm{i}\right)\cos\left(-\frac{1}{2}-\frac{\sqrt 3}{2}\mathrm{i}\right) k=0,
\end{equation*}
and a simple calculation yields
\begin{equation}
\cos\frac{k}{2}\left(\cos\frac{k}{2}-\cosh\frac{\sqrt3 k}{2}\right)=\sin\frac{k}{2}\left(\sin\frac{k}{2}-\sqrt3\sinh\frac{\sqrt3 k}{2}\right).\label{cos}
\end{equation}
For $k\neq0$, from $\sin\frac{k}{2}-\sqrt3\sinh\frac{\sqrt3 k}{2}\neq0$, the equality (\ref{cos}) can be rewritten as
\begin{equation*}
f(k)=\tan\frac{k}{2},
\end{equation*}
where
\begin{equation*}
f(k):=\frac{\cos\frac{k}{2}-\cosh\frac{\sqrt3 k}{2}}{\sin\frac{k}{2}-\sqrt3\sinh\frac{\sqrt3 k}{2}}.\label{flambda}
\end{equation*}
The function $f(k)$ is odd, $f(0)=0$, and one has
\begin{equation*}
f^\prime(k)=2\frac{\cos\frac{k}{2}\cosh\frac{\sqrt3 k}{2}-1}{\left(\sin\frac{k}{2}-\sqrt3\sinh\frac{\sqrt3 k}{2}\right)^2}.
\end{equation*}
Let $g(k)=\cos\frac{k}{2}\cosh\frac{\sqrt3 k}{2}-1$, then we find
\begin{equation*}
g^\prime(k)=-\frac{1}{2}\sin\frac{k}{2}\cosh\frac{\sqrt3 k}{2}+\frac{\sqrt3}{2}\cos\frac{k}{2}\sinh\frac{\sqrt3 k}{2}.
\end{equation*}
For $k<0$, due to $\cos\frac{k}{2}<\cosh\frac{\sqrt3 k}{2}$ and $\sinh\frac{\sqrt3k}{2}<\sin\frac{k}{2}$, it yields
\begin{eqnarray*}
g^\prime(k)\!\!&<&\!\!\frac{\sqrt3}{2}\cosh\frac{\sqrt3 k}{2}\sinh\frac{\sqrt3 k}{2}-\frac{1}{2}\sin\frac{k}{2}\cosh\frac{\sqrt3 k}{2}\\
\!\!&=&\!\!\frac{1}{2}\cosh\frac{\sqrt3 k}{2}\left(\sqrt3\sinh\frac{\sqrt3 k}{2}-\sin\frac{k}{2}\right)\\
\!\!&<&\!\!0.
\end{eqnarray*}
Hence for $k\in\mathbb{R}_{-}$, we find $g(k)>g(0)=0$, and $f^\prime(k)>0$.
Since $\displaystyle\lim_{k\rightarrow -\infty}f(k)=-\frac{1}{\sqrt3}$, we can get $f(k)$ monotonically increases from $f(k)\rightarrow-\frac{1}{\sqrt3}$ $(k\rightarrow-\infty)$ to $f(0)=0$. According to its oddness, the function $f(k)$ ($k\in\mathbb{R}_{+}$) also monotonically increases from $f(0)=0$ to $f(k)\rightarrow\frac{1}{\sqrt3}$ $(k\rightarrow+\infty)$.

Therefore, in each interval $((2n-1)\pi, (2n+1)\pi)$ $(n\in \mathbb{Z}\setminus\{0\})$, the equation $f(k)=\tan\frac{k}{2}$ has only one root $k$.

From $f(k)=\tan\frac{k}{2}\rightarrow\pm\frac{1}{\sqrt3} $$(k\rightarrow\pm\infty)$ and $k=\sqrt[3]{\lambda}$, one obtains the asymptotic formula (\ref{guji}).
\end{proof}

\begin{lemma}
{\rm (1)} The eigenfunctions of the operator ${L}_{0}$ corresponding to $\lambda_{0}=0$ are
\begin{equation*}
u_{0}^{\mathrm{1}}(0,x)=\frac{e_{1}(x)}{||e_{1}(x)||}, \quad u_{0}^{\mathrm{2}}(0,x)=\frac{e_{2}(x)}{||e_{2}(x)||},
\end{equation*}
where $e_{1}(x)=1$, $e_{2}(x)=2\mathrm{i}x-\mathrm{i}$.\\
{\rm (2)} To each zero $\lambda_{n}$ of the characteristic function $\Delta(0,\lambda)$ $(\ref{Delta0})$, there corresponds an eigenfunction of the operator ${L}_{0}$,
\begin{equation*}
u_{ n}^{\mathrm{1}}(0,x)=\frac{y_{0}(x,\lambda_{n})}{||y_{0}(x,\lambda_{n})||}, \quad n\in \mathbb{Z}\setminus\{0\},
\end{equation*}
where $y_{0}(x,\lambda_{n})=y_{3}(1,\lambda_{n})y_{1}(x,\lambda_{n})-y_{2}(1,\lambda_{n})y_{2}(x,\lambda_{n})$.
\end{lemma}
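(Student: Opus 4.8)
The plan is to treat the two parts separately, reducing each to the homogeneous system $M(0,\lambda)(c_1,c_2)^{\mathsf T}=0$ from the proof of Lemma~\ref{l0s}, in which the condition $y''(0)=0$ has already forced $c_3=0$.

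For part~(1) I would set $\lambda=0$ directly in the eigenvalue equation (\ref{L0}), which becomes $y'''=0$, so that every solution is a polynomial $y(x)=a+bx+cx^2$. Imposing the boundary conditions (\ref{D0}) gives $y''(0)=2c=0$, while $y'(1)=y'(0)$ and $y''(1)=0$ are then satisfied automatically; hence the eigenspace at $\lambda_0=0$ is exactly $\mathrm{span}\{1,x\}$, which is two–dimensional and matches the multiplicity $2$ established in Lemma~\ref{asymptotic}. It remains to observe that $e_1(x)=1$ and $e_2(x)=2\mathrm{i}x-\mathrm{i}$ both lie in this span and are linearly independent; a one–line computation of $\langle e_1,e_2\rangle=\int_0^1(-2\mathrm{i}x+\mathrm{i})\,\mathrm{d}x=0$ shows they are in fact orthogonal, so after normalization $u_0^1,u_0^2$ form an orthonormal basis of the eigenspace.

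For part~(2), fix a nonzero zero $\lambda_n$ of $\Delta(0,\lambda)$ and write $\mu=\mathrm{i}\sqrt[3]{\lambda_n}$. Using relation~(1) of Lemma~\ref{entire} ($s'=c$, $d'=s$, $c'=d$) together with (\ref{yul}), I would first record the entries of $M(0,\lambda_n)$, namely $y_1'(1)=\mu d(\mu)$, $y_1''(1)=\mu^2 s(\mu)$, $y_2'(1)-1=c(\mu)-1$ and $y_2''(1)=\mu d(\mu)$, as well as $y_2(1)=\mu^{-1}s(\mu)$ and $y_3(1)=\mu^{-2}d(\mu)$. The core claim is then that the vector $(c_1,c_2)=(y_3(1,\lambda_n),-y_2(1,\lambda_n))$ spans $\ker M(0,\lambda_n)$. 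Substituting into the second row gives $\mu^2 s(\mu)\cdot\mu^{-2}d(\mu)-\mu d(\mu)\cdot\mu^{-1}s(\mu)=0$ identically, so the third boundary condition holds for free. Substituting into the first row gives $\mu^{-1}\bigl[d^2(\mu)-s(\mu)c(\mu)+s(\mu)\bigr]$, which by identity~(8) of Lemma~\ref{entire} equals $\mu^{-1}\bigl[s(-\mu)+s(\mu)\bigr]$; since $\Delta(0,\lambda)=-\mathrm{i}\lambda\,[y_2(-1,\lambda)+y_2(1,\lambda)]$ is proportional to $s(-\mu)+s(\mu)$ by (\ref{Delta0}), the hypothesis $\Delta(0,\lambda_n)=0$ makes this vanish as well. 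Thus $y_0(x,\lambda_n)=y_3(1,\lambda_n)y_1(x,\lambda_n)-y_2(1,\lambda_n)y_2(x,\lambda_n)$ satisfies all three boundary conditions and solves (\ref{L0}), so it is an eigenfunction.

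The step I expect to require the most care is verifying that this kernel vector is genuinely nonzero, i.e. that $y_2(1,\lambda_n)$ and $y_3(1,\lambda_n)$ do not vanish simultaneously (otherwise the stated formula would collapse to $0$). I would rule this out by noting that $s(\mu)=d(\mu)=0$ together with Euler's formula~(4) of Lemma~\ref{entire} would force $\mathrm{e}^{\omega^n\mu}=c(\mu)$ for $n=1,2,3$; since $\lambda_n$ is real the number $\mu$ is purely imaginary, and equating these exponentials (using $\omega-\omega^2=\mathrm{i}\sqrt3$ from Proposition~\ref{2.1}) yields $\mathrm{e}^{-\sqrt3\,\sqrt[3]{\lambda_n}}=1$, hence $\lambda_n=0$, a contradiction. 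Finally, because Lemma~\ref{asymptotic} guarantees that every $\lambda_n$ with $n\neq0$ is simple, $\ker M(0,\lambda_n)$ is one–dimensional, so the nonzero vector found above spans it and $u_n^1$ is, up to normalization, the unique eigenfunction.
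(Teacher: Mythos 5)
Your proof is correct and follows essentially the same route as the paper: part (1) by solving $y'''=0$ under the boundary conditions, and part (2) by exhibiting $(c_1,c_2)=(y_3(1,\lambda_n),-y_2(1,\lambda_n))$ as the kernel vector of the system $M(0,\lambda_n)(c_1,c_2)^{\mathsf T}=0$ from Lemma \ref{l0s}. You supply details the paper leaves implicit — the explicit row-by-row verification via identity (8) and the check that $y_2(1,\lambda_n)$ and $y_3(1,\lambda_n)$ cannot vanish simultaneously — both of which are accurate and worthwhile additions.
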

\begin{proof}
{\rm (i)} For $\lambda_{0}=0$, we find that the two linearly independent solutions of equation $\mathrm{i}y^{\prime\prime\prime}(x)=0$ satisfying the boundary conditions (\ref{D0}) are $e_{1}(x)=1$, $e_{2}(x)=2\mathrm{i}x-\mathrm{i}$, thus we construct two orthonormal eigenfunctions
\begin{equation*}
u_{0}^{\mathrm{1}}(0,x)=\frac{e_{1}(x)}{||e_{1}(x)||}, \quad u_{0}^{\mathrm{2}}(0,x)=\frac{e_{2}(x)}{||e_{2}(x)||}.
\end{equation*}

{\rm (ii)} For $\lambda_{ n}$, $n\in \mathbb{Z}\setminus\{0\}$, from the proof of Lemma \ref{l0s}, the non-trivial solution of equation $\mathrm{i}y^{\prime\prime\prime}(x)=\lambda_{n}y(x)$ determined by the boundary conditions (\ref{D0}) is
\begin{equation*}
y_{0}(x,\lambda_{n})=y_{3}(1,\lambda_{n})y_{1}(x,\lambda_{n})-y_{2}(1,\lambda_{n})y_{2}(x,\lambda_{n}).
\end{equation*}
Hence the normalized eigenfunction with respect to $\lambda_{n}$, $n\in\mathbb{Z}\setminus\{0\}$ is $u_{ n}^{\mathrm{1}}(0,x)=\frac{y_{0}(x,\lambda_{n})}{||y_{0}(x,\lambda_{n})||}$.
\end{proof}

\begin{remark}
The eigenfunctions $u_{n}^{i}:=u_{n}^{i}(0,x)$ of the operator ${L}_{0}$ satisfy
\begin{equation*}
\left<u_{n}^{i},u_{m}^{i}\right>=\delta_{n,m}:=\left\{\begin{array}{ll}
1,&m=n,\quad i=j,\quad m,n\in \mathbb{Z},\\
0,&m\neq n, \quad m,n\in \mathbb{Z}\setminus\{0\},\\
0,&m=n=0,\quad i\neq j,
\end{array}
\right.
\end{equation*}
and form an orthonormal basis in the Hilbert space $L_{\mathbb{C}}^{2}(0,1)$.
\end{remark}

\subsection{The resolvent of the operator ${L}_{0}$}\

In this subsection, we derive the expression for the resolvent of the operator ${L}_{0}$.
\begin{proposition}
{\rm \cite{BS2005}}
The resolvent $R_{{L}_{0}}(\lambda)=\left({L}_{0}-\lambda I\right)^{-1}$ of the operator ${L}_{0}$ is
\begin{equation*}
R_{{L}_{0}}(\lambda)f=\sum_{n\in \mathbb{Z}}\frac{f_{n}}{\lambda_{n}-\lambda},\label{rl0}
\end{equation*}
where
\begin{equation*}
f_{n}:=\left\{\begin{array}{ll}
f_{0}^{\mathrm{1}}u_{0}^{\mathrm{1}}+f_{0}^{\mathrm{2}}u_{0}^{\mathrm{2}}, &  {for\quad n=0},\\
f_{n}^{\mathrm{1}}u_{n}^{\mathrm{1}},&  {for\quad n\in \mathbb{Z}\setminus\{0\}},
\end{array}
\right.
\end{equation*}
and $f_{n}^{i}:=\left<f,u_{n}^{i}\right>$ are Fourier coefficients of $f\in L_{\mathbb{C}}^{2}(0,1)$ in the basis $\{u_{n}^{i}\}$.
\end{proposition}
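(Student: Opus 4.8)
The plan is to read off the resolvent formula directly from the spectral decomposition furnished by the orthonormal eigenbasis $\{u_n^i\}$, using the self-adjointness of $L_0$ together with the growth of its eigenvalues established in Lemma \ref{asymptotic}. Fix $\lambda$ in the resolvent set, $\lambda\notin\sigma(L_0)$. Because the real eigenvalues $\lambda_n$ accumulate only at $\pm\infty$ (by the asymptotics $\sqrt[3]{\lambda_n(0)}\sim 2\pi n$), we have $|\lambda_n-\lambda|\to\infty$, so $\delta:=\inf_n|\lambda_n-\lambda|$ is attained at some finite index and is strictly positive. Since $\{u_n^i\}$ is an orthonormal basis of $L_{\mathbb{C}}^2(0,1)$, every $f$ expands as $f=\sum_n f_n$ with $f_n$ the orthogonal projection of $f$ onto the eigenspace at $\lambda_n$, and Parseval gives $\|f\|^2=|f_0^1|^2+|f_0^2|^2+\sum_{n\neq 0}|f_n^1|^2$.

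First I would introduce the candidate $g:=\sum_n \frac{f_n}{\lambda_n-\lambda}$ and check that it defines an element of the Hilbert space: by orthonormality its squared norm is $\sum_n \frac{\|f_n\|^2}{|\lambda_n-\lambda|^2}\le \delta^{-2}\|f\|^2<\infty$, so $g$ converges in $L_{\mathbb{C}}^2(0,1)$ with $\|g\|\le \delta^{-1}\|f\|$. This also shows the resolvent, once identified, is bounded with the expected norm estimate.

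Next I would verify that $g\in\mathcal{D}(L_0)$ and that $(L_0-\lambda I)g=f$. The coefficients of $g$ in the basis are $\frac{f_n^i}{\lambda_n-\lambda}$, and the domain characterization for the self-adjoint operator $L_0$ requires $\sum_n |\lambda_n|^2\frac{\|f_n\|^2}{|\lambda_n-\lambda|^2}<\infty$; since $|\lambda_n|^2/|\lambda_n-\lambda|^2\to 1$ as $|n|\to\infty$, this sum is dominated by a constant times $\|f\|^2$ and is therefore finite, placing $g$ in the domain. Applying the closed operator $L_0-\lambda I$ term by term to the partial sums—legitimate once both the partial sums and their images are seen to converge—and using $L_0 u_n^i=\lambda_n u_n^i$, one obtains $(L_0-\lambda I)g=\sum_n \frac{\lambda_n-\lambda}{\lambda_n-\lambda}f_n=\sum_n f_n=f$. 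Finally, since $\lambda$ lies in the resolvent set, $L_0-\lambda I$ is a bijection of $\mathcal{D}(L_0)$ onto $L_{\mathbb{C}}^2(0,1)$, so injectivity forces $g$ to be the unique preimage of $f$; hence $g=R_{L_0}(\lambda)f$, which is the stated formula.

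The main obstacle I anticipate is justifying that the unbounded operator $L_0$ may be moved inside the infinite series defining $g$. I expect this to be the one genuinely delicate point, and it is resolved precisely by combining the domain membership established above with the closedness of the self-adjoint $L_0$; both ingredients ultimately rest on the eigenvalue asymptotics of Lemma \ref{asymptotic}, which guarantee the convergence of the relevant weighted series.
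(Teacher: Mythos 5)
Your argument is correct: it is the standard spectral-theorem proof that the eigenfunction expansion diagonalizes a self-adjoint operator with compact resolvent, and every step (the positive distance $\delta$ from $\lambda$ to the spectrum, the $\ell^2$ convergence of the weighted coefficients, the domain membership via $|\lambda_n|^2/|\lambda_n-\lambda|^2\to 1$, and the termwise application of the closed operator $L_0-\lambda I$) is sound, given the completeness and orthonormality of $\{u_n^i\}$ that the paper records in the preceding remark. The paper itself supplies no proof of this proposition — it is stated with a citation to \cite{BS2005} — so your write-up simply fills in the standard argument that the citation points to; there is no genuine divergence of method to compare.
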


\begin{lemma}
\label{rel0}
For $\lambda \in \mathbb{C}$, $f\in L^2_{\mathbb{C}}(0,1)$, the resolvent $R_{{L}_{0}}$ of the operator ${L}_{0}$ can be expressed by
\begin{eqnarray*}
\left(R_{{L}_{0}}\left(\lambda\right)f\right)(x)\!\!&=&\!\!\frac{\mathrm{i}}{\Delta (0,\lambda)}\int_{0}^{x}\left[-y_{1}(x,\lambda)y_{1}(-t,\lambda)y_{1}(-1,\lambda)+\mathrm{i}\lambda y_{2}(x,\lambda)y_{1}(-t,\lambda)\right.\\
\!\!&&\!\!\left.y_{3}(-1,\lambda)+\mathrm{i}\lambda y_{3}(x,\lambda)y_{1}(-t,\lambda)y_{2}(-1,\lambda)+\mathrm{i}\lambda y_{3}(x-t,\lambda)\right.\\
\!\!&&\!\!\left.y_{2}(1,\lambda)+y_{1}(x,\lambda)y_{1}(1-t,\lambda)\right.]f(t)\mathrm{d}t\\
\!\!&&\!\!+\frac{\mathrm{i}}{\Delta (0,\lambda)}\int_{x}^{1}\left[-y_{1}(x,\lambda)y_{1}(-t,\lambda)y_{1}(-1,\lambda)-\mathrm{i}\lambda y_{1}(x,\lambda)y_{3}(-t,\lambda)\right.\\
\!\!&&\!\!\left.y_{2}(-1,\lambda)+\mathrm{i}\lambda y_{2}(x,\lambda)y_{1}(-t,\lambda)y_{3}(-1,\lambda)+y_{1}(x,\lambda)y_{1}(1-t,\lambda)\right.\\
\!\!&&\!\!\left.-\mathrm{i}\lambda y_{2}(x,\lambda)y_{2}(-t,\lambda)y_{2}(-1,\lambda) \right.]f(t)\mathrm{d}t.
\end{eqnarray*}
\end{lemma}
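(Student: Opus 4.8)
The plan is to realize $y=R_{L_0}(\lambda)f$ as the unique solution of $(L_0-\lambda I)y=f$, i.e. of the inhomogeneous equation (\ref{cauchy}), subject to the boundary conditions (\ref{D0}), and then to collapse that solution into a single integral against a kernel. I would start from the variation-of-constants formula (\ref{cauchys}), which already records the general solution
\[
y(x,\lambda)=c_{1}y_{1}(x,\lambda)+c_{2}y_{2}(x,\lambda)+c_{3}y_{3}(x,\lambda)-\mathrm{i}\int_{0}^{x}y_{3}(x-t,\lambda)f(t)\,\mathrm{d}t,
\]
and determine $c_1,c_2,c_3$ from (\ref{D0}). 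From the initial values forming $I_3$ together with the differentiation rules $y_2'=y_1$, $y_3'=y_2$, $y_3''=y_1$, $y_1'=-\mathrm{i}\lambda y_3$, $y_1''=-\mathrm{i}\lambda y_2$, $y_2''=-\mathrm{i}\lambda y_3$ (read off from (1) of Lemma \ref{entire} and (\ref{yul})), the particular solution and its first two derivatives vanish at $x=0$, so $y''(0)=c_3$; hence the condition $y''(0)=0$ forces $c_3=0$.

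The remaining conditions $y'(1)=y'(0)$ and $y''(1)=0$ then form a $2\times2$ linear system for $(c_1,c_2)$ whose coefficient matrix is exactly $M(0,\lambda)$ of Lemma \ref{l0s} and whose right-hand side is $(-y_p'(1),-y_p''(1))^{\top}$, where $y_p(x)=-\mathrm{i}\int_0^x y_3(x-t,\lambda)f(t)\,\mathrm{d}t$. Differentiating $y_p$ with $y_3'=y_2$, $y_3''=y_1$ gives $y_p'(1)=-\mathrm{i}\int_0^1 y_2(1-t)f\,\mathrm{d}t$ and $y_p''(1)=-\mathrm{i}\int_0^1 y_1(1-t)f\,\mathrm{d}t$. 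Cramer's rule then produces $c_1,c_2$ as integrals $\int_0^1(\cdots)f\,\mathrm{d}t$ with denominator $\det M(0,\lambda)=\Delta(0,\lambda)$; here I would keep the matrix entries in the form $y_1'(1)=-\mathrm{i}\lambda y_3(1)$, $y_1''(1)=-\mathrm{i}\lambda y_2(1)$, $y_2'(1)=y_1(1)$, $y_2''(1)=-\mathrm{i}\lambda y_3(1)$, so that $c_1,c_2$ are expressed through $y_j(1,\lambda)$ only.

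Substituting back, $y(x)=c_1y_1(x)+c_2y_2(x)+y_p(x)$, where the first two terms carry integrals over $(0,1)$ and $y_p$ carries one over $(0,x)$. Writing $\int_0^1=\int_0^x+\int_x^1$ produces two kernels differing only by the $y_p$-contribution $-\mathrm{i}y_3(x-t,\lambda)$, which is why the stated $\int_0^x$- and $\int_x^1$-kernels coincide except for the term carrying $y_3(x-t,\lambda)$. The final step is to bring the raw kernel into the published form: I would apply the addition formulas (6) of Lemma \ref{entire}, e.g. $y_1(1-t)=y_1(1)y_1(-t)-\mathrm{i}\lambda y_2(1)y_3(-t)-\mathrm{i}\lambda y_3(1)y_2(-t)$ and $y_3(x-t)=y_1(x)y_3(-t)+y_2(x)y_2(-t)+y_3(x)y_1(-t)$, to re-expand every $y_j(1-t)$, $y_j(1)$ and $y_3(x-t)$ into the atomic products $y_i(x)y_j(-t)y_k(\pm1)$, and use $\Delta(0,\lambda)=-\mathrm{i}\lambda[y_2(-1,\lambda)+y_2(1,\lambda)]$ to absorb the resulting constants and fix the normalizing factor $\mathrm{i}/\Delta(0,\lambda)$.

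I expect the real obstacle to be this last bookkeeping rather than any conceptual difficulty: matching the two displayed kernels requires careful tracking of the factors $\mathrm{i}$, $\lambda$, $\sqrt[3]{\lambda}$ and of the numerous identities of Lemma \ref{entire}, together with recombining atomic products back into the compact symbols $y_3(x-t,\lambda)$ and $y_1(1-t,\lambda)$ that occur in the statement. A decisive internal check, which I would perform, is that after invoking $\Delta(0,\lambda)=-\mathrm{i}\lambda[y_2(-1,\lambda)+y_2(1,\lambda)]$ the difference of the two kernels collapses to exactly $-\mathrm{i}y_3(x-t,\lambda)$; this simultaneously confirms the placement of the $y_3(x-t,\lambda)$ term in the first integral and the overall factor $\mathrm{i}/\Delta(0,\lambda)$.
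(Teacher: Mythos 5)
Your proposal is correct and follows essentially the same route as the paper: variation of constants (\ref{cauchys}), $c_{3}=0$ from $y''(0)=0$, the $2\times2$ system $M(0,\lambda)(c_1,c_2)^{\top}=\bigl(\mathrm{i}\int_0^1 y_2(1-t,\lambda)f\,\mathrm{d}t,\ \mathrm{i}\int_0^1 y_1(1-t,\lambda)f\,\mathrm{d}t\bigr)^{\top}$, Cramer's rule, and the addition/involution identities of Lemma \ref{entire} to reshape the kernel. Your consistency check is also sound: the difference of the two displayed kernels is $\mathrm{i}\lambda\,[y_{2}(-1,\lambda)+y_{2}(1,\lambda)]\,y_{3}(x-t,\lambda)=-\Delta(0,\lambda)\,y_{3}(x-t,\lambda)$, which after multiplying by $\mathrm{i}/\Delta(0,\lambda)$ recovers exactly the particular-solution term $-\mathrm{i}\,y_{3}(x-t,\lambda)$.
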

\begin{proof}
Let $R_{{L}_{0}}(\lambda)f=\left({L}_{0}-\lambda I\right)^{-1}f=y$, then
\begin{equation}
{L}_{0}y=\lambda y+f,\label{f}
\end{equation}
and $(\ref{cauchys})$ is the solution of $(\ref{f})$ satisfying the initial conditions (\ref{initial}). Considering the boundary conditions (\ref{D0}) and $y(x,\lambda)$ in (\ref{cauchys}), we obtain the system of linear equations
\begin{equation*}
M(0,\lambda)\left(
\begin{array}{c}
c_{1}\\
c_{2}
\end{array}
\right)=\left(\begin{array}{c}
\mathrm{i}\displaystyle\int_{0}^{1}y_{2}(1-t,\lambda)f(t)\mathrm{d}t \\
\mathrm{i}\displaystyle\int_{0}^{1}y_{1}(1-t,\lambda)f(t)\mathrm{d}t
\end{array}%
\right)
\end{equation*}
relative to $c_{1}$, $c_{2}$, where Lemma \ref{l0s} gives $M(0,\lambda)$. 
Through the relations (1), (6), (8) in Lemma \ref{entire}, a straightforward calculation shows
\begin{eqnarray*}
c_{1}\!\!&=&\!\!\frac{\mathrm{i}}{\Delta(0,\lambda)}\int_{0}^{1}\left.[-\mathrm{i}\lambda y_{2}(1-t,\lambda)y_{3}(1,\lambda)-y_{1}(1-t,\lambda)y_{1}(1,\lambda)\right.\\
\!\!&&\!\!\left.+y_{1}(1-t,\lambda)\right.]f(t)\mathrm{d}t\\
\!\!&=&\!\!\frac{\mathrm{i}}{\Delta(0,\lambda)}\int_{0}^{1}\left.[-\mathrm{i}\lambda y_{3}(-t,\lambda)y_{2}(-1,\lambda)-y_{1}(-t,\lambda)y_{1}(-1,\lambda)\right.\\
\!\!&&\!\!\left.+y_{1}(1-t,\lambda)\right.]f(t)\mathrm{d}t,\\
c_{2}\!\!&=&\!\!\frac{\mathrm{i}}{\Delta(0,\lambda)}\int_{0}^{1}(-\mathrm{i}\lambda)\left.[y_{1}(1-t,\lambda)y_{3}(1,\lambda)-y_{2}(1-t,\lambda)y_{2}(1,\lambda)\right.]f(t)\mathrm{d}t\\
\!\!&=&\!\!\frac{\mathrm{i}}{\Delta(0,\lambda)}\int_{0}^{1}(-\mathrm{i}\lambda)\left.[y_{2}(-t,\lambda)y_{2}(-1,\lambda)-y_{1}(-t,\lambda)y_{3}(-1,\lambda)\right.]f(t)\mathrm{d}t.
\end{eqnarray*}
So, we find
\begin{eqnarray*}
y(x,\lambda)
\!\!&=&\!\!c_{1}y_{1}(x,\lambda)+c_{2}y_{2}(x,\lambda)-\mathrm{i}\int_{0}^{x}y_{3}(x-t,\lambda)f(t)\mathrm{d}t\\
\!\!&=&\!\!\frac{\mathrm{i}}{\Delta (0,\lambda)}\int_{0}^{1}\left[-y_{1}(x,\lambda)y_{1}(-t,\lambda)y_{1}(-1,\lambda)-\mathrm{i}\lambda y_{1}(x,\lambda)y_{3}(-t,\lambda)\right.\\
\!\!&&\!\!\left.y_{2}(-1,\lambda)+\mathrm{i}\lambda y_{2}(x,\lambda)y_{1}(-t,\lambda)y_{3}(-1,\lambda)-\mathrm{i}\lambda y_{2}(x,\lambda)y_{2}(-t,\lambda)\right.\\
\!\!&&\!\!\left.y_{2}(-1,\lambda)+y_{1}(x,\lambda)y_{1}(1-t,\lambda)\right]f(t)\mathrm{d}t-\mathrm{i}\int_{0}^{x}y_{3}(x-t,\lambda)f(t)\mathrm{d}t.
\end{eqnarray*}
From the equality (\ref{Delta0}), we can prove this lemma.
\end{proof}

\section{The operator ${L}_{\alpha}$}
In this section, we study the characteristic function, spectrum and eigenfunctions of the operator ${L}_{\alpha}$. Without loss of generality, assume that $\Vert v\Vert_{L^2_{\mathbb{C}}(0,1)}=1$. Domains of the operators ${L}_{\alpha}$ and ${L}_{0}$ coincide, $\mathcal{D}\left({L}_{\alpha}\right)=\mathcal{D}\left({L}_{0}\right)$.

\subsection{The characteristic function of the operator ${L}_{\alpha}$}\

According to (\ref{cauchy})-(\ref{cauchys}), the unique solution of ${L}_{\alpha}y=\lambda y$ satisfying the initial conditions (\ref{initial}) is
\begin{equation}
y(x,\lambda)=c_{1}y_{1}(x,\lambda)+c_{2}y_{2}(x,\lambda)+c_{3}y_{3}(x,\lambda)+\mathrm{i}\alpha\left<y,v\right>\int_{0}^{x}y_{3}(x-t,\lambda)v(t)\mathrm{d}t.\label{cauchysv}
\end{equation}
In this subsection, we explore the expression for the characteristic function of the operator ${L}_{\alpha}$ via (\ref{cauchysv}).
\begin{lemma}
Let
\begin{equation*}
M(\alpha,\lambda):=\left(
\begin{array}{cccc}
\widetilde{y}_{1}^{*}(\lambda) & \widetilde{y}_{2}^{*}(\lambda) & \widetilde{y}_{3}^{*}(\lambda) & \mathrm{i}\alpha m(\lambda)-1 \\
0 & 0 & 1 & 0 \\
-\mathrm{i}\lambda y_{3}(1,\lambda) & y_{1}(1,\lambda)-1 & y_{2}(1,\lambda) &\mathrm{i}\alpha\displaystyle\int_{0}^{1}y_{2}(1-t,\lambda)v(t)\mathrm{d}t\\
-\mathrm{i}\lambda y_{2}(1,\lambda) & -\mathrm{i}\lambda y_{3}(1,\lambda) & y_{1}(1,\lambda) & \mathrm{i}\alpha\displaystyle\int_{0}^{1}y_{1}(1-t,\lambda)v(t)\mathrm{d}t%
\end{array}%
\right).
\end{equation*}
The characteristic function of the operator ${L}_{\alpha}$ is
\begin{eqnarray}
\Delta(\alpha,\lambda):=\det M(\alpha, \lambda)=\Delta(0, \lambda)+\mathrm{i}\alpha[{F}(\lambda)-{F}^{*}(\lambda)],\label{deltaa}
\end{eqnarray}
where
\begin{eqnarray}
{F}(\lambda):=\widetilde{y}_{1}^{*}(\lambda)\widetilde{\nu}_{1}(-\lambda)+\mathrm{i}\lambda y_{2}(1,\lambda)m(\lambda). \label{lanm}
\end{eqnarray}
\end{lemma}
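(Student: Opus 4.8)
The plan is to realize the eigenvalue problem $L_{\alpha}y=\lambda y$ as a homogeneous linear system in four scalar unknowns and to read off $\Delta(\alpha,\lambda)$ as the determinant of that system. Starting from the solution formula (\ref{cauchysv}), observe that it depends on the three initial data $c_{1},c_{2},c_{3}$ and on the scalar $P:=\langle y,v\rangle$, which must itself be self-consistent. Since $y_{j}^{(i-1)}(0,\lambda)=\delta_{ij}$ and the convolution $\int_{0}^{x}y_{3}(x-t,\lambda)v(t)\,\mathrm{d}t$ together with its first two derivatives vanishes at $x=0$ (because $y_{2}(0,\lambda)=y_{3}(0,\lambda)=0$), we get $(y(0),y'(0),y''(0))=(c_{1},c_{2},c_{3})$. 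Imposing the three boundary conditions (\ref{fgh}) and using the derivative rules of Lemma \ref{entire}(1) (so that $y_{1}'=-\mathrm{i}\lambda y_{3}$, $y_{2}'=y_{1}$, $y_{3}'=y_{2}$, and the analogous second-order relations) produces three linear relations among $c_{1},c_{2},c_{3},P$; the coefficients of $P$ are exactly $\mathrm{i}\alpha\int_{0}^{1}y_{2}(1-t,\lambda)v(t)\,\mathrm{d}t$ and $\mathrm{i}\alpha\int_{0}^{1}y_{1}(1-t,\lambda)v(t)\,\mathrm{d}t$. The fourth relation is the self-consistency equation: pairing (\ref{cauchysv}) with $v$ and using (\ref{vstar}) and (\ref{mlambda}) gives $P=\widetilde{y}_{1}^{*}(\lambda)c_{1}+\widetilde{y}_{2}^{*}(\lambda)c_{2}+\widetilde{y}_{3}^{*}(\lambda)c_{3}+\mathrm{i}\alpha m(\lambda)P$. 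Collecting the four relations into $M(\alpha,\lambda)(c_{1},c_{2},c_{3},P)^{\mathrm{T}}=0$ reproduces exactly the matrix displayed in the statement, so $\lambda$ is an eigenvalue if and only if this system has a nontrivial solution, i.e. if and only if $\det M(\alpha,\lambda)=0$.

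Next I would exploit the fact that $\alpha$ enters $M(\alpha,\lambda)$ only through the fourth column, which splits as $(-1,0,0,0)^{\mathrm{T}}+\mathrm{i}\alpha\,(m(\lambda),0,J_{2},J_{1})^{\mathrm{T}}$ with $J_{i}:=\int_{0}^{1}y_{i}(1-t,\lambda)v(t)\,\mathrm{d}t$. By multilinearity of the determinant in that column, $\det M(\alpha,\lambda)=\det M^{(0)}+\mathrm{i}\alpha\det M^{(1)}$, where $M^{(0)}$ and $M^{(1)}$ share the first three columns of $M(\alpha,\lambda)$ and carry, respectively, the two pieces of the fourth column. For $M^{(0)}$ a Laplace expansion along the second row $(0,0,1,0)$ and then along the fourth column $(-1,0,0,0)^{\mathrm{T}}$ collapses the determinant to the $2\times2$ minor $\det M(0,\lambda)$ of Lemma \ref{l0s} (after substituting $y_{1}'(1,\lambda)=-\mathrm{i}\lambda y_{3}(1,\lambda)$, $y_{2}'(1,\lambda)=y_{1}(1,\lambda)$, $y_{1}''(1,\lambda)=-\mathrm{i}\lambda y_{2}(1,\lambda)$, $y_{2}''(1,\lambda)=-\mathrm{i}\lambda y_{3}(1,\lambda)$); hence $\det M^{(0)}=\Delta(0,\lambda)$, which accounts for the unperturbed term.

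The substance of the proof is the identification $\det M^{(1)}=F(\lambda)-F^{*}(\lambda)$. Expanding $M^{(1)}$ along the row $(0,0,1,0)$ leaves a $3\times3$ determinant in columns $1,2,4$; developing it along the fourth-column entries $(m(\lambda),J_{2},J_{1})$ produces one term $m(\lambda)\Delta(0,\lambda)$ (the same $2\times2$ block as above) together with two terms in which the remaining $2\times2$ minors multiply $\widetilde{y}_{1}^{*}(\lambda)$ and $\widetilde{y}_{2}^{*}(\lambda)$. Here I would first replace $J_{1}=\widetilde{\nu}_{1}(-\lambda)$ and $J_{2}=\widetilde{\nu}_{2}(-\lambda)$ by Lemma \ref{wcsd}. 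Isolating the $\widetilde{y}_{1}^{*}(\lambda)$-coefficient and splitting $y_{1}(1,\lambda)-1$ makes the term $\widetilde{y}_{1}^{*}(\lambda)\widetilde{\nu}_{1}(-\lambda)$ appear explicitly, and the $m(\lambda)\Delta(0,\lambda)$ piece combines with the $2\times2$ blocks to yield the term $\mathrm{i}\lambda y_{2}(1,\lambda)m(\lambda)$; these two assemble into $F(\lambda)$ as in (\ref{lanm}). The complementary combination must then be shown to equal $-F^{*}(\lambda)$, where $F^{*}(\lambda)=\overline{F(\bar\lambda)}$. This is achieved by invoking Lemma \ref{mmstar}, which trades the symmetric part $m(\lambda)+m^{*}(\lambda)$ for the bilinear expression $\widetilde{y}_{3}\widetilde{y}_{1}^{*}+\widetilde{y}_{2}\widetilde{y}_{2}^{*}+\widetilde{y}_{1}\widetilde{y}_{3}^{*}$, together with the remaining two identities of Lemma \ref{wcsd} relating $y_{i}(1,\lambda)\widetilde{y}_{j}(\lambda)$ to $\widetilde{\nu}_{k}(-\lambda)$; after collecting, the leftover terms reorganize into $-F^{*}(\lambda)$.

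The bookkeeping in this last step is the genuine obstacle: one must match, term by term, the minors of $M^{(1)}$ against $F$ and $F^{*}$, and in particular recover an $m^{*}(\lambda)$ contribution which appears only implicitly, through Lemma \ref{mmstar}, from the products of Fourier transforms. The most reliable way to carry it out is to pass to the elementary Fourier data $\widetilde{\psi}_{n},\widetilde{\psi}_{n}^{*},\eta_{n}$ and collapse all the $\omega$-sums by Proposition \ref{2.1}, rather than juggling the quantities $y_{i}(1,\lambda)$ directly. A useful guiding principle is that the correction is, up to the factor $\Delta(0,\lambda)$, the Krein-type rank-one term attached to the perturbation $L_{\alpha}=L_{0}+\alpha\langle\cdot,v\rangle v$, whose antisymmetry under the involution $f\mapsto f^{*}$ forces precisely the shape $F-F^{*}$. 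As a consistency check, for real $\lambda$ one has $F^{*}(\lambda)=\overline{F(\lambda)}$, so $\mathrm{i}\alpha(F(\lambda)-F^{*}(\lambda))$ is real; together with $\Delta(0,\lambda)\in\mathbb{R}$ this makes $\Delta(\alpha,\lambda)$ real on the line, as the self-adjointness of $L_{\alpha}$ requires.
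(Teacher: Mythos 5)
Your proposal follows essentially the same route as the paper: derive the $4\times 4$ homogeneous system for $(c_{1},c_{2},c_{3},\langle y,v\rangle)$ by pairing (\ref{cauchysv}) with $v$ and imposing the boundary conditions (\ref{fgh}), then expand $\det M(\alpha,\lambda)$ along the fourth column and simplify via Lemma \ref{wcsd} and Lemma \ref{mmstar}; your multilinearity splitting of the last column is just a repackaging of the paper's cofactor expansion, which produces the same three pieces (the $J_{1}$, $J_{2}$ cofactors and $-(\mathrm{i}\alpha m(\lambda)-1)\Delta(0,\lambda)$). The paper likewise leaves the final term-matching to "a calculation," so your outline is faithful to its proof.
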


\begin{proof}
Multiplying equality (\ref{cauchysv}) by $\overline{v}(x)$ and integrating it from $0$
to $1$, and using $\widetilde{y}_{1}^{*}(\lambda)$, $\widetilde{y}_{2}^{*}(\lambda)$, $\widetilde{y}_{3}^{*}(\lambda)$ in (\ref{vstar}) and $m(\lambda)$ in (\ref{mlambda}),
we have
\begin{equation*}
c_{1}\widetilde{y}_{1}^{*}(\lambda)+c_{2}\widetilde{y}_{2}^{*}(\lambda)
+c_{3}\widetilde{y}_{3}^{*}(\lambda)+\left<y,v\right>(\mathrm{i}\alpha m(\lambda)-1)=0.\label{yv}
\end{equation*}
This together with the boundary conditions (\ref{fgh}), we get $c_{1}$, $c_{2}$, $c_{3}$ and $\left<y,v\right>$ satisfying the following system of linear equations,
\begin{equation}
M(\alpha, \lambda)
\left(\begin{array}{c}
c_{1}\\
c_{2}\\
c_{3}\\
\left<y,v\right>
\end{array}
\right)=\left(\begin{array}{c}
0\\
0\\
0\\
0
\end{array}
\right).\label{123yv}
\end{equation}
It is obvious that $\lambda$ is an eigenvalue of the operator ${L}_{\alpha}$ if and only if the system (\ref{123yv}) has non-trivial solution. Thereby, $\Delta(\alpha,\lambda)$ is the characteristic function of the operator ${L}_{\alpha}$. Owing to Lemma \ref{wcsd} and Lemma \ref{mmstar},  the calculation shows

\begin{eqnarray*}
\Delta(\alpha,\lambda)\!\!&=&\!\!\mathrm{i}\alpha\int_{0}^{1}y_{2}(1-t,\lambda)v(t)\mathrm{d}t
\left[-\mathrm{i}\lambda \widetilde{y}_{1}^{*}(\lambda)y_{3}(1,\lambda)+\mathrm{i}\lambda \widetilde{y}_{2}^{*}(\lambda)y_{2}(1,\lambda)\right]\\
\!\!&&\!\!\left.-\mathrm{i}\alpha\int_{0}^{1}y_{1}(1-t,\lambda)v(t)\mathrm{d}t\left[\widetilde{y}_{1}^{*}(\lambda) (y_{1}(1,\lambda)-1)+\mathrm{i}\lambda \widetilde{y}_{2}^{*}(\lambda)y_{3}(1,\lambda)\right]\right.\\
\!\!&&\!\!\left.-(\mathrm{i}\alpha m(\lambda)-1)\Delta(0,\lambda)\right.\\
\!\!&=&\!\!\mathrm{i}\alpha\left[\mathrm{i}\lambda y_{2}(1,\lambda)m(\lambda)-\mathrm{i}\lambda y_{2}(-1,\lambda)m^{*}(\lambda)-\widetilde{y}_{1}(\lambda)\widetilde{\nu}_{1}^{*}(-\lambda)+\widetilde{y}_{1}^{*}(\lambda)\widetilde{\nu}_{1}(-\lambda)\right]\\
\!\!&&\!\!\left.+\Delta(0,\lambda).\right.
\end{eqnarray*}
Using ${F}(\lambda)$ in (\ref{lanm}), we obtain (\ref{deltaa}). 
\end{proof}

\subsection{The resolvent and spectrum of the operator ${L}_{\alpha}$}\

In this subsection,  we discuss the resolvent and spectrum of the operator ${L}_{\alpha}$.

\begin{lemma}
\label{RLA}
The resolvent $R_{{L}_{\alpha}}(\lambda)=\left({L}_{\alpha}-\lambda I\right)^{-1}$ of the operator ${L}_{\alpha}$ is expressed by the resolvent $R_{{L}_{0}}(\lambda)=\left({L}_{0}-\lambda I\right)^{-1}$ of the operator ${L}_{0}$ and satisfies
\begin{eqnarray}
R_{{L}_{\alpha}}(\lambda)f\!\!&=&\!\!R_{{L}_{0}}(\lambda)f-\alpha\frac{\left<R_{{L}_{\alpha}}(\lambda)f,v\right>}{1+\alpha\left<R_{{L}_{0}}(\lambda)v,v\right>}\cdot R_{{L}_{0}}(\lambda)v\notag\\
\!\!&=&\!\!\sum_{n\in\mathbb{Z}}\frac{f_{n}}{\lambda_{n}-\lambda}-\alpha\frac{\displaystyle\sum_{n\in\mathbb{Z}}\frac{f_{n}\overline{v}_{n}}{\lambda_{n}-\lambda}}{1+\alpha\displaystyle\sum_{n\in\mathbb{Z}}\frac{\vert v_{n}\vert^{2}}{\lambda_{n}-\lambda}}\cdot \sum_{n\in\mathbb{Z}}\frac{v_{n}}{\lambda_{n}-\lambda},\label{rla}
\end{eqnarray}
where
\begin{equation*}
v_{n}:=\left\{\begin{array}{ll}
v_{0}^{\mathrm{1}}u_{0}^{\mathrm{1}}+v_{0}^{\mathrm{2}}u_{0}^{\mathrm{2}}, & {for\quad n=0},\\
v_{n}^{\mathrm{1}}u_{n}^{\mathrm{1}},&  {for\quad n\in \mathbb{Z}\setminus\{0\}},
\end{array}
\right.
\end{equation*}
and $v_{n}^{{i}}:=\left<v,u_{n}^{{i}}\right>$ are Fourier coefficients of $v\in L_{\mathbb{C}}^{2}(0,1)$ in the basis $\{u_{n}^{{i}}\}$.
\end{lemma}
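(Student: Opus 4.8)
The plan is to regard $L_{\alpha}=L_{0}+\alpha\langle\,\cdot\,,v\rangle v$ as a bounded rank-one perturbation of the self-adjoint operator $L_{0}$ and to deduce the stated identity from the resolvent equation together with one scalar reduction, i.e.\ the one-dimensional Sherman--Morrison (Krein) mechanism. Throughout I fix $\lambda$ in the resolvent set of both $L_{0}$ and $L_{\alpha}$.

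First I would set $y:=R_{{L}_{\alpha}}(\lambda)f$, so that $(L_{\alpha}-\lambda I)y=f$. Inserting the definition of $L_{\alpha}$ gives
\[
(L_{0}-\lambda I)y=f-\alpha\langle y,v\rangle v .
\]
Since $\lambda$ lies in the resolvent set of $L_{0}$ and $\mathcal{D}(L_{\alpha})=\mathcal{D}(L_{0})$, applying $R_{{L}_{0}}(\lambda)$ to both sides yields the implicit relation
\[
y=R_{{L}_{0}}(\lambda)f-\alpha\langle y,v\rangle\,R_{{L}_{0}}(\lambda)v ,
\]
in which $\langle y,v\rangle=\langle R_{{L}_{\alpha}}(\lambda)f,v\rangle$; this is the implicit form of the first assertion.

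To obtain the closed form I would remove the implicit dependence on $y$. Pairing the last relation with $v$ produces the scalar equation
\[
\langle y,v\rangle=\langle R_{{L}_{0}}(\lambda)f,v\rangle-\alpha\langle y,v\rangle\,\langle R_{{L}_{0}}(\lambda)v,v\rangle ,
\]
so that, whenever $1+\alpha\langle R_{{L}_{0}}(\lambda)v,v\rangle\neq 0$,
\[
\langle y,v\rangle=\frac{\langle R_{{L}_{0}}(\lambda)f,v\rangle}{1+\alpha\langle R_{{L}_{0}}(\lambda)v,v\rangle}.
\]
Substituting this value back into the implicit relation gives the explicit resolvent formula. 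To reach the series form I would then insert the eigenfunction expansion $R_{{L}_{0}}(\lambda)f=\sum_{n}f_{n}/(\lambda_{n}-\lambda)$ from the preceding proposition and evaluate the three scalar products termwise in the orthonormal basis $\{u_{n}^{i}\}$: using $\langle f_{n},v\rangle=f_{n}\overline{v}_{n}$ and $\langle v_{n},v\rangle=\lvert v_{n}\rvert^{2}$ (the index $n=0$ carrying its two components $u_{0}^{1},u_{0}^{2}$), the numerator becomes $\sum_{n}f_{n}\overline{v}_{n}/(\lambda_{n}-\lambda)$, the denominator $1+\alpha\sum_{n}\lvert v_{n}\rvert^{2}/(\lambda_{n}-\lambda)$, and the remaining factor $\sum_{n}v_{n}/(\lambda_{n}-\lambda)$.

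The algebra above is routine; the two points I would justify with care are the following. First, the nonvanishing of $1+\alpha\langle R_{{L}_{0}}(\lambda)v,v\rangle$ is exactly the requirement that $\lambda$ belong to the resolvent set of $L_{\alpha}$, and I expect this quantity to agree, up to the factor $\Delta(0,\lambda)$, with the characteristic function $\Delta(\alpha,\lambda)$ of $(\ref{deltaa})$; this is where the description of $\sigma(L_{\alpha})$ through the zeros of $\Delta(\alpha,\lambda)$ enters. Second, because the spectral series is only conditionally convergent, I would verify that the termwise evaluation and the rearrangements are legitimate in $L_{\mathbb{C}}^{2}(0,1)$, paying particular attention to the double term arising from the eigenvalue $\lambda_{0}=0$ of multiplicity two. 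These convergence and resolvent-set issues, rather than the computation itself, are what I expect to be the main obstacle.
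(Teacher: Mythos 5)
Your proposal is correct and is essentially the argument the paper itself relies on: the paper gives no details here, merely citing the analogous lemma in \cite{LY2024}, and that proof is exactly the rank-one (Sherman--Morrison/Krein) reduction you carry out — derive the implicit relation $y=R_{L_0}(\lambda)f-\alpha\langle y,v\rangle R_{L_0}(\lambda)v$, pair with $v$ to solve for $\langle y,v\rangle$, substitute back, and expand in the eigenbasis of $L_0$ with the double term at $\lambda_0=0$. Your derivation also implicitly corrects a small inconsistency in the printed first line of the lemma, which keeps $\langle R_{L_\alpha}(\lambda)f,v\rangle$ in the numerator while already dividing by $1+\alpha\langle R_{L_0}(\lambda)v,v\rangle$; as your scalar equation shows, the numerator should be $\langle R_{L_0}(\lambda)f,v\rangle$ once that denominator is present, consistent with the series on the second line.
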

\begin{proof}
The proof is similar to that of {\rm\cite[lemma 4.1] {LY2024}}.
\end{proof}

\begin{corollary}
\label{residue0}
The residue of the resolvent $R_{{L}_{\alpha}}(\lambda)f$ at the point $\lambda_{0}=0$ is
\begin{equation*}
\text{Res}_{0}R_{{L}_{\alpha}}(\lambda)f=f_{0}^{\mathrm{1}}u_{0}^{\mathrm{1}}+f_{0}^{\mathrm{2}}u_{0}^{\mathrm{2}}
-\frac{f_{0}^{\mathrm{1}}\overline{{v}_{0}^{\mathrm{1}}}+f_{0}^{\mathrm{2}}\overline{{v}_{0}^{\mathrm{2}}}}{v_{0}^{\mathrm{1}}\overline{{v}_{0}^{\mathrm{1}}}+v_{0}^{\mathrm{2}}\overline{{v}_{0}^{\mathrm{2}}}}(v_{0}^{\mathrm{1}}u_{0}^{\mathrm{1}}+v_{0}^{\mathrm{2}}u_{0}^{\mathrm{2}})=f_{0}.
\end{equation*}

(1) If $v_{0}=v_{0}^{\mathrm{1}}u_{0}^{\mathrm{1}}+v_{0}^{\mathrm{2}}u_{0}^{\mathrm{2}}=0$, i.e., $v_{0}^{\mathrm{1}}=v_{0}^{\mathrm{2}}=0$, we get
\begin{equation*}
\text{Res}_{0}R_{{L}_{\alpha}}(\lambda)f=f_{0}^{\mathrm{1}}u_{0}^{\mathrm{1}}+f_{0}^{\mathrm{2}}u_{0}^{\mathrm{2}}\neq0,
\end{equation*}
then $0$ is the eigenvalue of the operator ${L}_{\alpha}$ which has multiplicity $2$.

(2) For $v_{0}=v_{0}^{\mathrm{1}}u_{0}^{\mathrm{1}}+v_{0}^{\mathrm{2}}u_{0}^{\mathrm{2}}\neq 0$, then $\left<f_{0},v_{0}\right>=0$. Let $G_{0}=\{u_{0}^{\mathrm{1}}, u_{0}^{\mathrm{2}}\}$, the vectors $f_{0}$ belong to $G_{0}$ and are orthogonal to $v_{0}$, and thus the totality of $f_{0}$ forms the subspace $G_{0}^{1}$ in $G_{0}$ of dimension $1$, therefore $0$ is a simple eigenvalue of the operator ${L}_{\alpha}$.
\end{corollary}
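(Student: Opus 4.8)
The plan is to read the residue directly off the explicit series representation (\ref{rla}) of $R_{{L}_{\alpha}}(\lambda)$ supplied by Lemma \ref{RLA}, and then to identify that residue with the Riesz projection of $L_{\alpha}$ onto the eigenspace at $0$, whose rank is the multiplicity in question. First I would localize the singularity: by Lemma \ref{l0s} we have $\lambda_{0}=0$, while by Lemma \ref{asymptotic} every other real eigenvalue $\lambda_{n}$ $(n\neq0)$ is nonzero, so in each of the three series occurring in (\ref{rla}) only the $n=0$ summand is singular at $\lambda=0$, every $n\neq0$ summand being holomorphic there. Writing $\langle f_{0},v_{0}\rangle=f_{0}^{\mathrm{1}}\overline{v_{0}^{\mathrm{1}}}+f_{0}^{\mathrm{2}}\overline{v_{0}^{\mathrm{2}}}$ and $\|v_{0}\|^{2}=v_{0}^{\mathrm{1}}\overline{v_{0}^{\mathrm{1}}}+v_{0}^{\mathrm{2}}\overline{v_{0}^{\mathrm{2}}}$ for the $n=0$ scalar contributions, I would split each series into its $n=0$ pole plus a part holomorphic at $0$, giving the principal parts
\[
\sum_{n}\frac{f_{n}}{\lambda_{n}-\lambda}=-\frac{f_{0}}{\lambda}+O(1),\quad \sum_{n}\frac{\langle f_{n},v_{n}\rangle}{\lambda_{n}-\lambda}=-\frac{\langle f_{0},v_{0}\rangle}{\lambda}+O(1),
\]
and likewise for $\sum_{n}\frac{\|v_{n}\|^{2}}{\lambda_{n}-\lambda}$ and $\sum_{n}\frac{v_{n}}{\lambda_{n}-\lambda}$.

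I would then extract the residue at $\lambda=0$. The first series contributes $f_{0}$ (in the sign convention under which the residue of a resolvent is its spectral projection). For the correction term everything hinges on $v_{0}$. If $v_{0}\neq0$, then $\|v_{0}\|^{2}\neq0$, so the denominator $1+\alpha\sum_{n}\frac{\|v_{n}\|^{2}}{\lambda_{n}-\lambda}$ has the simple pole $-\alpha\|v_{0}\|^{2}/\lambda$ matching the simple pole $-\langle f_{0},v_{0}\rangle/\lambda$ of the numerator; clearing the common factor $1/\lambda$ shows their quotient is holomorphic at $0$ with value $\langle f_{0},v_{0}\rangle/(\alpha\|v_{0}\|^{2})$. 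Multiplying by the remaining factor $\sum_{n}\frac{v_{n}}{\lambda_{n}-\lambda}$, whose pole is $-v_{0}/\lambda$, and by the prefactor $-\alpha$, the only surviving singular part of the correction term is $\bigl(\langle f_{0},v_{0}\rangle/\|v_{0}\|^{2}\bigr)v_{0}/\lambda$ (higher Laurent coefficients of the quotient turn this pole into holomorphic pieces and drop out). Collecting terms yields $\text{Res}_{0}R_{{L}_{\alpha}}(\lambda)f=f_{0}-\frac{\langle f_{0},v_{0}\rangle}{\|v_{0}\|^{2}}v_{0}$, which is the stated expression after inserting the coordinate formulas for $f_{0}$, $v_{0}$, $\langle f_{0},v_{0}\rangle$, $\|v_{0}\|^{2}$. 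If instead $v_{0}=0$, then $\langle f_{0},v_{0}\rangle=0$ and $\|v_{0}\|^{2}=0$, so the $n=0$ terms vanish from the numerator, denominator, and last factor; all three series are holomorphic at $0$, the correction term carries no pole, and $\text{Res}_{0}R_{{L}_{\alpha}}(\lambda)f=f_{0}=f_{0}^{\mathrm{1}}u_{0}^{\mathrm{1}}+f_{0}^{\mathrm{2}}u_{0}^{\mathrm{2}}$.

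Finally I would translate the residue into multiplicity. Since $\alpha\in\mathbb{R}$ and $\alpha\langle\cdot,v\rangle v$ is a symmetric rank-one operator, $L_{\alpha}$ is self-adjoint, so $-\text{Res}_{0}R_{{L}_{\alpha}}$ is the orthogonal Riesz projection onto the eigenspace of $L_{\alpha}$ at $0$, and the multiplicity equals the rank of the map $f\mapsto\text{Res}_{0}R_{{L}_{\alpha}}(\lambda)f$. In the case $v_{0}=0$ this map is $f\mapsto f_{0}^{\mathrm{1}}u_{0}^{\mathrm{1}}+f_{0}^{\mathrm{2}}u_{0}^{\mathrm{2}}$, whose range is the two-dimensional space $\mathrm{span}\{u_{0}^{\mathrm{1}},u_{0}^{\mathrm{2}}\}$, so $0$ has multiplicity $2$. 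In the case $v_{0}\neq0$ the residue $f_{0}-\frac{\langle f_{0},v_{0}\rangle}{\|v_{0}\|^{2}}v_{0}$ is the orthogonal projection of $f_{0}\in G_{0}$ off $v_{0}$; it is orthogonal to $v_{0}$ and lies in the one-dimensional subspace $G_{0}^{1}$, and as $f$ ranges over $L_{\mathbb{C}}^{2}(0,1)$ the component $f_{0}$ fills all of $G_{0}$ and its projection fills $G_{0}^{1}$, so $0$ is a simple eigenvalue.

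The main obstacle is the correction term when $v_{0}\neq0$: numerator and denominator both blow up at $\lambda=0$, so the residue cannot be read off termwise, and one must expand to the correct order to confirm that the quotient is finite and to isolate the single surviving pole contributed by $\sum_{n}\frac{v_{n}}{\lambda_{n}-\lambda}$. A secondary point to keep straight is the notational convention in (\ref{rla}): the symbols $f_{n}\overline{v}_{n}$ and $|v_{n}|^{2}$ there denote the scalars $\langle f_{n},v_{n}\rangle$ and $\|v_{n}\|^{2}$, whose $n=0$ values are precisely the two-term sums appearing in the corollary.
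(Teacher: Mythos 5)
Your overall route is the one the paper intends: read the residue off the series form of $R_{L_{\alpha}}(\lambda)$ in Lemma \ref{RLA}, using the convention $\mathrm{Res}_{0}=\lim_{\lambda\to0}(0-\lambda)R_{L_{\alpha}}(\lambda)f$ so that the residue is the Riesz projection, and then equate multiplicity with the rank of that projection. Your treatment of the case $v_{0}\neq0$ is correct: the matching simple poles of numerator and denominator cancel, the quotient tends to $\langle f_{0},v_{0}\rangle/(\alpha\|v_{0}\|^{2})$, and only the pole of $\sum_{n}v_{n}/(\lambda_{n}-\lambda)$ survives, giving $f_{0}-\frac{\langle f_{0},v_{0}\rangle}{\|v_{0}\|^{2}}v_{0}$ and simplicity of $0$.

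There is, however, a genuine gap in your case $v_{0}=0$. From the holomorphy at $\lambda=0$ of the three series $N(\lambda)=\sum_{n}\langle f_{n},v_{n}\rangle/(\lambda_{n}-\lambda)$, $D(\lambda)=1+\alpha\sum_{n}|v_{n}|^{2}/(\lambda_{n}-\lambda)$ and $P(\lambda)=\sum_{n}v_{n}/(\lambda_{n}-\lambda)$ you conclude that ``the correction term carries no pole,'' but that does not follow: the denominator $D(\lambda)=Q(\lambda)$ may itself vanish at $\lambda=0$. When $v_{0}=0$ nothing prevents $0$ from being a (simple) zero of $Q$, i.e.\ $0\in\sigma_{0}\cap\sigma_{2}$ in the notation of (\ref{01})--(\ref{02}); in that subcase $N/D$ has a simple pole at $0$, the correction term contributes the extra rank-one residue $\alpha N(0)P(0)/Q^{\prime}(0)$, and the multiplicity of $0$ is $3$, not $2$ --- exactly the situation the paper records later in Lemma \ref{spectrum}(1)\,\textcircled{2}(d) and in the factorization (\ref{sdf}) with the $\lambda^{3}$ prefactor. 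To make your argument for conclusion (1) sound you must either add the hypothesis $Q(0)\neq0$ (equivalently $0\notin\sigma_{2}$) or split off and analyze the subcase $Q(0)=0$ separately; as written, the step is a non sequitur.
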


\begin{remark}
For $v\in L_{\mathbb{C}}^2(0,1)$, if $v_{n}\neq0$, $n\in \mathbb{Z}\setminus\{0\}$, then $\lambda_{n}\in\sigma\left({L}_{0}\right)$ is not the eigenvalue of the operator ${L}_{\alpha}$.
\end{remark}
\begin{proof}
The deduce is similar to that of {\rm\cite[lemma 4.3] {LY2024}}.
\end{proof}

Naturally, we can divide the set $\sigma\left({L}_{0}\right)$ into two disjoint subsets $\sigma_{0}$ and $\sigma_{1}$, i.e., $\sigma\left({L}_{0}\right)=\sigma_{0}\cup\sigma_{1}$, where
\begin{equation}
\sigma_{0}:=\left\{\lambda_{n}^{0}=\lambda_{n}\in \sigma\left({L}_{0}\right)\vert v_{n}=0\right\},\quad\sigma_{1}:=\left\{\lambda_{n}^{1}=\lambda_{n}\in \sigma\left({L}_{0}\right)\vert v_{n}\neq0\right\}. \label{01}\\
\end{equation}
Formula (\ref{rla}) implies that zeros of the function
\begin{equation}
Q(\lambda):=1+\alpha\sum_{n\in\mathbb{Z}}\frac{\vert v_{n}\vert^{2}}{\lambda_{n}-\lambda}\label{qz}
\end{equation}
belong to the spectrum $\sigma\left({L}_{\alpha}\right)$ of the operator ${L}_{\alpha}$. Besides, the zeros of $Q(z)$ are real, simple and alternate with numbers $\lambda_{n}^{1}\in \sigma_{1}$ (Its proof process is similar to that of {\rm\cite[Proposition 4.4] {LY2024}}).
We denote
\begin{equation}
\sigma_{2}:=\left\{\mu_{n}\left\vert Q(\mu_{n})=0,  n\in\mathbb{Z}\right.\right\}.\label{02}
\end{equation}
A number $\mu_{n}$ may coincide with $\lambda_{n}^{0}$ from $\sigma_{0}$, i.e., $\sigma_{0}\cap\sigma_{2}\neq\emptyset$, and in this case  
the multiplicity of $\lambda_{n}^{0}$ increase by $1$.

\begin{lemma}
\label{spectrum}
{\rm (1)} For $v\in L_{\mathbb{C}}^2(0,1)$, if $0\in\sigma_{0}$, the spectrum of the operator ${L}_{\alpha}$ is
\begin{equation*}
\sigma\left({L}_{\alpha}\right)=\sigma_{0}\cup\sigma_{2}.
\end{equation*}

{\rm \large{\textcircled{\small{1}}}} If $0\in\sigma_{0}$, $0\notin\sigma_{2}$, the spectrum of the operator ${L}_{\alpha}$ is
\begin{equation*}
\sigma\left({L}_{\alpha}\right)=\{\sigma_{0}\setminus\{(\sigma_{0}\cap\sigma_{2})\cup\{0\}\}\}\cup\{\sigma_{2}\setminus(\sigma_{0}\cap\sigma_{2})\}\cup
\{\sigma_{0}\cap\sigma_{2}\}\cup\{0\},
\end{equation*}
besides,

{\rm (a)} the points $\lambda_{n}^{0}$ from $\sigma_{0}\setminus\{(\sigma_{0}\cap\sigma_{2})\cup\{0\}\}$ have multiplicity $1$ and $\sigma_{0}$ is given by $(\ref{01})$;

{\rm (b)} the points $\mu_{n}$ from $\sigma_{2}\setminus(\sigma_{0}\cap\sigma_{2})$ have multiplicity $1$ and $\sigma_{2}$ is given by $(\ref{02})$;

{\rm (c)} the points $\lambda_{n}^{0}$ from $\sigma_{0}\cap\sigma_{2}$, which coincide with zeros $\mu_{n}$ of the function $Q(\lambda)$ $(\ref{qz})$, have multiplicity $2$;

{\rm (d)} the point $0$  has multiplicity $2$.

{\rm \large{\textcircled{\small{2}}}} If $0\in\sigma_{0}$, $0\in\sigma_{2}$, the spectrum of the operator ${L}_{\alpha}$ is
\begin{equation*}
\sigma\left({L}_{\alpha}\right)=\{\sigma_{0}\setminus(\sigma_{0}\cap\sigma_{2})\}\cup\{\sigma_{2}\setminus(\sigma_{0}\cap\sigma_{2})\}\cup
\{(\sigma_{0}\cap\sigma_{2})\setminus\{0\}\}\cup\{0\},
\end{equation*}
besides,

{\rm (a)} the points $\lambda_{n}^{0}$ from $\sigma_{0}\setminus(\sigma_{0}\cap\sigma_{2})$ have multiplicity $1$ and $\sigma_{0}$ is given by $(\ref{01})$;

{\rm (b)} the points $\mu_{n}$ from $\sigma_{2}\setminus(\sigma_{0}\cap\sigma_{2})$ have multiplicity $1$ and $\sigma_{2}$ is given by $(\ref{02})$;

{\rm (c)} the points $\lambda_{n}^{0}$ from $(\sigma_{0}\cap\sigma_{2})\setminus\{0\}$, which coincide with zeros $\mu_{n}$ of the function $Q(\lambda)$ $(\ref{qz})$, have multiplicity $2$;

{\rm (d)} the point $0$  has multiplicity $3$.

{\rm (2)} For $v\in L_{\mathbb{C}}^2(0,1)$, if $0\in\sigma_{1}$, the spectrum of the operator ${L}_{\alpha}$ is
\begin{equation*}
\sigma\left({L}_{\alpha}\right)=\sigma_{0}\cup\sigma_{2}\cup\{0\}.
\end{equation*}

If $0\in \sigma_{1}$, $0\notin\sigma_{2}$, the spectrum of the operator ${L}_{\alpha}$ is
\begin{equation*}
\sigma\left({L}_{\alpha}\right)=\{\sigma_{0}\setminus(\sigma_{0}\cap\sigma_{2})\}\cup\{\sigma_{2}\setminus(\sigma_{0}\cap\sigma_{2})\}\cup
\{\sigma_{0}\cap\sigma_{2}\}\cup\{0\},
\end{equation*}
besides,

{\rm (a)} the points $\lambda_{n}^{0}$ from $\sigma_{0}\setminus(\sigma_{0}\cap\sigma_{2})$ have multiplicity $1$ and $\sigma_{0}$ is given by $(\ref{01})$;

{\rm (b)} the points $\mu_{n}$ from $\sigma_{2}\setminus(\sigma_{0}\cap\sigma_{2})$ have multiplicity $1$ and $\sigma_{2}$ is given by $(\ref{02})$;

{\rm (c)} the points $\lambda_{n}^{0}$ from $\sigma_{0}\cap\sigma_{2}$, which coincide with zeros $\mu_{n}$ of the function $Q(\lambda)$ $(\ref{qz})$, have multiplicity $2$;

{\rm (d)} the point $0$  has multiplicity $1$.
\end{lemma}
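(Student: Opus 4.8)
The plan is to identify $\sigma(L_\alpha)$ with the set of poles of the resolvent $R_{L_\alpha}(\lambda)$ in (\ref{rla}); since $L_\alpha$ is self-adjoint, its eigenvalues are semisimple and the residue of $R_{L_\alpha}(\lambda)$ at an isolated eigenvalue is minus the orthogonal projection onto the eigenspace, so I would compute each multiplicity as the dimension of the range of the residue operator $f\mapsto\mathrm{Res}_{\lambda_\ast}R_{L_\alpha}(\lambda)f$. Formula (\ref{rla}) presents $R_{L_\alpha}(\lambda)$ as the free resolvent $\sum_n f_n/(\lambda_n-\lambda)$ minus a rank-one correction whose scalar prefactor carries the denominator $Q(\lambda)$ of (\ref{qz}). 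A pole can therefore occur only at a point $\lambda_n\in\sigma(L_0)$ (a pole of the first sum) or at a zero $\mu_n\in\sigma_2$ of $Q$ (a pole of the correction), and I would work through the partition $\sigma(L_0)=\sigma_0\cup\sigma_1$ of (\ref{01}) together with $\sigma_2$, leaving the exceptional point $0$—a double eigenvalue of $L_0$ by Lemma \ref{asymptotic}—for last.

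First I would treat the points $\lambda_n$ with $n\neq0$, which simultaneously yields the set identities $\sigma(L_\alpha)=\sigma_0\cup\sigma_2$ in part (1) and $\sigma_0\cup\sigma_2\cup\{0\}$ in part (2). For $\lambda_n\in\sigma_1$ the pole of the first sum is exactly cancelled by the correction, so $\lambda_n\notin\sigma(L_\alpha)$; this is the Remark following Lemma \ref{RLA}, whose computation I would reproduce. For $\lambda_n\in\sigma_0$ one has $v_n=0$, whence the sums $\sum_m f_m\overline{v}_m/(\lambda_m-\lambda)$ and $\sum_m v_m/(\lambda_m-\lambda)$ are regular at $\lambda_n$ and cannot remove the simple pole $f_n^1u_n^1/(\lambda_n-\lambda)$ of the first sum: if $Q(\lambda_n)\neq0$ the residue is $f_n^1u_n^1$ alone and the multiplicity is $1$, whereas if $\lambda_n\in\sigma_0\cap\sigma_2$ the simple zero of $Q$ (its zeros are simple) makes the correction contribute the extra residue direction $\sum_{m\neq n}v_m/(\lambda_m-\lambda_n)$, which is a combination of eigenfunctions $u_m^1$ with $m\neq n$, hence orthogonal to $u_n^1$, so the two contributions are independent and the multiplicity is $2$. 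A zero $\mu_n\in\sigma_2\setminus\sigma_0$ is a pole of the correction only, with multiplicity $1$.

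It remains to fix the multiplicity at $0$, where I would appeal to Corollary \ref{residue0}. If $0\in\sigma_0$, so $v_0=0$, the first sum contributes the full residue $f_0=f_0^1u_0^1+f_0^2u_0^2$ spanning $G_0=\mathrm{span}\{u_0^1,u_0^2\}$, giving base multiplicity $2$; when $0\notin\sigma_2$ the correction is regular and the multiplicity stays $2$ (subcase \textcircled{\small{1}}), while when $0\in\sigma_2$ the factor $1/Q$ has a simple pole at $0$ and, since $v_0=0$ keeps the numerator sums regular there, adds the single direction $\sum_{m\neq0}v_m/\lambda_m$, which lies outside $G_0$ and raises the multiplicity to $3$ (subcase \textcircled{\small{2}}). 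If instead $0\in\sigma_1$, so $v_0\neq0$, Corollary \ref{residue0}(2) gives the residue range as the one-dimensional subspace $G_0^1\subset G_0$ orthogonal to $v_0$, so $0$ is simple; moreover $Q$ then has a pole at $0$, so $0\notin\sigma_2$ automatically, which is why only this subcase occurs in part (2). Assembling the multiplicities over $\sigma_0\setminus\sigma_2$, $\sigma_2\setminus\sigma_0$, $\sigma_0\cap\sigma_2$ and the point $0$ reproduces the three decompositions in (1)\,\textcircled{\small{1}}, (1)\,\textcircled{\small{2}} and (2).

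The step I expect to be hardest is the exact count at $0$, above all the value $3$ in subcase \textcircled{\small{2}}: one must confirm that the extra residue direction $\sum_{m\neq0}v_m/\lambda_m$ furnished by the simple zero of $Q$ is genuinely nonzero and genuinely independent of $G_0$. I would obtain both facts by recognizing this direction as the resolvent of $L_0$ restricted to $G_0^\perp$, evaluated at $0$ and applied to $v$; it is nonzero because $v\neq0$ lies in $G_0^\perp$ (as $v_0=0$), where that reduced resolvent is injective, and it lies in $G_0^\perp$ for the same reason, which is exactly the independence needed. The identical argument secures the nonvanishing and independence at the coincidence points $\sigma_0\cap\sigma_2$. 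Everything else—the cancellation at $\sigma_1$ points and the set-theoretic assembly of the four families into the stated unions—is routine once the residue analysis of (\ref{rla}) is carried out.
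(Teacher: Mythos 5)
Your proposal is correct and follows exactly the route the paper intends: the paper gives no separate proof of Lemma \ref{spectrum}, but the residue analysis of formula (\ref{rla}) that you carry out — cancellation of the pole at points of $\sigma_{1}$, the rank of the residue at points of $\sigma_{0}$, $\sigma_{2}$ and $\sigma_{0}\cap\sigma_{2}$, and Corollary \ref{residue0} for the point $0$ — is precisely the material the paper assembles immediately before stating the lemma. Your extra care at the point $0$ (the independence of the direction $\sum_{m\neq0}v_{m}/\lambda_{m}$ from $G_{0}$, and the observation that $0\in\sigma_{1}$ forces $0\notin\sigma_{2}$) makes explicit what the paper leaves implicit.
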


\begin{remark}
\label{finite}
The operator ${L}_{\alpha}$ only has a finite number of non-simple eigenvalues.
\end{remark}
\begin{proof}
Its derivation is similar to that of {\rm\cite[Theorem 4.5] {LY2024}}.
\end{proof}

\subsection{The eigenfunctions of the operator ${L}_{\alpha}$}\

In this subsection, we show the expressions for the eigenfunctions of the operator ${L}_{\alpha}$.
\begin{proposition}
{\rm(1)} Eigenfunctions ${u}_{n}^{{i}}(\alpha,x)$ of the operator ${L}_{\alpha}$ with respect to $\lambda_{n}\in \sigma_{0}$ are given by
\begin{eqnarray*}
{u}_{n}^{{i}}(\alpha,x)={u}_{n}^{{i}},\quad {i}=1,2.
\end{eqnarray*}
{\rm(2)} To each $\mu_{p}\in \sigma_{2}$, the corresponding eigenfunction is
\begin{equation*}
\widetilde{u}_{p}(\alpha,x)=\frac{1}{\sqrt {G^{\prime}(\mu_{p})}}\sum_{\lambda_{n}^{1}\in \sigma_{1}}\frac{v_{n}}{\lambda_{n}^{1}-\mu_{p}}.
\end{equation*}
\end{proposition}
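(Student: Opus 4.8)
The plan is to handle the two parts separately, using throughout that $L_{\alpha}$ is the rank-one perturbation $L_{\alpha}y=L_{0}y+\alpha\left<y,v\right>v$ with $\mathcal{D}(L_{\alpha})=\mathcal{D}(L_{0})$. For part (1) I would verify the claim directly. If $\lambda_{n}\in\sigma_{0}$ then $v_{n}=0$ by $(\ref{01})$, so $v_{n}^{i}=\left<v,u_{n}^{i}\right>=0$ and hence $\left<u_{n}^{i},v\right>=\overline{v_{n}^{i}}=0$ for the relevant $i$. Since $u_{n}^{i}\in\mathcal{D}(L_{0})=\mathcal{D}(L_{\alpha})$, applying $L_{\alpha}$ gives $L_{\alpha}u_{n}^{i}=L_{0}u_{n}^{i}+\alpha\left<u_{n}^{i},v\right>v=\lambda_{n}u_{n}^{i}$, so each $u_{n}^{i}$ remains an eigenfunction of $L_{\alpha}$ with the same eigenvalue, i.e. $u_{n}^{i}(\alpha,x)=u_{n}^{i}$. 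This part is essentially bookkeeping of the index $i$ (two functions when $n=0$, one otherwise) and presents no real obstacle.

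For part (2) I would read off the eigenfunction from the resolvent $(\ref{rla})$ by a residue computation at $\mu_{p}\in\sigma_{2}$. By $(\ref{02})$, $\mu_{p}$ is a zero of $Q$ in $(\ref{qz})$, and these zeros are simple, so the second term of $(\ref{rla})$ carries a simple pole at $\mu_{p}$ while the first term $R_{L_{0}}(\lambda)f$ is regular there (generically $\mu_{p}\notin\sigma(L_{0})$). Writing $Q(\lambda)=Q^{\prime}(\mu_{p})(\lambda-\mu_{p})+O((\lambda-\mu_{p})^{2})$ and using $v_{n}=0$ on $\sigma_{0}$ to collapse every sum onto $\sigma_{1}$, I expect
\[
\text{Res}_{\mu_{p}}R_{L_{\alpha}}(\lambda)f=-\frac{\alpha}{Q^{\prime}(\mu_{p})}\left<f,\phi_{p}\right>\phi_{p},\qquad
\phi_{p}:=\sum_{\lambda_{n}^{1}\in\sigma_{1}}\frac{v_{n}}{\lambda_{n}^{1}-\mu_{p}},
\]
where I use that $f_{n}$ is the projection of $f$ (so the scalar numerator $\sum_{n}f_{n}\overline{v}_{n}/(\lambda_{n}-\mu_{p})$ equals $\sum_{\lambda_{n}^{1}\in\sigma_{1}}\left<f,v_{n}\right>/(\lambda_{n}^{1}-\mu_{p})$) together with the orthonormality of $\{u_{n}^{i}\}$ and the reality of $\lambda_{n}^{1},\mu_{p}$ to rewrite this as $\left<f,\phi_{p}\right>$. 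Since the residue of $R_{L_{\alpha}}$ at a simple eigenvalue equals $-P_{p}$, the orthogonal projection onto the one-dimensional eigenspace, this identifies the eigenspace at $\mu_{p}$ as the line spanned by $\phi_{p}$.

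It then remains to normalize. Because the $v_{n}$ are projections onto mutually orthogonal eigenspaces and all $\lambda_{n}^{1},\mu_{p}$ are real, orthonormality gives $\|\phi_{p}\|^{2}=\sum_{\lambda_{n}^{1}\in\sigma_{1}}|v_{n}|^{2}/(\lambda_{n}^{1}-\mu_{p})^{2}=G^{\prime}(\mu_{p})=\tfrac{1}{\alpha}Q^{\prime}(\mu_{p})$; hence $\widetilde{u}_{p}(\alpha,x)=\phi_{p}/\sqrt{G^{\prime}(\mu_{p})}$ has unit norm, which is the asserted formula, and the residue prefactor $\alpha/Q^{\prime}(\mu_{p})$ indeed matches $1/\|\phi_{p}\|^{2}$, confirming internal consistency. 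I expect the residue calculation to be the main obstacle: one must reduce the double sum in $(\ref{rla})$ to the rank-one object $\left<f,\phi_{p}\right>\phi_{p}$ with the correct sign and factor $\alpha$, and then verify the normalization identity $\|\phi_{p}\|^{2}=G^{\prime}(\mu_{p})$. A secondary point to address is the degenerate case $\mu_{p}\in\sigma_{0}\cap\sigma_{2}$ (multiplicity two by Lemma $\ref{spectrum}$), where $\phi_{p}$ supplies one eigenfunction and the corresponding $u_{n}^{i}$ from part (1) supplies the other; the formula here gives the eigenfunction attached to $\mu_{p}$ itself.
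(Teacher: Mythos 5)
Your argument is correct and follows essentially the same route as the paper: part (1) is the same direct verification using $v_{n}=0$ for $\lambda_{n}\in\sigma_{0}$, and part (2) rests on the same residue computation of $R_{L_{\alpha}}(\lambda)f$ at $\mu_{p}$ via the resolvent formula and the simplicity of the zeros of $Q$. The only (harmless) divergence is at the end: the paper confirms the candidate by directly checking $L_{\alpha}\widetilde{u}_{p}=\mu_{p}\widetilde{u}_{p}$ using $Q(\mu_{p})=0$, whereas you invoke the identification of the residue with the spectral projection together with the normalization $\Vert\phi_{p}\Vert^{2}=G^{\prime}(\mu_{p})=Q^{\prime}(\mu_{p})/\alpha$; both are sound.
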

\begin{proof}
${\rm(i)}$ Owing to $\lambda_{n}\in \sigma_{0}$ and $v_{n}^{{i}}=\left<v,{u}_{n}^{{i}}\right>=0$, it means that ${u}_{n}^{{i}}$ satisfies the equation
\begin{equation*}
{L}_{\alpha}{u}_{n}^{{i}}={L}_{0}{u}_{n}^{{i}}+\alpha\left<{u}_{n}^{{i}},v\right>v=\lambda_{n}{u}_{n}^{{i}}
\end{equation*}
and the boundary conditions (\ref{fgh}). So, we have ${u}_{n}^{{i}}(\alpha,x)={u}_{n}^{{i}}$.

${\rm(ii)}$ The residue of the resolvent $R_{{L}_{\alpha}}(\lambda)f$ at $\mu_{p}\in \sigma_{2}$ is
\begin{eqnarray*}
\text{Res}_{\mu_{p}}R_{{L}_{\alpha}}(\lambda)f\!\!&=&\!\!\displaystyle\lim_{\lambda\rightarrow \mu_{p}}(\mu_{p}-\lambda)R_{{L}_{\alpha}}(\lambda)f\\
\!\!&=&\!\!\frac{\alpha}{Q^{\prime}(\mu_{p})}\sum_{\lambda_{n}^{1}\in \sigma_{1}}\frac{v_{n}}{\lambda_{n}^{1}-\mu_{p}}\left(\sum_{\lambda_{n}^{1}\in \sigma_{1}}\frac{f_{n}\overline{v}_{n}}{\lambda_{n}^{1}-\mu_{p}}\right).
\end{eqnarray*}
Due to $Q(\lambda)$ (\ref{qz}) and $G(\lambda)=\sum\limits_{n\in\mathbb{Z}}\frac{\vert v_{n}\vert^{2}}{\lambda_{n}-\lambda}$ , we see
\begin{equation*}
Q^{\prime}(\mu_{p})=\alpha G^{\prime}(\mu_{p}), \quad G^{\prime}(\mu_{p})>0.
\end{equation*}
Set
\begin{equation*}
\widetilde{u}_{p}=\frac{1}{\sqrt {G^{\prime}(\mu_{p})}}\sum_{\lambda_{n}^{1}\in \sigma_{1}}\frac{v_{n}}{\lambda_{n}^{1}-\mu_{p}},
\end{equation*}
we find $\Vert \widetilde{u}_{p}\Vert_{L^{2}}=1$, and  $\widetilde{u}_{p}$ satisfies
\begin{eqnarray*}
{L}_{\alpha}\widetilde{u}_{p}&=&{L}_{0}\widetilde{u}_{p}+\alpha\left<\widetilde{u}_{p},v\right>v\\
&=&\frac{1}{\sqrt {G^{\prime}(\mu_{p})}}\sum_{\lambda_{n}^{1}\in \sigma_{1}}\frac{v_{n}}{\lambda_{n}^{1}-\mu_{p}}\lambda_{n}^{1}+\frac{\alpha}{\sqrt {G^{\prime}(\mu_{p})}}\sum_{\lambda_{n}^{1}\in \sigma_{1}}\frac{\vert v_{n}\vert^{2}}{\lambda_{n}^{1}-\mu_{p}}v\\
&=&\frac{1}{\sqrt {G^{\prime}(\mu_{p})}}\sum_{\lambda_{n}^{1}\in \sigma_{1}}\frac{v_{n}}{\lambda_{n}^{1}-\mu_{p}}(\lambda_{n}^{1}-\mu_{p}+\mu_{p})+\frac{\alpha}{\sqrt {G^{\prime}(\mu_{p})}}\sum_{\lambda_{n}^{1}\in \sigma_{1}}\frac{\vert v_{n}\vert^{2}}{\lambda_{n}^{1}-\mu_{p}}v\\
&=&\frac{1}{\sqrt {G^{\prime}(\mu_{p})}}\sum_{\lambda_{n}^{1}\in \sigma_{1}}\left(v_{n}+\frac{v_{n}\mu_{p}}{\lambda_{n}^{1}-\mu_{p}}\right)+\frac{\alpha}{\sqrt {G^{\prime}(\mu_{p})}}\sum_{\lambda_{n}^{1}\in \sigma_{1}}\frac{\vert v_{n}\vert^{2}}{\lambda_{n}^{1}-\mu_{p}}v\\
&=&\frac{1}{\sqrt {G^{\prime}(\mu_{p})}}\left[\sum_{\lambda_{n}\in \sigma(L_{0})}v_{n}+{\alpha}\sum_{\lambda_{n}^{1}\in \sigma_{1}}\frac{\vert v_{n}\vert^{2}}{\lambda_{n}^{1}-\mu_{p}}v\right]+\frac{\mu_{p}}{\sqrt {G^{\prime}(\mu_{p})}}\sum_{\lambda_{n}^{1}\in \sigma_{1}}\frac{v_{n}}{\lambda_{n}^{1}-\mu_{p}}\\
&=&\frac{1}{\sqrt {G^{\prime}(\mu_{p})}}\left[1+{\alpha}\sum_{\lambda_{n}^{1}\in \sigma_{1}}\frac{\vert v_{n}\vert^{2}}{\lambda_{n}^{1}-\mu_{p}}\right]v+{\mu_{p}}\widetilde{u}_{p}\\
&=&{\mu_{p}}\widetilde{u}_{p}
\end{eqnarray*}
and the boundary conditions (\ref{fgh}). Then, for $\mu_{p}\in \sigma_{2}$,  $\widetilde{u}_{p}(\alpha,x)=\widetilde{u}_{p}$ is an eigenfunction of ${L}_{\alpha}$.
\end{proof}

\begin{remark}
For $\lambda_{n}\in\sigma_{0}\cap\sigma_{2}$, the eigenfunctions ${u}_{n}^{{i}}(\alpha,x)$ and $\widetilde{u}_{n}(\alpha,x)$ are linearly independent.
\end{remark}

\section{The inverse problem}

In this section, we study the inverse problems for the operator ${L}_{\alpha}$. We first give the Ambarzumyan theorem as follows.
\begin{theorem}
\label{A}
For $v\in L_{\mathbb{C}}^2(0,1)$, if $\sigma\left({L}_{0}\right)=\sigma\left({L}_{\alpha}\right)$, then $v=0$.
\end{theorem}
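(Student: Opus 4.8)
The plan is to read the hypothesis $\sigma(L_0)=\sigma(L_\alpha)$ as an equality of spectra \emph{counted with multiplicity}, and to exploit the anomalous double eigenvalue $\lambda_0=0$ of $L_0$ furnished by Lemma \ref{asymptotic}. (I also take $\alpha\neq0$, since for $\alpha=0$ one has $L_\alpha=L_0$ identically and nothing can be said about $v$.) The underlying idea is that the rank-one perturbation can preserve the entire spectrum, multiplicities included, only if every Fourier datum of $v$ against the eigenbasis of $L_0$ vanishes.

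First I would determine the multiplicity of $0$ as an eigenvalue of $L_\alpha$. Since $0$ has multiplicity $2$ in $\sigma(L_0)$, it must again have multiplicity $2$ in $\sigma(L_\alpha)$, and I would compare this against Corollary \ref{residue0} and Lemma \ref{spectrum}. If $v_0\neq0$, i.e. $0\in\sigma_1$, then Lemma \ref{spectrum}(2) makes $0$ a \emph{simple} eigenvalue of $L_\alpha$, contradicting multiplicity $2$. If instead $v_0=0$ but $0\in\sigma_2$ (that is, $Q(0)=0$), then the subcase $0\in\sigma_0,\ 0\in\sigma_2$ of Lemma \ref{spectrum}(1) promotes $0$ to multiplicity $3$, again a contradiction. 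The only admissible situation is therefore $v_0=0$ together with $0\notin\sigma_2$, which by the subcase $0\in\sigma_0,\ 0\notin\sigma_2$ of Lemma \ref{spectrum}(1) does restore multiplicity $2$. In particular the component $v_0=v_0^{1}u_0^{1}+v_0^{2}u_0^{2}$ of $v$ in the eigenspace at $0$ vanishes.

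Next I would eliminate every remaining Fourier coefficient. Suppose $v_n\neq0$ for some $n\neq0$, so that $\lambda_n\in\sigma_1$. The remark preceding the decomposition $\sigma(L_0)=\sigma_0\cup\sigma_1$ shows that such a $\lambda_n$ is not an eigenvalue of $L_\alpha$ --- intuitively, the pole of the first sum in (\ref{rla}) at $\lambda_n$ is annihilated once $Q(\lambda)$ acquires its own pole at $\lambda_n$ for $v_n\neq0$. Thus $\lambda_n\in\sigma(L_0)\setminus\sigma(L_\alpha)$, contradicting the spectral identity, so $v_n=0$ for all $n\neq0$ as well. Combined with $v_0=0$, every coefficient $v_n^i=\langle v,u_n^i\rangle$ vanishes; since $\{u_n^i\}$ is an orthonormal basis of $L^2_{\mathbb{C}}(0,1)$, completeness yields $v=0$. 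I expect the genuinely delicate point to be the multiplicity bookkeeping at $0$: this is precisely where $L_\alpha$ departs from the classical rank-one picture, since the double eigenvalue of $L_0$ is what permits a triple eigenvalue of $L_\alpha$, and the full force of the hypothesis is needed to forbid that degeneration.
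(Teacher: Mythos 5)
Your proposal is correct and follows essentially the same route as the paper: decompose $\sigma(L_0)=\sigma_0\cup\sigma_1$, observe that any $\lambda_n\in\sigma_1$ would drop out of $\sigma(L_\alpha)$ (or, at $\lambda=0$, change multiplicity), and conclude $v_n=0$ for all $n$ by completeness of the eigenbasis $\{u_n^i\}$. Your case analysis at $\lambda_0=0$, using Corollary \ref{residue0} and Lemma \ref{spectrum} to rule out multiplicities $1$ and $3$, is in fact more explicit than the paper's one-line parenthetical remark that $0\in\sigma_0$ with multiplicity $2$, and correctly isolates the one place where the hypothesis must be read with multiplicity.
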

\begin{proof}
Since the function $v\in L_{\mathbb{C}}^2(0,1)$ can be expressed by its Fourier series, i.e.,
\begin{equation}
v(x)=\sum\limits_{n\in \mathbb{Z}}v_{n}.\label{vu}
\end{equation}
From Lemma \ref{spectrum}, $\sigma\left({L}_{0}\right)=\sigma_{0}\cup\sigma_{1}$ and $\sigma\left({L}_{0}\right)=\sigma\left({L}_{\alpha}\right)$, we get $\sigma\left({L}_{0}\right)=\sigma_{0}=\sigma\left({L}_{\alpha}\right)$ ($0\in \sigma_{0}$ which has multiplicity 2). Then, on account of the definition of $\sigma_{0}$ in $(\ref{01})$, one has ${v_{n}}=0$ for $n\in \mathbb{Z}$. Therefore, the Fourier series  (\ref{vu}) implies $v=0$.
\end{proof}

Next, we investigate the spectral data for reconstructing the non-local potential $\{\alpha, v\}$.
\begin{lemma}
The function $Q(\lambda)$ $(\ref{qz})$ satisfies the following identity
\begin{equation}
Q(\lambda)=\frac{\Delta(\alpha, \lambda)}{\Delta(0, \lambda)}.\label{ert}
\end{equation}
\end{lemma}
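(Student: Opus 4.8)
The plan is to convert the claimed identity \eqref{ert} into a single scalar relation and then to verify that relation by computing $\langle R_{L_0}(\lambda)v,v\rangle$ directly from the kernel of Lemma \ref{rel0}. First I would read off the meaning of the denominator of the resolvent formula \eqref{rla} in Lemma \ref{RLA}. Using the spectral representation of $R_{L_0}(\lambda)$ together with $v_n^{i}=\langle v,u_n^{i}\rangle$ gives $\langle R_{L_0}(\lambda)v,v\rangle=\sum_{n\in\mathbb{Z}}|v_n|^2/(\lambda_n-\lambda)$, so that by \eqref{qz} one has $Q(\lambda)=1+\alpha\langle R_{L_0}(\lambda)v,v\rangle$. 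Multiplying by $\Delta(0,\lambda)$ and invoking the decomposition \eqref{deltaa}, the assertion $Q(\lambda)=\Delta(\alpha,\lambda)/\Delta(0,\lambda)$ becomes equivalent to the single identity
\[
\Delta(0,\lambda)\,\langle R_{L_0}(\lambda)v,v\rangle=\mathrm{i}\,[F(\lambda)-F^{*}(\lambda)],
\]
with $F$ as in \eqref{lanm}. All the remaining work is concentrated in this relation.

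Second, I would compute the left-hand side explicitly. Taking $f=v$ in Lemma \ref{rel0}, multiplying by $\overline{v}(x)$ and integrating in $x$ over $(0,1)$ expresses $\Delta(0,\lambda)\langle R_{L_0}(\lambda)v,v\rangle$ as a sum of double integrals of the listed kernels against $\overline{v}(x)v(t)$. The terms that occur identically in both the $\int_0^x$ part and the $\int_x^1$ part recombine into integrals over the full square $(0,1)^2$; each such term is separable in $x$ and $t$, hence factors into products of the Fourier transforms \eqref{v}, \eqref{vstar}, where the substitution $s=1-t$ turns the factors $y_j(1-t,\lambda)$ into the reflected transforms $\widetilde{\nu}_j(-\lambda)$ exactly as in the proof of Lemma \ref{wcsd}. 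These contributions already produce the term $\widetilde{y}_1^{*}(\lambda)\widetilde{\nu}_1(-\lambda)$ of $F$, together with some auxiliary products of $\widetilde{y}_j$ and $\widetilde{y}_j^{*}$ that must later cancel.

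The main obstacle is the genuinely triangular terms, i.e.\ those surviving on only one of $\{t<x\}$ or $\{t>x\}$, since a partial integral $\int_0^x$ does not factor. The device is the addition formula of Lemma \ref{entire}(6): writing $y_3(x-t,\lambda)=y_1(x,\lambda)y_3(-t,\lambda)+y_2(x,\lambda)y_2(-t,\lambda)+y_3(x,\lambda)y_1(-t,\lambda)$ lets me trade the nonseparable product $y_3(x,\lambda)y_1(-t,\lambda)$ for $y_3(x-t,\lambda)$ minus separable products. The convolution piece then collapses to $m(\lambda)$ by its definition \eqref{mlambda}, supplying the term $\mathrm{i}\lambda y_2(1,\lambda)m(\lambda)$ of $F$, while the leftover separable products, completed to full-square integrals, are rewritten through Lemma \ref{mmstar} so as to generate $m^{*}(\lambda)$ and to cancel the auxiliary products from the previous step.

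Finally, careful sign-tracking (the function $s$ is odd, whence $y_2(-1,\lambda)=-y_2(1,\lambda)$) collects the remainder into exactly $-\widetilde{y}_1(\lambda)\widetilde{\nu}_1^{*}(-\lambda)-\mathrm{i}\lambda y_2(-1,\lambda)m^{*}(\lambda)=-F^{*}(\lambda)$. Assembling the two groups of terms yields $\Delta(0,\lambda)\langle R_{L_0}(\lambda)v,v\rangle=\mathrm{i}[F(\lambda)-F^{*}(\lambda)]$, which combined with \eqref{deltaa} gives $\Delta(\alpha,\lambda)=\Delta(0,\lambda)Q(\lambda)$ and hence \eqref{ert}. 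I expect the bookkeeping of the triangular integrals and the verification that the auxiliary products cancel to be the delicate part; the reassuring structural check is that this computation runs exactly parallel to the evaluation of the $4\times4$ determinant already carried out for $\Delta(\alpha,\lambda)$ in \eqref{deltaa}.
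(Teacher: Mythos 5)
Your proposal is correct and follows essentially the same route as the paper: reduce \eqref{ert} via $Q(\lambda)=1+\alpha\left<R_{L_{0}}(\lambda)v,v\right>$ and the decomposition \eqref{deltaa} to the identity $\Delta(0,\lambda)\left<R_{L_{0}}(\lambda)v,v\right>=\mathrm{i}[F(\lambda)-F^{*}(\lambda)]$, and then verify it by evaluating the kernel of Lemma \ref{rel0} at $f=v$, with the separable pieces handled through the reflected transforms as in Lemma \ref{wcsd} and the triangular piece collapsing to $m(\lambda)$ and $m^{*}(\lambda)$ via Lemma \ref{mmstar}. This is precisely the computation carried out in the paper's proof.
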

\begin{proof}
From (\ref{qz}) and Lemma \ref{RLA}, we obtain
\begin{equation*}
Q(\lambda)=1+\alpha\left<R_{{L}_{0}}(\lambda)v,v\right>,
\end{equation*}
then using expression of $R_{{L}_{0}}(\lambda)$ in Lemma \ref{rel0}, $\Delta(\alpha, \lambda)$ in (\ref{deltaa}) and ${F}(\lambda)$ in (\ref{lanm}), we calculate
\begin{eqnarray*}
\left<R_{{L}_{0}}(\lambda)v,v\right>
\!\!&=&\!\!\frac{\mathrm{i}}{\Delta(0,\lambda)}\left\{\bigg<\int_{0}^{x}\mathrm{i}\lambda[y_{3}(x,\lambda)y_{1}(-t,\lambda)y_{2}(-1,\lambda)+y_{2}(1,\lambda)\right.\\
\!\!&&\!\!\left.y_{3}(x-t,\lambda)\right]v(t)\mathrm{d}t+\int_{0}^{1}\left[-y_{1}(x,\lambda)y_{1}(-t,\lambda)y_{1}(-1,\lambda)\right.\\
\!\!&&\!\!\left.+\mathrm{i}\lambda y_{2}(x,\lambda)y_{1}(-t,\lambda)y_{3}(-1,\lambda)+y_{1}(1-t,\lambda)y_{1}(x,\lambda)\right]v(t)\mathrm{d}t\\
\!\!&&\!\!\left.+\int_{x}^{1}(-\mathrm{i}\lambda) \left[y_{1}(x,\lambda)y_{3}(-t,\lambda)y_{2}(-1,\lambda)+y_{2}(x,\lambda)y_{2}(-t,\lambda)\right.\right.\\
\!\!&&\!\!\left.\left.y_{2}(-1,\lambda)\right]v(t)\mathrm{d}t,v\bigg>\right\}\\
\!\!&=&\!\!\frac{\mathrm{i}}{\Delta(0,\lambda)}\left[\mathrm{i}\lambda y_{2}(1,\lambda)m(\lambda)+\widetilde{\nu}_{1}(-\lambda)\widetilde{y}_{1}^{*}(\lambda)-\widetilde{y}_{1}(\lambda)\widetilde{\nu}_{1}^{*}(-\lambda)\right.\\
\!\!&&\!\!\left.-\mathrm{i}\lambda y_{2}(-1,\lambda)m^{*}(\lambda)]\right.\\
\!\!&=&\!\!\frac{\mathrm{i}}{\Delta(0,\lambda)}[{F}(\lambda)-{F}^{*}(\lambda)]\\
\!\!&=&\!\!\frac{\Delta(\alpha, \lambda)-\Delta(0, \lambda)}{\alpha\Delta(0, \lambda)}.
\end{eqnarray*}
Thus,
\begin{equation*}
Q(\lambda)=1+\alpha\frac{\Delta(\alpha, \lambda)-\Delta(0, \lambda)}{\alpha\Delta(0, \lambda)}=\frac{\Delta(\alpha, \lambda)}{\Delta(0, \lambda)}.
\end{equation*}
\end{proof}

\begin{lemma}
The multiplicative expansions of characteristic functions $\Delta(0, \lambda)$ and $\Delta(\alpha, \lambda)$ are
\begin{equation}
\Delta(0, \lambda)=2\sum_{n=1}^{\infty}\frac{(-1) ^{n}\lambda^{2n}}{(6n-2)!}=-\frac{2}{4!}\lambda^{2}\prod _{n\in\mathbb{N}^{*}, \lambda_{n}(0)\neq0}\left(1-\frac{\lambda^{2}}{\lambda_{n}^{2}(0)}\right), \label{yuh}
\end{equation}
\begin{equation}
\Delta(\alpha, \lambda)=\left\{
\begin{aligned}
b_{2}\lambda^{2}\prod _{n\in \mathbb{Z}, \lambda_{n}(\alpha)\neq0}\left(1-\frac{\lambda}{\lambda_{n}(\alpha)}\right), & & {for\quad0\in\sigma_{0}, \quad0\notin\sigma_{2}},\\
b_{3}\lambda^{3}\prod _{n\in \mathbb{Z}, \lambda_{n}(\alpha)\neq0}\left(1-\frac{\lambda}{\lambda_{n}(\alpha)}\right), & & {for\quad0\in\sigma_{0}, \quad0\in\sigma_{2}},\\
b_{1}\lambda \prod _{n\in \mathbb{Z}, \lambda_{n}(\alpha)\neq0}\left(1-\frac{\lambda}{\lambda_{n}(\alpha)}\right), & & {for\quad0\in\sigma_{1}, \quad0\notin\sigma_{2}},\\
\end{aligned}
\right.\label{sdf}
\end{equation}
respectively, where
\begin{eqnarray}
b_{1}\!\!&=&\!\!\alpha\left[\int_{0}^{1}\int_{0}^{1}\left(\frac{t^3}{3!}+\frac{(1-x)^3}{3!}+\frac{x^3}{3!}+\frac{(1-t)^3}{3!}\right){v}(t)\overline{v}(x)\mathrm{d}t\mathrm{d}x\right.\notag\\
\!\!&&\!\!\left.-\int_{0}^{1}\int_{0}^{x}(x-t)^2{v}(t)\overline{v}(x)\mathrm{d}t\mathrm{d}x\right],\label{b1}
\end{eqnarray}
\begin{eqnarray}
b_{2}\!\!&=&\!\!-\frac{2}{4!}+\mathrm{i}\alpha\left[\int_{0}^{1}\int_{0}^{1}\left(\frac{t^6+(1-x)^6}{6!}-\frac{x^6+(1-t)^6}{6!}+\frac{t^3}{3!}\frac{(1-x)^3}{3!}\right.\right.\notag\\
\!\!&&\!\!\left.\left.-\frac{x^3}{3!}\frac{(1-t)^3}{3!}\right){v}(t)\overline{v}(x)\mathrm{d}t\mathrm{d}x+\int_{0}^{1}\int_{0}^{x}\frac{2}{5!}(x-t)^5{v}(t)\overline{v}(x)\mathrm{d}t\mathrm{d}x\right],\label{er}
\end{eqnarray}
\begin{eqnarray}
b_{3}\!\!&=&\!\!\alpha\left[\int_{0}^{1}\int_{0}^{x}\bigg(\frac{2(x-t)^8}{8!}+\frac{(x-t)^2}{7!}\bigg){v}(t)\overline{v}(x)\mathrm{d}t\mathrm{d}x\right.\notag\\
\!\!&&\!\!\left.-\int_{0}^{1}\int_{0}^{1}\left(\frac{x^9+(1-t)^9}{9!}+\frac{t^9+(1-x)^9}{9!}+\frac{x^6}{6!}\frac{(1-t)^3}{3!}\right.\right.\notag\\
\!\!&&\!\!\left.\left.+\frac{x^3}{3!}\frac{(1-t)^6}{6!}+\frac{t^6}{6!}\frac{(1-x)^3}{3!}+\frac{t^3}{3!}\frac{(1-x)^6}{6!}\right){v}(t)\overline{v}(x)\mathrm{d}t\mathrm{d}x\right],\label{ty}
\end{eqnarray}
and $\lambda_{n}(\alpha)$ represents the zero of $\Delta(\alpha, \lambda)$.
\end{lemma}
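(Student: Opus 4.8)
The plan is to treat the two displays separately: first the series-and-product form of $\Delta(0,\lambda)$, then the three Hadamard products for $\Delta(\alpha,\lambda)$.

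For the power series of $\Delta(0,\lambda)$ I would start from the closed expression $\Delta(0,\lambda)=-\mathrm{i}\lambda[y_{2}(-1,\lambda)+y_{2}(1,\lambda)]$ of Lemma \ref{l0s} and insert $y_{2}(x,\lambda)=(\mathrm{i}\sqrt[3]{\lambda})^{-1}s(\mathrm{i}\sqrt[3]{\lambda}x)$ together with the Taylor series $s(z)=\sum_{n\ge0}z^{3n+1}/(3n+1)!$ of Lemma \ref{entire}(9). Writing $k=\sqrt[3]{\lambda}$, the sum $s(\mathrm{i}k)+s(-\mathrm{i}k)$ annihilates every term for which $3n+1$ is odd and doubles the remaining ones, leaving only the indices $n=2m+1$; since $\mathrm{i}^{6m+4}=(-1)^{m}$ this equals $2\sum_{m\ge0}(-1)^{m}k^{6m+4}/(6m+4)!$. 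Because $-\mathrm{i}\lambda\,(\mathrm{i}k)^{-1}=-k^{2}$ and $k^{6m+6}=\lambda^{2m+2}$, multiplication by $-k^{2}$ followed by the reindexing $n=m+1$ yields precisely $2\sum_{n\ge1}(-1)^{n}\lambda^{2n}/(6n-2)!$. This is routine once the parities and the powers of $\mathrm{i}$ are tracked.

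For the product representations I would appeal to the Hadamard factorization theorem. Both $\Delta(0,\lambda)$ and $\Delta(\alpha,\lambda)=\Delta(0,\lambda)+\mathrm{i}\alpha[F(\lambda)-F^{*}(\lambda)]$ are entire of exponential type in $k=\sqrt[3]{\lambda}$, hence of order $1/3$ and genus zero in $\lambda$; since the zeros grow like $\lambda_{n}\sim(2\pi n)^{3}$ (Lemma \ref{asymptotic}) one has $\sum|\lambda_{n}|^{-1}<\infty$, so the canonical products $\prod(1-\lambda/\lambda_{n})$ converge. For $\Delta(0,\lambda)$ the double zero at the origin factors as $\lambda^{2}$ and, because the nonzero zeros occur in pairs $\pm\lambda_{n}(0)$ (Lemma \ref{l0s}), they group into $\prod_{n\in\mathbb{N}^{*}}(1-\lambda^{2}/\lambda_{n}^{2}(0))$; the leading constant is fixed by $\lim_{\lambda\to0}\Delta(0,\lambda)/\lambda^{2}=-2/4!$ from the series just obtained. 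For $\Delta(\alpha,\lambda)$ the nonzero zeros are the nonzero eigenvalues $\lambda_{n}(\alpha)$ of $L_{\alpha}$, while the order of the zero at $\lambda=0$ is read off from Lemma \ref{spectrum} and Corollary \ref{residue0}: it is $2$ when $0\in\sigma_{0},\,0\notin\sigma_{2}$, it is $3$ when $0\in\sigma_{0},\,0\in\sigma_{2}$, and it is $1$ when $0\in\sigma_{1},\,0\notin\sigma_{2}$, producing the prefactors $\lambda^{2}$, $\lambda^{3}$, $\lambda$ respectively.

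It remains to identify the leading coefficients $b_{1},b_{2},b_{3}$, which are the Taylor coefficients $\lim_{\lambda\to0}\Delta(\alpha,\lambda)/\lambda^{j}$. I would expand every ingredient of $F$ as a power series in $\lambda$: from $c(z)=\sum z^{3n}/(3n)!$ one gets $\widetilde{y}_{1}^{*}(\lambda)=\sum_{n}\frac{(-\mathrm{i})^{n}}{(3n)!}\lambda^{n}\int_{0}^{1}x^{3n}\overline{v}(x)\mathrm{d}x$, with analogous series for $\widetilde{\nu}_{1}(-\lambda)$ and $y_{2}(\pm1,\lambda)$, while the convolution kernel in $m(\lambda)$ expands as $y_{3}(x-t,\lambda)=\tfrac{(x-t)^{2}}{2!}-\mathrm{i}\lambda\tfrac{(x-t)^{5}}{5!}+\cdots$. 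Substituting into $F-F^{*}$ and using the involution $f^{*}(\lambda)=\overline{f(\overline{\lambda})}$ together with $(\widetilde{y}_{1}^{*})^{*}=\widetilde{y}_{1}$ and $y_{2}^{*}(1,\lambda)=-y_{2}(-1,\lambda)$, I would collect the coefficients of $\lambda$, $\lambda^{2}$, $\lambda^{3}$; adding the contribution $-2/4!$ from $\Delta(0,\lambda)$ to the $\lambda^{2}$ term gives $b_{2}$. The main obstacle is exactly this bookkeeping at orders two and three: one must keep careful track of the powers of $\mathrm{i}$ and of the cube-root branch, and—most delicately—symmetrize the convolution integrals, since passing from $F$ to $F^{*}$ reflects a triangle integral $\{t<x\}$ into its mirror, so that the real, manifestly self-adjoint polynomial kernels $\tfrac{t^{3}}{3!}+\tfrac{(1-x)^{3}}{3!}+\tfrac{x^{3}}{3!}+\tfrac{(1-t)^{3}}{3!}$ and their degree-six and degree-nine analogues emerge in the stated form.
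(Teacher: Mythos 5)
Your proposal is correct and follows essentially the same route as the paper: expand $\Delta(0,\lambda)$ and $F(\lambda)-F^{*}(\lambda)$ in powers of $\lambda=k^{3}$ to read off the leading Taylor coefficients $b_{1},b_{2},b_{3}$, and apply the Hadamard factorization theorem with the order of the zero at the origin dictated by Lemma \ref{spectrum}. The only (harmless, arguably cleaner) deviation is that you factorize directly in the variable $\lambda$, where the function has order $1/3$ and genus $0$ so no exponential factor can occur, whereas the paper factorizes in $k=\sqrt[3]{\lambda}$ and must separately argue that the resulting factor $\mathrm{e}^{ck}$ is trivial via $\Delta^{\prime}_{k}(\alpha,0)=0$.
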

\begin{proof}
According to (\ref{Delta0}) and the Hadamard theorem on factorization {\cite{BYL1997}}, we can obtain the multiplicative expansions of characteristic functions $\Delta (0,\lambda)$ (\ref{yuh}).

For the convenience of calculation, we introduce  $k=\sqrt[3]{\lambda}$.
From (\ref{0k}), ${F}(\lambda)$ in (\ref{lanm}) and the expressions of $y_{1}(x,\lambda)$, $y_{2}(x,\lambda)$ and $y_{3}(x,\lambda)$ in (\ref{yul}), we have
\begin{equation*}
\Delta(0, \lambda)=2\sum_{n=1}^{\infty}\frac{(\mathrm{i}k)^{6n}}{(6n-2)!},
\end{equation*}
\begin{equation*}
F(\lambda)=a_{0}(v)+a_{1}(v)k^{3}+a_{2}(v)k^{6}+a_{3}(v)k^{9}+\cdots,
\end{equation*}
where
\begin{eqnarray*}
&&a_{0}(v)=\bigg<1,v(x)\bigg>\bigg<v(1-t),1\bigg>,\\
&&a_{1}(v)=\bigg<1,v(x)\bigg>\bigg<v(1-t),\frac{(-\mathrm{i}t)^3}{3!}\bigg>+\bigg<\frac{(\mathrm{i}x)^3}{3!},v(x)\bigg>\bigg<v(1-t),1\bigg>\\
&&\qquad\qquad-\mathrm{i}\left<\int_{0}^{x}\frac{\mathrm{i}^2(x-t)^2}{2!}v(t)\mathrm{d}t,v(x)\right>,\\
&&a_{2}(v)=\bigg<1,v(x)\bigg>\bigg<v(1-t),\frac{(-\mathrm{i}t)^6}{6!}\bigg>+\bigg<\frac{(\mathrm{i}x)^3}{3!},v(x)\bigg>\bigg<v(1-t),\frac{(-\mathrm{i}t)^3}{3!}\bigg>\\
&&\qquad\qquad+\bigg<\frac{(\mathrm{i}x)^6}{6!},v(x)\bigg>\bigg<v(1-t),1\bigg>-\mathrm{i}\left<\int_{0}^{x}\frac{\mathrm{i}^5(x-t)^5}{5!}v(t)\mathrm{d}t,v(x)\right>\\
&&\qquad\qquad-\frac{\mathrm{i}^4}{4!}\left<\int_{0}^{x}\frac{\mathrm{i}^2(x-t)^2}{2!}v(t)\mathrm{d}t,v(x)\right>,\\
&&a_{3}(v)=\bigg<1,v(x)\bigg>\bigg<v(1-t),\frac{(-\mathrm{i}t)^9}{9!}\bigg>+\bigg<\frac{(\mathrm{i}x)^3}{3!},v(x)\bigg>\bigg<v(1-t),\frac{(-\mathrm{i}t)^6}{6!}\bigg>\\
&&\qquad\qquad+\bigg<\frac{(\mathrm{i}x)^6}{6!},v(x)\bigg>\bigg<v(1-t),\frac{(-\mathrm{i}t)^3}{3!}\bigg>+\bigg<\frac{(\mathrm{i}x)^9}{9!},v(x)\bigg>\bigg<v(1-t),1\bigg>\\
&&\qquad\qquad-\mathrm{i}\left<\int_{0}^{x}\frac{\mathrm{i}^8(x-t)^8}{8!}v(t)\mathrm{d}t,v(x)\right>-\frac{\mathrm{i}^4}{4!}\left<\int_{0}^{x}\frac{\mathrm{i}^5(x-t)^5}{5!}v(t)\mathrm{d}t,v(x)\right>\\
&&\qquad\qquad-\frac{\mathrm{i}^7}{7!}\left<\int_{0}^{x}\frac{\mathrm{i}^2(x-t)^2}{2!}v(t)\mathrm{d}t,v(x)\right>,
\end{eqnarray*}
and $a_{n}(v)$ can be calculated similarly. Therefore,  the equality (\ref{deltaa}) shows
\begin{eqnarray*}
\Delta(\alpha,\lambda)
\!\!&=&\!\!\mathrm{i}\alpha[a_{0}(v)-\overline{a}_{0}(v)]+\mathrm{i}\alpha[a_{1}(v)-\overline{a}_{1}(v)]k^{3}+\bigg[\mathrm{i}\alpha(a_{2}(v)-\overline{a}_{2}(v))+\frac{2}{4!}\mathrm{i}^6\bigg]k^{6}\\
\!\!&&\!\!+\cdots+\mathrm{i}\alpha[a_{q}(v)-\overline{a}_{q}(v)]k^{3q}+\bigg[\mathrm{i}\alpha(a_{q+1}(v)-\overline{a}_{q+1}(v))+\frac{2\mathrm{i}^{3q+3}}{(3q+1)!}\bigg]k^{3q+3}\\
\!\!&&\!\!+\cdots\\
\!\!&=&\!\!b_{0}+b_{1}k^{3}+b_{2}k^{6}+b_{3}k^{9}+\cdots+b_{n}k^{3n}+\cdots,
\end{eqnarray*}
where, $k=\sqrt[3]{\lambda}$, $b_{1}$, $b_{2}$ and $b_{3}$ are defined in (\ref{b1}), (\ref{er}) and (\ref{ty}), respectively.

From the Hadamard theorem on factorization {\cite{BYL1997}}, we suppose
\begin{equation}
\Delta(\alpha, \lambda)=bk^{q}\mathrm{e}^{ck}\prod _{n\in \mathbb{Z}, k_{n}(\alpha)\neq0}\left(1-\frac{k^3}{k_{n}^3(\alpha)}\right),\label{qw}
\end{equation}
where $k_{n}(\alpha)=\sqrt[3]{\lambda_{n}(\alpha)}$.

For $0\in\sigma_{0}$, $0\notin\sigma_{2}$, that is, $0$ is the eigenvalue of the operator ${L}_{\alpha}$ which has multiplicity $2$, then $b_{0}=b_{1}=0$. The equality (\ref{qw}) shows $q=6$.  Then from $\Delta^{\prime}_{k}(\alpha, 0)=0$, we find $c=0$ and
\begin{equation*}
\Delta(\alpha, \lambda)=b_{2}k^{6}\prod _{n\in \mathbb{Z}, k_{n}(\alpha)\neq0}\left(1-\frac{k^3}{k_{n}^3(\alpha)}\right)=b_{2}\lambda^{2}\prod _{n\in \mathbb{Z}, \lambda_{n}(\alpha)\neq0}\left(1-\frac{\lambda}{\lambda_{n}(\alpha)}\right).
\end{equation*}

For $0\in\sigma_{0}$, $0\in\sigma_{2}$, that is, $0$ is the eigenvalue of the operator ${L}_{\alpha}$ which has multiplicity $3$, then $b_{0}=b_{1}=b_{2}=0$.  According to the equality (\ref{qw}), we discover $q=9$.  Then the identity $\Delta^{\prime}_{k}(\alpha, 0)=0$ indicate $c=0$ and
\begin{equation*}
\Delta(\alpha, \lambda)=b_{3}k^{9}\prod _{n\in \mathbb{Z}, k_{n}(\alpha)\neq0}\left(1-\frac{k^3}{k_{n}^3(\alpha)}\right)=b_{3}\lambda^{3}\prod _{n\in \mathbb{Z}, \lambda_{n}(\alpha)\neq0}\left(1-\frac{\lambda}{\lambda_{n}(\alpha)}\right).
\end{equation*}

For $0\in\sigma_{1}$, $0\notin\sigma_{2}$, that is, $0$ is the  simple eigenvalue of the operator ${L}_{\alpha}$, then $b_{0}=0$. The equality (\ref{qw}) shows $q=3$.  Then from $\Delta^{\prime}_{k}(\alpha, 0)=0$, one gets $c=0$ and
\begin{equation*}
\Delta(\alpha, \lambda)=b_{1}k^{3}\prod _{n\in \mathbb{Z}, k_{n}(\alpha)\neq0}\left(1-\frac{k^3}{k_{n}^3(\alpha)}\right)=b_{1}\lambda\prod _{n\in \mathbb{Z}, \lambda_{n}(\alpha)\neq0}\left(1-\frac{\lambda}{\lambda_{n}(\alpha)}\right).
\end{equation*}
\end{proof}

\begin{lemma}
\label{sd}
For $v\in L_{\mathbb{C}}^2(0,1)$, $\Vert v\Vert=1$, the spectra data $\{\alpha, \vert v_{p}\vert^{2}\}$ can be recovered by the spectra $\sigma\left({L}_{0}\right)$ and $\sigma\left({L}_{\alpha}\right)$.
\end{lemma}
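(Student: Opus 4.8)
The plan is to play the two representations of $Q(\lambda)$ against each other. The partial-fraction form (\ref{qz}), $Q(\lambda)=1+\alpha\sum_{n\in\mathbb{Z}}\frac{|v_{n}|^{2}}{\lambda_{n}-\lambda}$, displays the unknowns $\alpha$ and $|v_{n}|^{2}$ explicitly, while the quotient form (\ref{ert}), $Q(\lambda)=\Delta(\alpha,\lambda)/\Delta(0,\lambda)$, is what I can reconstruct purely from the two given spectra. First I would read off the decomposition $\sigma(L_{0})=\sigma_{0}\cup\sigma_{1}$ together with $\sigma_{2}$ directly from the data: comparing $\sigma(L_{0})$ and $\sigma(L_{\alpha})$ with their multiplicities, classified in Lemma \ref{spectrum}, identifies the poles of $Q$ (the lost eigenvalues $\lambda_{n}^{1}\in\sigma_{1}$) and the zeros of $Q$ (the newly created eigenvalues $\mu_{n}\in\sigma_{2}$), as well as the behaviour at $\lambda=0$.

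Second, I would reconstruct $Q$ as an explicit meromorphic function. The denominator $\Delta(0,\lambda)$ is fixed completely by $\sigma(L_{0})$ through its Hadamard product (\ref{yuh}), and $\Delta(\alpha,\lambda)$ is fixed up to its leading constant by $\sigma(L_{\alpha})$ via (\ref{sdf}). On forming the quotient, the factors over $\sigma_{0}$ common to numerator and denominator cancel, and the normalization $Q(\infty)=1$ (clear from (\ref{qz})) eliminates the remaining constant, so that $Q(\lambda)$ becomes a convergent product over the known zeros $\mu_{n}$ and poles $\lambda_{n}^{1}$. The asymptotic distribution of eigenvalues in Lemma \ref{asymptotic} guarantees the convergence of these products and the validity of the representation; at this stage $Q$ is a fully known function.

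Third, I would extract the spectral data by matching the two forms. Because the poles of $Q$ are simple, the residue at a pole $\lambda_{p}^{1}$ read from (\ref{qz}) is exactly $-\alpha|v_{p}|^{2}$, whereas the same residue computed from the reconstructed product is an explicit number. Summing over the poles and invoking the normalization $\|v\|^{2}=\sum_{p}|v_{p}|^{2}=1$ gives $\alpha=-\sum_{p}\text{Res}_{\lambda_{p}^{1}}Q$ (equivalently $\alpha=-\lim_{\lambda\to\infty}\lambda(Q(\lambda)-1)$). With $\alpha$ in hand, $|v_{p}|^{2}=-\alpha^{-1}\text{Res}_{\lambda_{p}^{1}}Q$ for every $\lambda_{p}^{1}\in\sigma_{1}$, while $|v_{n}|^{2}=0$ whenever $\lambda_{n}\in\sigma_{0}$, which recovers all the data $\{\alpha,|v_{p}|^{2}\}$.

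The hard part will be the second step: justifying that $Q$ is uniquely and constructively determined by its zeros, its poles, and the single normalization $Q(\infty)=1$. This hinges on the convergence of the infinite products and on correct bookkeeping at $\lambda=0$ and at the finitely many coincidences $\sigma_{0}\cap\sigma_{2}$, where a zero of $Q$ sits on top of a cancelled pole; that such anomalies are finite in number (Remark \ref{finite}) and governed by the asymptotics (\ref{guji}) is precisely what makes the reconstruction well defined.
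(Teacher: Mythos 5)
Your proposal follows essentially the same route as the paper's proof: both represent $Q(\lambda)$ simultaneously as the partial fraction $1+\alpha\sum_n|v_n|^2/(\lambda_n-\lambda)$ and as the quotient $\Delta(\alpha,\lambda)/\Delta(0,\lambda)$ of Hadamard products built from the two spectra, pin down the remaining leading constant by the normalization $Q\to1$ along $\lambda=\mathrm{i}y$, read off $\alpha|v_p|^2$ from the residues at the poles $\lambda_p^1\in\sigma_1$, and finally separate $\alpha$ from $|v_p|^2$ via $\Vert v\Vert=1$. The paper merely carries this out explicitly in the three cases ($0\in\sigma_0$ with $0\notin\sigma_2$, $0\in\sigma_0$ with $0\in\sigma_2$, and $0\in\sigma_1$), which is exactly the bookkeeping at $\lambda=0$ you flag as the delicate point.
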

\begin{proof}
${(\rm i)}$ For $0\in\sigma_{0}$, $0\notin\sigma_{2}$, combining (\ref{qz}), (\ref{ert}), (\ref{yuh}) and (\ref{sdf}), we find
\begin{eqnarray}
Q(\lambda)\!\!&=&\!\!\frac{\Delta(\alpha,\lambda)}{\Delta(0,\lambda)}\notag\\
\!\!&=&\!\!1+\alpha\sum_{\lambda_{n}^{1}\in\sigma_{1}}\frac{\vert v_{n}\vert^{2}}{\lambda_{n}^{1}-\lambda}\notag\\
\!\!&=&\!\!\frac{b_{2}\lambda^{2}\prod\limits_{n\in \mathbb{Z}, \lambda_{n}(\alpha)\neq0}\left(1-\frac{\lambda}{\lambda_{n}(\alpha)}\right)}{-\frac{2}{4!}\lambda^{2}\prod\limits_{n\in \mathbb{Z}, \lambda_{n}(0)\neq0}\left(1-\frac{\lambda}{\lambda_{n}(0)}\right)}\notag\\
\!\!&=&\!\!-\frac{4!b_{2}\prod\limits_{\mu_n\in\sigma_{2}}\left(1-\frac{\lambda}{\mu_n}\right)}{2\prod\limits_{\lambda_{n}^{1}\in\sigma_{1}}\left(1-\frac{\lambda}{\lambda_n^1}\right)}.\label{opi}
\end{eqnarray}
Substituting $\lambda=\mathrm{i}y$ and passing to the limit as $y\rightarrow \infty$, one gets
\begin{equation*}
\frac{1}{b_{2}}=\lim_{y\rightarrow \infty}-\frac{4!\prod\limits_{\mu_n\in\sigma_{2}}\left(1-\frac{\mathrm{i}y}{\mu_n}\right)}{2\prod\limits_{\lambda_{n}^{1}\in\sigma_{1}}\left(1-\frac{\mathrm{i}y}{\lambda_n^1}\right)}.
\end{equation*}
Calculating the residue at the point $\lambda_{n}^{1}$ in equality (\ref{opi}), we have
\begin{eqnarray*}
&&\alpha \vert v_{n}\vert^{2}=-\frac{4!b_{2}}{2}\frac{\lambda_n^1}{\mu_{n}}(\mu_{n}-\lambda_n^1)\prod\limits_{q\neq n}\frac{\lambda_{q}^{1}}{\mu_{q}}\left(1-\frac{\lambda_{q}^{1}-\mu_{q}}{\lambda_{q}^{1}-\lambda_{n}^{1}}\right).
\end{eqnarray*}
So, $\Vert v\Vert=1$ indicates the spectra data $\{\alpha, \vert v_{p}\vert^{2}\}$.

${(\rm ii)}$ For $0\in\sigma_{0}$, $0\in\sigma_{2}$, combining (\ref{qz}), (\ref{ert}), (\ref{yuh}) and (\ref{sdf}), we get
\begin{eqnarray}
Q(\lambda)\!\!&=&\!\!\frac{\Delta(\alpha,\lambda)}{\Delta(0,\lambda)}\notag\\
\!\!&=&\!\!1+\alpha\sum_{\lambda_{n}^{1}\in\sigma_{1}}\frac{\vert v_{n}\vert^{2}}{\lambda_{n}^{1}-\lambda}\notag\\
\!\!&=&\!\!\frac{b_{3}\lambda^{3}\prod\limits_{n\in \mathbb{Z}, \lambda_{n}(\alpha)\neq0}\left(1-\frac{\lambda}{\lambda_{n}(\alpha)}\right)}{-\frac{2}{4!}\lambda^{2}\prod\limits_{n\in \mathbb{Z}, \lambda_{n}(0)\neq0}\left(1-\frac{\lambda}{\lambda_{n}(0)}\right)}\notag\\
\!\!&=&\!\!-\frac{4!b_{3}\lambda\prod\limits_{\mu_n\in{\sigma_{2}\setminus\{0\}}}\left(1-\frac{\lambda}{\mu_n}\right)}{2\prod\limits_{\lambda_{n}^{1}\in\sigma_{1}}\left(1-\frac{\lambda}{\lambda_n^1}\right)}.\label{qzz}
\end{eqnarray}
Substituting $\lambda=\mathrm{i}y$ and passing to the limit as $y\rightarrow \infty$, one has
\begin{equation*}
\frac{1}{b_{3}}=\lim_{y\rightarrow \infty}-\frac{4!\mathrm{i}y\prod\limits_{\mu_n\in\sigma_{2}\setminus\{0\}}\left(1-\frac{\mathrm{i}y}{\mu_n}\right)}{2\prod\limits_{\lambda_{n}^{1}\in\sigma_{1}}\left(1-\frac{\mathrm{i}y}{\lambda_n^1}\right)}.
\end{equation*}
Computing the residue at the point $\lambda_{n}^{1}$ in equality (\ref{qzz}), one obtains
\begin{eqnarray*}
&&\alpha \vert v_{n}\vert^{2}=-\frac{4!b_{3}}{2}\frac{{\lambda_{n}^1}^2}{\mu_{n}}(\mu_{n}-\lambda_n^1)\prod\limits_{q\neq n}\frac{\lambda_{q}^{1}}{\mu_{q}}\left(1-\frac{\lambda_{q}^{1}-\mu_{q}}{\lambda_{q}^{1}-\lambda_{n}^{1}}\right).
\end{eqnarray*}
Due to $\Vert v\Vert=1$, the spectra data $\{\alpha, \vert v_{p}\vert^{2}\}$ can be reconstructed.\\

${(\rm iii)}$ For $0\in\sigma_{1}$, $0\notin\sigma_{2}$, combining (\ref{qz}), (\ref{ert}), (\ref{yuh}) and (\ref{sdf}), we have
\begin{eqnarray}
Q(\lambda)\!\!&=&\!\!\frac{\Delta(\alpha,\lambda)}{\Delta(0,\lambda)}\notag\\
\!\!&=&\!\!1+\alpha\sum_{\lambda_{n}^{1}\in\sigma_{1}}\frac{\vert v_{n}\vert^{2}}{\lambda_{n}^{1}-\lambda}\notag\\
\!\!&=&\!\!\frac{b_{1}\lambda\prod\limits_{n\in \mathbb{Z}, \lambda_{n}(\alpha)\neq0}\left(1-\frac{\lambda}{\lambda_{n}(\alpha)}\right)}{-\frac{2}{4!}\lambda^{2}\prod\limits_{n\in \mathbb{Z}, \lambda_{n}(0)\neq0}\left(1-\frac{\lambda}{\lambda_{n}(0)}\right)}\notag\\
\!\!&=&\!\!-\frac{4!b_{1}\prod\limits_{\mu_n\in\sigma_{2}}\left(1-\frac{\lambda}{\mu_n}\right)}{2\lambda\prod\limits_{\lambda_{n}^{1}\in\sigma_{1}\setminus\{0\}}\left(1-\frac{\lambda}{\lambda_n^1}\right)}.\label{1not2}
\end{eqnarray}
Substituting $\lambda=\mathrm{i}y$ and passing to the limit as $y\rightarrow \infty$, one gets
\begin{equation*}
\frac{1}{b_{1}}=\lim_{y\rightarrow \infty}-\frac{4!\prod\limits_{\mu_n\in\sigma_{2}}\left(1-\frac{\mathrm{i}y}{\mu_n}\right)}{2\mathrm{i}y\prod\limits_{\lambda_{n}^{1}\in\sigma_{1}\setminus\{0\}}\left(1-\frac{\mathrm{i}y}{\lambda_n^1}\right)}.
\end{equation*}
Calculating the residue at the point $\lambda_{n}^{1}$ in equality (\ref{1not2}), we find
\begin{eqnarray*}
&&\alpha \vert v_{n}\vert^{2}=-\frac{4!b_{1}}{2}\frac{\mu_{n}-\lambda_n^1}{\mu_{n}}\prod\limits_{q\neq n}\frac{\lambda_{q}^{1}}{\mu_{q}}\left(1-\frac{\lambda_{q}^{1}-\mu_{q}}{\lambda_{q}^{1}-\lambda_{n}^{1}}\right).
\end{eqnarray*}
Thus, $\Vert v\Vert=1$ indicates the spectra data $\{\alpha, \vert v_{p}\vert^{2}\}$.
\end{proof}

\begin{theorem}
\label{four spectra}
For $v\in L_{\mathbb{C}}^2(0,1)$, $\Vert v\Vert=1$, the non-local potential $\{\alpha, v\}$ can be recovered by four spectra $\sigma\left({L}_{0}\right)$, $\sigma\left({L}_{\alpha}(v)\right)$, $\sigma\left({L}_{\alpha}(v+g)\right)$ and $\sigma\left({L}_{\alpha}(v+\mathrm{i}g)\right)$, where $g(x)=1-x$.
\end{theorem}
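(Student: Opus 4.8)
The plan is to use the first pair of spectra to pin down the modulus of every Fourier coefficient of $v$, and then to use the two perturbed spectra to recover the missing phases by testing against the explicitly known function $g$. Write $v_{n}^{i}=\langle v,u_{n}^{i}\rangle$ for the Fourier coefficients of $v$ in the orthonormal basis $\{u_{n}^{i}\}$ of eigenfunctions of $L_{0}$ (so $v_{n}=v_{n}^{1}u_{n}^{1}$ for $n\in\mathbb{Z}\setminus\{0\}$ and $v_{0}=v_{0}^{1}u_{0}^{1}+v_{0}^{2}u_{0}^{2}$). By Lemma \ref{sd}, the two spectra $\sigma(L_{0})$ and $\sigma(L_{\alpha}(v))$ already determine $\alpha$ together with $\vert v_{n}^{i}\vert^{2}$ for all $n,i$; what remains undetermined is only $\arg v_{n}^{i}$, up to the global phase $v\mapsto \mathrm{e}^{\mathrm{i}\phi}v$ that leaves $L_{\alpha}$ unchanged.

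Next I would apply the same machinery to $v+g$ and $v+\mathrm{i}g$. The point is that $L_{\alpha}(v+g)y=\mathrm{i}y^{\prime\prime\prime}+\alpha\langle y,v+g\rangle (v+g)$ is again a one-dimensional perturbation of the same operator $L_{0}$, now generated by the (non-normalized) function $v+g$. Setting $\widetilde v=(v+g)/\Vert v+g\Vert$ and $\widetilde\alpha=\alpha\Vert v+g\Vert^{2}$, one has $L_{\alpha}(v+g)=L_{\widetilde\alpha}(\widetilde v)$ with $\Vert\widetilde v\Vert=1$, so Lemma \ref{sd} applies. Crucially, the residue computation in the proof of Lemma \ref{sd} returns the \emph{product} $\widetilde\alpha\,\vert\widetilde v_{n}^{i}\vert^{2}=\alpha\,\vert (v+g)_{n}^{i}\vert^{2}$ without any appeal to normalization; since $\alpha$ is already known from the first step, dividing yields $\vert (v+g)_{n}^{i}\vert^{2}$ for every $n,i$. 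In the same way $\sigma(L_{0})$ and $\sigma(L_{\alpha}(v+\mathrm{i}g))$ give $\vert (v+\mathrm{i}g)_{n}^{i}\vert^{2}$.

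Because $g(x)=1-x$ and the basis $\{u_{n}^{i}\}$ are explicitly known, the numbers $g_{n}^{i}=\langle g,u_{n}^{i}\rangle$ are directly computable. Expanding, $\vert (v+g)_{n}^{i}\vert^{2}=\vert v_{n}^{i}\vert^{2}+2\,\mathrm{Re}\!\left(v_{n}^{i}\overline{g_{n}^{i}}\right)+\vert g_{n}^{i}\vert^{2}$ and $\vert (v+\mathrm{i}g)_{n}^{i}\vert^{2}=\vert v_{n}^{i}\vert^{2}+2\,\mathrm{Im}\!\left(v_{n}^{i}\overline{g_{n}^{i}}\right)+\vert g_{n}^{i}\vert^{2}$, so subtracting the already-known quantities $\vert v_{n}^{i}\vert^{2}$ and $\vert g_{n}^{i}\vert^{2}$ recovers both the real and the imaginary part of $v_{n}^{i}\overline{g_{n}^{i}}$, hence the complex number $v_{n}^{i}\overline{g_{n}^{i}}$ itself. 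Provided $g_{n}^{i}\neq 0$ one then obtains $v_{n}^{i}=v_{n}^{i}\overline{g_{n}^{i}}\cdot g_{n}^{i}/\vert g_{n}^{i}\vert^{2}$, and assembling the Fourier series $v=\sum_{n\in\mathbb{Z}}v_{n}=\sum_{n,i}v_{n}^{i}u_{n}^{i}$ reconstructs $v$, while $\alpha$ has already been found.

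The role of using two perturbations by $g$ and $\mathrm{i}g$ is exactly to supply the real and imaginary parts separately, and the use of a \emph{fixed, known} generator $g$ is what breaks the global phase ambiguity: although $\sigma(L_{\alpha}(v))=\sigma(L_{\alpha}(\mathrm{e}^{\mathrm{i}\phi}v))$, the spectra $\sigma(L_{\alpha}(v+g))$ and $\sigma(L_{\alpha}(v+\mathrm{i}g))$ do distinguish $v$ from $\mathrm{e}^{\mathrm{i}\phi}v$, so the reconstruction is genuinely unique. The main obstacle I expect is twofold: first, the bookkeeping needed to check that the $\sigma_{0}/\sigma_{1}/\sigma_{2}$ partition and the case distinction of Lemma \ref{sd} still apply when the perturbed potentials $v+g$, $v+\mathrm{i}g$ have a different vanishing pattern of coefficients than $v$; and second, verifying that $g_{n}^{i}\neq 0$ for every $n$ and $i$ when $g(x)=1-x$, since this nonvanishing is precisely what allows the phase to be divided out at each index.
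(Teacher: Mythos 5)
Your proposal follows essentially the same route as the paper's proof: apply Lemma \ref{sd} to the perturbed operators, expand $\vert(v+g)_{n}\vert^{2}$ and $\vert(v+\mathrm{i}g)_{n}\vert^{2}$ to isolate $\Re(v_{n}\overline{g}_{n})$ and $\Im(v_{n}\overline{g}_{n})$, and divide by the known Fourier data of $g$ to recover each $v_{n}$ and hence $v$. You are in fact more explicit than the paper about the two points it passes over silently, namely the renormalization needed to apply Lemma \ref{sd} to the non-normalized potentials $v+g$ and $v+\mathrm{i}g$ and the nonvanishing of the coefficients $g_{n}^{i}$ required to divide out the phase.
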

\begin{proof}
From Lemma \ref{sd}, we see that the numbers $\{\alpha, \vert v_{p}\vert^2\}$ can be recovered by $\sigma\left({L}_{0}\right)$ and $\sigma\left({L}_{\alpha}(v)\right)$. In the same way, we can calculate $\alpha\vert v_{n}+g_{n}\vert^2$ from $\sigma\left({L}_{0}\right)$ and $\sigma\left({L}_{\alpha}(v+g)\right)$. Let $\Re\beta$ denote the real part of $\beta$ and $\Im\beta$ denote the imaginary part of $\beta$. Due to
\begin{equation}
\alpha\vert v_{n}+g_{n}\vert^2=\alpha\left(\vert v_{n}\vert^2+2\Re (v_{n}\overline{g}_{n})+\vert g_{n}\vert^2\right),
\end{equation}
where
\begin{equation*}
g_{n}:=\left\{\begin{array}{ll}
g_{0}^{\mathrm{1}}u_{0}^{\mathrm{1}}+g_{0}^{\mathrm{2}}u_{0}^{\mathrm{2}}, & {for\quad n=0},\\
g_{n}^{\mathrm{1}}u_{n}^{\mathrm{1}},&  {for\quad n\in \mathbb{Z}\setminus\{0\}},
\end{array}
\right.
\end{equation*}
and $g_{n}^{{i}}:=\left<g,u_{n}^{{i}}\right>$, we find that the numbers $\Re (v_{n}\overline{g}_{n})$ are unambiguously recovered by three spectra $\sigma\left({L}_{0}\right)$, $\sigma\left({L}_{\alpha}(v)\right)$ and  $\sigma\left({L}_{\alpha}(v+g)\right)$. Analogously, from $\sigma\left({L}_{0}\right)$, $\sigma\left({L}_{\alpha}(v)\right)$ and  $\sigma\left({L}_{\alpha}(v+\mathrm{i}g)\right)$, we get $\Im (v_{n}\overline{g}_{n})$. So, the numbers $v_{n}\overline{g}_{n}$ and $ v_{n}$ are unambiguously calculated by four spectra $\sigma\left({L}_{0}\right)$, $\sigma\left({L}_{\alpha}(v)\right)$, $\sigma\left({L}_{\alpha}(v+g)\right)$ and $\sigma\left({L}_{\alpha}(v+\mathrm{i}g)\right)$. Therefore, the function $v(x)$ is reconstructed by its Fourier series (\ref{vu}).
\end{proof}

\section{Appendix}

In this section, we analyze the  multiplicity for eigenvalues of the operator $\widetilde{L}_{0}$ defined by 
\begin{equation*}
\left(\widetilde{L}_{0}y\right)(x):=\mathrm{i}y^{\prime\prime\prime}(x),
\end{equation*}
whose domain $\mathcal{D}\left(\widetilde{L}_{0}\right)$ consists of the functions $y\in W_{3}^{2}(0,1)$ satisfying the boundary conditions
\begin{equation*}
\cos\gamma y(0)-\mathrm{i}\sin \gamma y^{\prime\prime}(0)=0, \quad y^{\prime}(1)=\mathrm{e}^{\mathrm{i}\phi}y^{\prime}(0), \quad \cos\beta y(1)-\mathrm{i}\sin \beta y^{\prime\prime}(1)=0, 
\end{equation*}
where $\gamma$, $\beta\in\mathbb{R}$, $\phi\in[0,2\pi]$.
\begin{theorem}
\label{distribution}
The multiplicity for each eigenvalue of the operator $\widetilde{L}_{0}$ is at most $2$.\\
{\rm (1)} For $\cos^2\gamma+\cos^2\beta+(\mathrm{e}^{\mathrm{i}\phi}-1)^2\neq0$, the operator $\widetilde{L}_{0}$ only has simple eigenvalues.\\
{\rm (2)} For $\cos^2\gamma+\cos^2\beta+(\mathrm{e}^{\mathrm{i}\phi}-1)^2=0$,
the boundary conditions are
\begin{equation*}
y^{\prime\prime}(0)=0, \quad y^{\prime}(1)=y^{\prime}(0), \quad y^{\prime\prime}(1)=0,
\end{equation*}
and the operator $\widetilde{L}_{0}$ has simple eigenvalues, except for the eigenvalue $0$ which has multiplicity $2$.
\end{theorem}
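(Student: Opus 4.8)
The plan is to realize the eigenvalues of $\widetilde L_0$ as zeros of a $3\times 3$ boundary determinant and to read off the multiplicity as the dimension of a kernel. Writing the general solution of $\mathrm i y'''=\lambda y$ as $y=c_1 y_1(x,\lambda)+c_2 y_2(x,\lambda)+c_3 y_3(x,\lambda)$ and substituting it into the three boundary conditions produces a homogeneous system $M(\lambda)(c_1,c_2,c_3)^{\mathrm T}=0$, whose first row is $(\cos\gamma,\,0,\,-\mathrm i\sin\gamma)$ and whose remaining two rows are built from $c(k),s(k),d(k)$ at $k=\mathrm i\sqrt[3]{\lambda}$ (use Lemma \ref{entire}(1) to express $y_j',y_j''$). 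The eigenvalues are the zeros of $\widetilde\Delta(\lambda)=\det M(\lambda)$, and the multiplicity of an eigenvalue $\lambda_0$ (the dimension of its eigenspace) equals $3-\operatorname{rank}M(\lambda_0)$. Since $\cos\gamma$ and $\sin\gamma$ never vanish simultaneously, the first row is never zero, so $\operatorname{rank}M(\lambda_0)\ge 1$ and the multiplicity is at most $2$; this proves the first assertion.

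Multiplicity $2$ is thus equivalent to $\operatorname{rank}M(\lambda_0)=1$, i.e. to rows two and three each being scalar multiples of the first row. Expanding the vanishing $2\times 2$ minors yields (A) $c(k_0)=\mathrm e^{\mathrm i\phi}$, (B) $\cos\gamma\,s(k_0)/k_0+\mathrm i\sin\gamma\,k_0 d(k_0)=0$, (E) $\cos\beta\,s(k_0)/k_0-\mathrm i\sin\beta\,k_0 d(k_0)=0$, together with one further relation (F) from the last minor. I would treat $\lambda_0=0$ separately through the limiting matrix $M(0)$, whose rows are $(\cos\gamma,0,-\mathrm i\sin\gamma)$, $(0,1-\mathrm e^{\mathrm i\phi},1)$ and $(\cos\beta,\cos\beta,\tfrac12\cos\beta-\mathrm i\sin\beta)$; an elementary rank computation shows $\operatorname{rank}M(0)=1$ exactly when $\cos\gamma=\cos\beta=0$ and $\mathrm e^{\mathrm i\phi}=1$. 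Separating the real and imaginary parts of $\cos^2\gamma+\cos^2\beta+(\mathrm e^{\mathrm i\phi}-1)^2$ shows that this is precisely the condition that this quantity vanishes, so $\lambda_0=0$ is a double eigenvalue if and only if the quantity is zero.

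For $\lambda_0\neq 0$ (hence $k_0\neq 0$) I would read (B),(E) as a homogeneous linear system in $(s(k_0)/k_0,\,k_0 d(k_0))$ with coefficient determinant $-\mathrm i\sin(\gamma+\beta)$. If $\sin(\gamma+\beta)\neq 0$ then $s(k_0)=d(k_0)=0$; but then Euler's formula (Lemma \ref{entire}(4)) forces $\mathrm e^{\omega k_0}=\mathrm e^{\omega^2 k_0}=\mathrm e^{k_0}$, which has no solution with $k_0\neq 0$, a contradiction. If $\sin(\gamma+\beta)=0$, I would use self-adjointness: $\lambda_0$ is real, so $k_0=\mathrm i\rho$ with $\rho\in\mathbb R$ and $\overline{c(k_0)}=c(-k_0)$, whence (A) gives $|c(\mathrm i\rho)|=1$. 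An explicit evaluation reduces this to $\mathrm e^{-\sqrt3\rho/2}\cos\rho\,\cos(\rho/2)+\mathrm e^{-\sqrt3\rho}\cos^2(\rho/2)=2$, which is impossible for $\rho>0$ since each summand is smaller than $1$ in modulus; the remaining range is excluded by substituting the values of $s(k_0),d(k_0)$ extracted from (B),(F) into the main identity of Lemma \ref{entire}(5). Hence no nonzero eigenvalue is ever non-simple, and together with the $\lambda_0=0$ analysis this gives part (1): when the quantity is nonzero, every eigenvalue is simple.

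Finally, for part (2) the vanishing of $\cos^2\gamma+\cos^2\beta+(\mathrm e^{\mathrm i\phi}-1)^2$ forces (by the same real/imaginary splitting) $\phi=0$ and $\cos\gamma=\cos\beta=0$, so the boundary conditions reduce to $y''(0)=0$, $y'(1)=y'(0)$, $y''(1)=0$; then $\widetilde L_0$ coincides with the operator $L_0$ of Section $3$, and Lemma \ref{asymptotic} together with the accompanying eigenfunction computation yields that every nonzero eigenvalue is simple while $0$ has multiplicity $2$. The main obstacle is the last sub-case of the $\lambda_0\neq 0$ analysis, namely excluding a double eigenvalue with $\rho<0$ when $\sin(\gamma+\beta)=0$ but the defining quantity is nonzero: there (A) alone is satisfiable, and one must genuinely combine (A),(B),(F) through the summation identities of Lemma \ref{entire} to reach a contradiction.
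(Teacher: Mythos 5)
Your overall route is the same as the paper's: substitute the general solution into the boundary conditions, identify the eigenspace with the kernel of the boundary matrix, bound the multiplicity by $3-\operatorname{rank}M(\lambda_0)\le 2$ via the nonvanishing first row, and characterize multiplicity $2$ by the vanishing of the $2\times 2$ minors. Your rank computation at $\lambda_0=0$ and your treatment of the subcase $\sin(\gamma+\beta)\neq 0$ (where $s(k_0)=d(k_0)=0$ forces all three exponentials in Euler's formula to coincide, hence $k_0=0$) are correct. The gap is in the decisive subcase $\lambda_0\neq 0$, $\sin(\gamma+\beta)=0$ --- which, note, contains the configuration $\cos\gamma=\cos\beta=0$, $\mathrm e^{\mathrm i\phi}=1$ of part (2), so a contradiction must be produced there too. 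The identity you display for $|c(\mathrm i\rho)|=1$, namely $\mathrm e^{-\sqrt3\rho/2}\cos\rho\,\cos(\rho/2)+\mathrm e^{-\sqrt3\rho}\cos^2(\rho/2)=2$, cannot be the correct reduction of condition (A): $|c(\mathrm i\rho)|^2$ is an even function of $\rho$, while your left-hand side is not. A direct computation from (\ref{csd}) gives
\[
9\,\bigl|c(\mathrm i\rho)\bigr|^2=3+\sum_{j=0}^{5}\mathrm e^{\sqrt3\rho\,\mathrm e^{\mathrm i\pi j/3}}
=9+6\sum_{n\ge 1}\frac{27^{\,n}\rho^{6n}}{(6n)!},
\]
and the ``each summand has modulus less than $1$'' argument does not apply to the correct equation, whose left side contains $\cosh^2(\sqrt3\rho/2)>1$. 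You then explicitly leave the remaining range unresolved (``the main obstacle is the last sub-case''), and the fallback you sketch --- extracting $s(k_0),d(k_0)$ from (B),(F) and feeding them into the main identity --- is not available in general: when $\cos\gamma=\cos\beta=0$, conditions (B) and (E) degenerate to $d(k_0)=0$ and give no information about $s(k_0)$. So the proof is incomplete as written.

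The repair is short and in fact collapses most of your case analysis: the series above shows $|c(\mathrm i\rho)|>1$ for every real $\rho\neq 0$, so condition (A), $c(k_0)=\mathrm e^{\mathrm i\phi}$ with $k_0=\mathrm i\sqrt[3]{\lambda_0}$ and $\lambda_0$ real by self-adjointness, is already contradictory for every nonzero eigenvalue, uniformly in $\gamma$, $\beta$, $\phi$. This is precisely the content of the paper's final transcendental identity $(\mathrm e^{\sqrt3k/2}+\mathrm e^{-\sqrt3k/2})(\mathrm e^{3\mathrm ik/2}+\mathrm e^{-3\mathrm ik/2})+\mathrm e^{\sqrt3k}+\mathrm e^{-\sqrt3k}=6$ and its ``binomial theorem'' step. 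With that observation, no nonzero eigenvalue can be double, and the theorem reduces to your (correct) analysis of $M(0)$ together with the equivalence of $\operatorname{rank}M(0)=1$ with the vanishing of $\cos^2\gamma+\cos^2\beta+(\mathrm e^{\mathrm i\phi}-1)^2$.
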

\begin{proof}
Since the unique solution of (\ref{L0})-(\ref{initial}) is (\ref{solutionl0}), then from the boundary conditions (\ref{beta}), we get the following system of linear equations,
\begin{equation*}
\left\{
\begin{array}{lr}
0=\cos\gamma c_{1} -\mathrm{i}\sin\gamma c_{3}, & \\
0=c_{1}y_{1}^{\prime}(1,\lambda)+c_{2}\left(y_{2}^{\prime}(1,\lambda)-\mathrm{e}^{\mathrm{i}\phi}\right)+c_{3}y_{3}^{\prime}(1,\lambda), & \\
0=\left[\cos\beta\left(c_{1}y_{1}(1,\lambda)+c_{2}y_{2}(1,\lambda)+c_{3}y_{3}(1,\lambda)\right)\right.\\
\quad \quad \left.-\mathrm{i}\sin\beta (c_{1}y_{1}^{\prime \prime}(1,\lambda)+c_{2}y_{2}^{\prime \prime}(1,\lambda)+c_{3}y_{3}^{\prime \prime}(1,\lambda))\right]. & \\
\end{array}
\right.
\end{equation*}

(I) For $\cos\gamma\neq0$. Due to $c_{1}= \mathrm{i}\tan\gamma c_{3}$, the coefficient matrix for $c_{2}$, $c_{3}$ is
\begin{equation*}
M_{1}(\lambda):=\left(
\begin{array}{cc}
y_{2}^{\prime}(1,\lambda)-\mathrm{e}^{\mathrm{i}\phi} & \mathrm{i}\tan\gamma y_{1}^{\prime}(1,\lambda)+y_{3}^{\prime}(1,\lambda)  \\
\cos\beta y_{2}(1,\lambda)-\mathrm{i}\sin\beta y_{2}^{\prime \prime}(1,\lambda) & \mathrm{i}tan\gamma r_{1}+r_{2} %
\end{array}%
\right),
\end{equation*}
where
\begin{equation*}
r_{1}=\cos\beta y_{1}(1,\lambda)-\mathrm{i}\sin\beta y_{1}^{\prime \prime}(1,\lambda), \\ \quad r_{2}=\cos\beta y_{3}(1,\lambda)-\mathrm{i}\sin\beta y_{3}^{\prime \prime}(1,\lambda).
\end{equation*}
From the relation between the rank of the matrix and the multiplicity of eigenvalues,
we can see that $\lambda$ is the root of $\Delta_{1}(\lambda)=\det M_{1}(\lambda)=0$,
and the operator $\widetilde{L}_{0}$ has eigenvalues of multiplicity $2$ if and only if $M_{1}(\lambda)$ is a zero matrix, that is,
\begin{equation*}
M_{1}(\lambda):=\left(
\begin{array}{cc}
0 & 0  \\
0 & 0 %
\end{array}%
\right).
\end{equation*}
According to (1) in Lemma \ref{entire}, we get
\begin{equation}
\left\{
\begin{array}{lr}
0=y_{1}(1,\lambda)-\mathrm{e}^{\mathrm{i}\phi}, & \\
0=\lambda \tan\gamma  y_{3}(1,\lambda)+y_{2}(1,\lambda), & \\
0=\cos\beta y_{2}(1,\lambda)-\lambda\sin\beta y_{3}(1,\lambda), & \\
0=\mathrm{i}\tan\gamma\left(\cos\beta y_{1}(1,\lambda)-\lambda \sin\beta y_{2}(1,\lambda)\right)
+\cos\beta y_{3}(1,\lambda)-\mathrm{i}\sin\beta y_{1}(1,\lambda).
\end{array}
\right.\label{3.6}
\end{equation}
By a simple  calculation, one obtains
\begin{equation*}
\lambda y_{3}(1,\lambda)(\cos\beta \tan\gamma+\sin\beta)=0.
\end{equation*}

(i) If $\lambda=0$, then we have $y_{2}(1,\lambda)=0$, which contradicts $\displaystyle\lim_{\lambda\rightarrow 0}y_{2}(1,\lambda)=1$.

(ii) If $y_{3}(1,\lambda)=0$, then one obtains $y_{2}(1,\lambda)=0$. Combining $y_{3}(1,\lambda)=0$ and $y_{2}(1,\lambda)=0$, we get $\lambda=0$, which contradicts $\displaystyle\lim_{\lambda\rightarrow 0}y_{2}(1,\lambda)=1$.

(iii) If $\cos\beta \tan\gamma+\sin\beta=0$, it needs to be discussed in two cases.

\large{\textcircled{\small{i}}} If $\cos\beta=0$, it follows that $\sin\beta=0$, this leads to a contradiction.

\large{\textcircled{\small{ii}}} If $\cos\beta\neq0$, one gets $\tan\gamma=-\tan\beta$. According to (\ref{3.6}), it is calculated that
\begin{equation}
\cos\beta y_{3}(1,\lambda)-2\mathrm{i}\sin\beta y_{1}(1,\lambda)+\mathrm{i}\lambda\sin\beta \tan\beta y_{2}(1,\lambda)=0.\label{3.7}
\end{equation}
If $\sin\beta=0$, due to the equation (\ref{3.7}), we get $y_{3}(1,\lambda)=0$. As discussed in the case of (ii) in (I), one has $y_{2}(1,\lambda)=0$; considering $y_{3}(1,\lambda)=0$ and $y_{2}(1,\lambda)=0$, 
we obtain $\lambda=0$, which contradicts $\displaystyle\lim_{\lambda\rightarrow 0}y_{2}(1,\lambda)=1$. If $\sin\beta\neq0$, according to Proposition \ref{2.1} and (8) in Lemma \ref{entire}, we see
\begin{equation*}
\left(\mathrm{e}^{\frac{\sqrt{3}}{2}\sqrt[3]{\lambda}}+\mathrm{e}^{-\frac{\sqrt{3}}{2}\sqrt[3]{\lambda}}\right)\left(\mathrm{e}^{\frac{3}{2}\mathrm{i}\sqrt[3]{\lambda}}+\mathrm{e}^{-\frac{3}{2}\mathrm{i}\sqrt[3]{\lambda}}\right)+\mathrm{e}^{\sqrt{3}\sqrt[3]{\lambda}}+\mathrm{e}^{-\sqrt{3}\sqrt[3]{\lambda}}=6.
\end{equation*}
Using the Taylor expansion of $\mathrm{e}^{x}$ and $\cos x$, a straightforward calculation gives
\begin{equation*}
\left(2\sum_{n=0}^{\infty}\frac{\left(\frac{\sqrt{3}}{2} \sqrt[3]{\lambda}\right)^{2n}}{(2n)!}\right)\left(2\sum_{n=0}^{\infty}\frac{(-1)^n\left(\frac{3}{2} \sqrt[3]{\lambda}\right)^{2n}}{(2n)!}\right)+2\sum_{n=0}^{\infty}\frac{\left(\sqrt{3}\sqrt[3]{\lambda}\right)^{2n}}{(2n)!}=6.
\end{equation*}
From the binomial theorem, we obtain $\lambda=0$. As discussed in the case of (i) in (I), one gets  $y_{2}(1,\lambda)=0$, which contradicts $\displaystyle\lim_{\lambda\rightarrow 0}y_{2}(1,\lambda)=1$.

(II) For $\cos\gamma=0$. We compute $c_{3}=0$, then the coefficient matrix for $c_{1}$ and $c_{2}$ is
\begin{equation*}
M_{2}(\lambda):=\left(
\begin{array}{cc}
y_{1}^{\prime}(1,\lambda) & y_{2}^{\prime}(1,\lambda)-\mathrm{e}^{\mathrm{i}\phi}  \\
\cos\beta y_{1}(1,\lambda)-\mathrm{i}\sin\beta y_{1}^{\prime \prime}(1,\lambda) & \cos\beta y_{2}(1,\lambda)-\mathrm{i}\sin\beta y_{2}^{\prime \prime}(1,\lambda)%
\end{array}%
\right).
\end{equation*}
Similarly, according to (1) in Lemma \ref{entire}, a simple manipulation gives
\begin{equation*}
\left\{
\begin{array}{lr}
0=-\mathrm{i}\lambda y_{3}(1,\lambda), & \\
0=y_{1}(1,\lambda)-\mathrm{e}^{\mathrm{i}\phi}, & \\
0=\cos\beta y_{1}(1,\lambda)-\lambda\sin\beta  y_{2}(1,\lambda), & \\
0=\cos\beta y_{2}(1,\lambda)-\lambda \sin\beta y_{3}(1,\lambda). &
\end{array}
\right.
\end{equation*}
It yields
\begin{equation*}
-\mathrm{i}\lambda y_{3}(1,\lambda)=0.
\end{equation*}

(i) If $\lambda=0$, the non-trivial solution of equation $\mathrm{i}y^{\prime\prime\prime}(x)=\lambda y(x)$ is $y=ax^2+bx+c$, where $a, b, c \in \mathbb{C}$. Considering the boundary conditions (\ref{beta}) and $\cos\gamma=0$, we get a matrix of coefficients for $b$ and $c$, which is
\begin{equation*}
M_{3}(\lambda):=\left(
\begin{array}{cc}
1-\mathrm{e}^{\mathrm{i}\phi} & 0  \\
\cos\beta & \cos\beta %
\end{array}%
\right).
\end{equation*}
From the relation between the rank of the matrix and the multiplicity of eigenvalues, we obtain that $0$ is the eigenvalue of the operator $\widetilde{L}_{0}$ and has multiplicity $2$ if and only if $\cos\beta=0$, $\mathrm{e}^{\mathrm{i}\phi}-1=0$. If these two conditions are not simultaneously true, $0$ may be the simple eigenvalue of the operator $\widetilde{L}_{0}$.

(ii) If $y_{3}(1,\lambda)=0$, we obtain
\begin{equation*}
\cos\beta y_{2}(1,\lambda)=0.
\end{equation*}

\large{\textcircled{\small{i}}} If $y_{2}(1,\lambda)=0$, due to $y_{3}(1,\lambda)=0$, a straightforward calculation gives that $\lambda=0$, which contradicts $\displaystyle\lim_{\lambda\rightarrow 0}y_{2}(1,\lambda)=1$.

\large{\textcircled{\small{ii}}} If $\cos\beta=0$, then $\sin\beta\neq0$. One gets
\begin{equation*}
-\lambda y_{2}(1,\lambda)=0.
\end{equation*}
If $\lambda=0$, then the discussion is the same as (i) in (II), we get that $0$ is the eigenvalue of the operator $\widetilde{L}_{0}$ and has multiplicity $2$ if and only if $\cos\beta=0$, $\mathrm{e}^{\mathrm{i}\phi}-1=0$. 
If these two conditions are not simultaneously true, $0$ may be the simple eigenvalue of the operator $\widetilde{L}_{0}$. The case of $y_{2}(1,\lambda)=0$ has already been discussed in (ii) of (II).

In conclusion, for $\cos^2\gamma+\cos^2\beta+(\mathrm{e}^{\mathrm{i}\phi}-1)^2\neq0$, we have (1), i.e., the operator $\widetilde{L}_{0}$ only has simple eigenvalues. For $\cos^2\gamma+\cos^2\beta+(\mathrm{e}^{\mathrm{i}\phi}-1)^2=0$, one gets (2), that is, the operator $\widetilde{L}_{0}$ has simple eigenvalues, except for the eigenvalue $0$ which has multiplicity $2$.
\end{proof}

$\mathbf{Declarations}$\\

$\mathbf{Data}$  $\mathbf{Availability}$ This article has no associated data.

$\mathbf{Conflict}$ $\mathbf{of}$ $\mathbf{interest}$  The authors have no conflicts to disclose.\\

School of Science, Civil Aviation University of China, Tianjin, 300300, People’s Republic of China.\

E-mail address: $yx\_liu@cauc.edu.cn$ $15225635093@163.com$ 
\end{document}